\DeclareMathOperator\supp{supp}
\numberwithin{equation}{section}
\theoremstyle{plain}
\newtheorem{theorem}{Theorem}[section]
\newtheorem{corollary}[theorem]{Corollary}
\newtheorem{lemma}[theorem]{Lemma}
\newtheorem{question}[theorem]{Question}
\newtheorem{definition}[theorem]{Definition}
\newtheorem{remark}[theorem]{Remark}
\newcommand{\wV}{\wt{V}}
\newcommand{\be}{\mathbb E}
\newcommand{\bn}{\mathbb N}
\newcommand{\Nk}{\bn_0^k}
\newcommand{\ot}{\otimes}
\newcommand {\id} {{\textrm{id}}}
\newcommand{\wt}{\widetilde}
\newcommand{\wT}{\wt{T}}
\newtheorem{thm}{Theorem}[section]
\theoremstyle{defn}
\theoremstyle{rem}
\numberwithin{equation}{section}
\date{\today}
\author[Rohilla]{Azad Rohilla \textsuperscript{*}}
\address{Centre for Mathematical and Financial Computing, Department of Mathematics, The LNM Institute of Information Technology, Rupa ki Nangal, Post-Sumel, Via-Jamdoli
	Jaipur-302031,
	(Rajasthan) INDIA}
\email{18pmt005@lnmiit.ac.in}
\author[Trivedi]{Harsh Trivedi}
\address{Centre for Mathematical and Financial Computing, Department of Mathematics, The LNM Institute of Information Technology, Rupa ki Nangal, Post-Sumel, Via-Jamdoli
	Jaipur-302031,
	(Rajasthan) INDIA}
\email{harsh.trivedi@lnmiit.ac.in, trivediharsh26@gmail.com}
\author[Veerabathiran]{Shankar Veerabathiran }
\address{Indian Statistical Institute, Statistics and Mathematics Unit, 8th Mile, Mysore Road,
	Bangalore, 560059, India}
\email{shankarunom@gmail.com}
\thanks{*corresponding author}
\begin{document}
	
	\title[Beurling quotient subspaces for  covariant representations of product systems]
	{Beurling quotient subspace for  covariant representations of product systems}

	\date{\today}

\begin{abstract}Let $(\sigma, V^{(1)}, \dots, V^{(k)})$ be a  pure doubly commuting isometric representation  of  $\mathbb{E}$ on a Hilbert space $\mathcal{H}_{V}.$ A   $\sigma$-invariant subspace $\mathcal{K}$  is said to be {\it Beurling quotient subspace} of $\mathcal{H}_{V}$ if there exist a Hilbert space $\mathcal{H}_W,$ a pure doubly commuting isometric representation  $(\pi, W^{(1)}, \dots, W^{(k)})$ of $\mathbb{E}$ on  $\mathcal{H}_W$ and an isometric multi-analytic operator $M_\Theta:{\mathcal{H}_W} \to \mathcal{H}_{V}$ such that
	\begin{equation*}
		\mathcal{K}=\mathcal{H}_{V}\ominus M_{\Theta}\mathcal{H}_W,
	\end{equation*} where  $\Theta: \mathcal{W}_{\mathcal{H}_W} \to \mathcal{H}_{V} $ is an inner operator and $\mathcal{W}_{\mathcal{H}_W}$ is the generating wandering subspace for $(\pi, W^{(1)}, \dots, W^{(k)}).$
	In this article, we prove the following characterization of the  Beurling quotient subspaces:  A  subspace $\mathcal{K}$ of $\mathcal{ H}_{V}$ is a Beurling quotient subspace  if and only if \begin{align*}&
		(I_{E_{j}}\ot ( (I_{E_{i}}\ot P_{\mathcal{K}})- \wT^{(i) *}\wT^{(i)}))(t_{i,j} \ot I_{\mathcal{H}_{V}})\\&\:\:\:\:\:(I_{E_{i}}\ot ( (I_{E_{j}}\ot P_{\mathcal{K}})- \wT^{(j) *}\wT^{(j)}))=0,\end{align*} where $\widetilde{T}^{(i)}:=P_{\mathcal{K}}\widetilde{V}^{(i)}(I_{E_{i}} \ot P_{\mathcal{K}})$ and $ 1 \leq i,j\leq k.$  As a consequence,  we derive a concrete regular dilation theorem for a pure, completely contractive covariant representation
	$(\sigma, V^{(1)}, \dots, V^{(k)})$ of  $\mathbb{E}$ on a Hilbert space $\mathcal{H}_{V}$ which satisfies Brehmer-Solel condition and using it and the above characterization, we provide a necessary and sufficient condition that when a completely contractive covariant representation is unitarily equivalent to the compression of the induced representation on the Beurling quotient subspace. Further, we study the relation between Sz.Nagy-Foias type factorization of isometric multi-analytic operators and joint invariant subspaces of the compression of the induced representation on the Beurling quotient subspace.
\end{abstract}

%%% ----------------------------------------------------------------------
\maketitle
%%% ----------------------------------------------------------------------
%\tableofcontents
\section{Introduction}
A well known application of Wold decomposition \cite{W} is Beurling's theorem \cite{BA} which is a characterization of the shift invariant subspaces of the Hardy space $H^2(\mathbb D).$ 
Considering doubly commuting isometries, S{\l}oci{\'n}ski
in \cite{Sl80} gave a Wold-type decomposition. In \cite{R74}, Rudin explained that the Beurling theorem fails in general in the multivariable case, that is, for the shift invariant subspaces of the Hardy space on the polydisc. However, assuming $(M_{z_1}, M_{z_2})$ is doubly commuting, Mandrekar \cite{M88} gave a  Beurling's type theorem for the Hardy space over the bidisc $H^2(\mathbb D^2)$ with help of the S{\l}oci{\'n}ski's decomposition. The case of polydisc $H^2(\mathbb D^n),$ was discussed by Sarkar, Sasane and Wick \cite{SSW}. One of the important problem in multivariable operator theory is to find concrete structure of quotient modules (backward shift invariant subspaces) of function spaces, see for example \cite{BC17,CG3,D89,G009,YR19,YZ19}. A definite application of these multivariable Beurling type theorems in the doubly commuting setting is considered recently by	Bhattacharjee, Das, Debnath and  Sarkar in \cite{BDDS}. Indeed, in \cite{BDDS}, an application to the model space theory in terms of regular dilation is discussed based on a characterization of a Beurling quotient module of $H^2(\mathbb D^n).$

Isometric covariant representations of $C^*$-correspondences play an important role in the theory of Cuntz-Pimsner algebras, see \cite{P97}. 
In \cite{MS99}, Muhly and Solel proved Wold decomposition in the case of the isometric covariant representations which is based on Popescu's Wold decomposition \cite{PoB} in the setting of a row
isometry. Tensor product system (see \cite{A89,F02}) is utilized by Solel in \cite{S008} where the notion of doubly commuting covariant representations of the product systems of $C^*$-correspondences is considered to study  regular dilations.
Skalski and Zacharias proved Wold-type decomposition for the doubly commuting isometric covariant representations.  In this direction, Trivedi and  Veerabathiran \cite{HV21}  provided a Beurling–Lax-type theorem for the pure doubly commuting isometric covariant representations of product systems of $C^*$-correspondences which is a generalization  of Beurling–Lax-type theorem due to Popescu \cite{PoB}, Mandrekar \cite{M88}, and Sarkar, Sasane and Wick \cite{SSW}. Along this direction and on the theme of \cite{BDDS}, in this article, we characterize doubly commuting Beurling quotient subspaces and study its application to Sz.Nagy-Foias model space theory in this setting based on regular dilation due to  Solel \cite{S008}  and 	Skalski \cite{S009}.
\subsection{Preliminaries and basic results}
Assume
$\be$ to be a family of $C^*$-correspondences $\{E_1, \ldots, E_k\},$ over a $C^*$-algebra $\mathcal{B}$ along with the unitary
isomorphisms $t_{i,j}: E_i \ot E_j \to E_j \ot E_i$ ($i>j$) where $ 1\leq j<i\leq k $ and  $k\in \mathbb N.$ With the help of these unitary isomorphisms, we may identify $C^*$-correspondence  $\be ({\bf n})$ as $E_1^{\ot^{ n_1}} \ot \cdots \ot E_k^{\ot^{n_k}},$ for all
${\bf n}=(n_1, \cdots, n_k) \in \Nk $ $ (\mathbb{N}_{0}=\mathbb{N}\cup\{0\}).$  Define maps $t_{i,i} := \id_{E_i \ot E_i}$ and $t_{i,j} := t_{j,i}^{-1}$ when $i<j.$ 
In this case, we say that $\be$ is a {\it product system over $\Nk$} (cf. \cite{F02}). 

A {\it completely bounded, covariant representation} (CB-representation) (cf. \cite{S008}) of $\be$ on a
Hilbert space $\mathcal{H}_{V}$ is defined as a tuple $ V:=(\sigma, V^{(1)}, \ldots, V^{(k)})$, where $\sigma$ is a
representation of  $\mathcal{B}$ on $B(\mathcal{ H}_{V}),$ and  $V^{(i)}:E_i \to B(\mathcal{H}_{V})$ are  completely bounded linear  maps satisfying
\[ V^{(i)}(a \xi_i b) = \sigma(a) V^{(i)}(\xi_i) \sigma(b), \;\;\; a,b \in \mathcal B, \xi_i \in E_i,\]
and satisfying the commutation relation
\begin{equation} \label{rep} \wV^{(i)} (I_{E_i} \ot \wV^{(j)}) = \wV^{(j)} (I_{E_j} \ot \wV^{(i)}) (t_{i,j} \ot I_{\mathcal {H}_{V}})\end{equation}
where $1\leq i,j\leq k.$ We shall use notations $\widetilde{V}^{(i)}_l: E_i^{\ot l}\otimes \mathcal {H}_{V}\to \mathcal{ H}_{V}$ defined by
\[ \widetilde{V}^{(i)}_l (\xi_1 \ot \cdots \ot \xi_l \ot h) := V^{(i)} (\xi_1) \cdots V^{(i)}(\xi_l) h\]
where $\xi_1, \ldots, \xi_l \in E_{i}, h \in \mathcal{ H}_{V}$.
%(above we write $\wT^{(i)}$ for $\wt{T_i}$).

We say that two such a completely bounded covariant representations $(\sigma, V^{(1)}, \ldots, V^{(k)})$ and $(\mu, T^{(1)}, \ldots, T^{(k)})$ of the product system $\be$ over $\mathbb{N}^k_0$,
respectively on Hilbert spaces $\mathcal {H}_{V}$ and $\mathcal {H}_{T}$, are {\it isomorphic}  (cf. \cite{SZ08})  if we have a unitary $U:\mathcal {H}_{V} \to
\mathcal {H}_{T}$ which gives the unitary equivalence of representations $\sigma$ and $\psi,$ and also for each $1\leq i \leq k$, $\xi \in E_i$ one has $T^{(i)}(\xi) = U V^{(i)} (\xi) U^*$.

The concept of doubly commuting isometric representation that follows is a generalization of both non-commuting and doubly $\Lambda$-commuting row isometries (see \cite {GP20, MP,BB02,JPS05,Web}).
\begin{definition}  \label{dcom}
	A  CB-representation $(\sigma, V^{(1)}, \ldots,$ $ V^{(k)})$ of  $\be$ on  $\mathcal{H}_{V}$ is said to be {\rm doubly
		commuting isometric representation} (DCI-representation)  (cf. \cite{S008}) if for each distinct $i,j \in \{1,\ldots,k\}$ we have
	\begin{equation*}\label{doubly}\wV^{(j)^*} \wV^{(i)} =
		(I_{E_j} \ot \wV^{(i)})  (t_{i,j} \ot I_{\mathcal{ H}_{V}})  (I_{E_i} \ot \wV^{(j)^*}),
	\end{equation*} and each $\widetilde{V}^{(i)}$ is an isometry. 
\end{definition}
The {\rm Fock module} of  $\mathbb{E},$ is given by
$$\mathcal{F}(\mathbb{E}):=\bigoplus_{\mathbf{n} \in \mathbb{N}^k_0}\mathbb{E}(\mathbf{n}),$$ where the natural left action $\phi_{\infty}$  is given as $\phi_{\infty}(a)(\oplus_{\mathbf{n} \in \mathbb{N}_0^k} \xi_{\mathbf{n}})=\oplus_{\mathbf{n} \in \mathbb{N}_0^k} a \cdot \xi_{\mathbf{n}},\: \xi_{\mathbf{n}} \in \mathbb{E}(\mathbf{n}), a \in \mathcal{B}.$
Define for $\mathbf{m}=(m_1, \cdots, m_k) \in \mathbb{N}_0^k $, the map $V_{\mathbf{m}}: \mathbb{E}(\mathbf{m}) \to B(\mathcal{H}_{V})$  by $V_{\mathbf{m}}(\xi_{\mathbf{m}})h:=\widetilde{V}_{\mathbf{m}}(\xi_{\mathbf{m}} \otimes h), ~ \xi_{\mathbf{m}} \in \mathbb{E}(\mathbf{m}), h \in \mathcal{H}_{V},$ where $\widetilde{V}_{\mathbf{m}}:\mathbb{E}(\mathbf{m})\otimes_{\sigma} \mathcal{H}_{V} \to \mathcal{H}_{V}$ is defined by
$$\widetilde{V}_{\mathbf{m}}=\widetilde{V}^{(1)}_{m_1}\left(I_{E_1^{\otimes m_1}} \otimes\widetilde{V}^{(2)}_{m_2}\right) \cdots \left(I_{E_1^{\otimes m_1} \otimes \cdots \otimes E_{k-1}^{\otimes {m_{k-1}}}} \otimes\widetilde{V}^{(k)}_{m_k}\right).$$ 

Now we recall the definition of invariant subspace (cf. \cite{SZ08}) for the covariant representation $(\sigma, V^{(1)}, \dots, V^{(k)})$ of $\mathbb{E}$ on  $\mathcal{H}_{V}.$ Throughout this article, we use the notation $I_{k}$ for $\{1,2,\dots, k\},$ where $ k \in \mathbb{N}.$
\begin{definition}	 
	\begin{itemize}
		\item[$(1)$] Consider a CB-representation $(\sigma, V^{(1)}, \dots, V^{(k)})$ of $\mathbb{E}$ on  $\mathcal{H}_{V}$ and suppose  $\mathcal K$ is a closed subspace of $ \mathcal{ H}_{V}.$  Then we say $\mathcal K$ is  $(\sigma, V^{(1)}, \dots, V^{(k)})$-{\rm invariant}   if it  is $\sigma(\mathcal B)$-invariant
		%(i.e., the projection onto $\mathcal K$, will be denoted throughout by $P_{\mathcal K}$, lies  in
		%$\sigma(\mathcal B)')$,
		and, is invariant by each operator $V^{(i)}(\xi_{i}),\: \xi_{i} \in E_{i}, i \in I_{k}.$  In addition, if  $ \mathcal{K}^{\bot}$ is   invariant by $V^{(i)}(\xi_{i})$ for $\xi_{i} \in E_{i}, i \in I_{k}$ then we say $\mathcal{K}$ is $(\sigma, V^{(1)}, \dots, V^{(k)})$-{\em reducing}.  Restricting naturally  this representation we get  another representation of $E$ on $\mathcal{K}$ which will be denoted as $(\sigma, V^{(1)}, \dots, V^{(k)})|_{\mathcal{K}}.$ 
		
		\item[$(2)$]  A closed  subspace $\mathcal{W}$ of $\mathcal{H}_{V}$ is called {\rm wandering} subspace  for\\ $(\sigma, V^{(1)}, \dots, V^{(k)})$, if it is $\sigma(\mathcal{B})$-invariant and $\mathcal{W}\perp\wt{V}_{\mathbf n}(\mathbb{E}({\mathbf n} )\ot \mathcal{W})$ for every $n \in \mathbb{N}_{0}^{k}.$ The representation $(\sigma, V^{(1)}, \dots, V^{(k)})$ has {\rm generating wandering subspace property} (GWS-property) if there is a wandering subspace $\mathcal{W}$ of $\mathcal{H}_{V}$ satisfying
		$$\mathcal{H}_{V}=\displaystyle\bigvee_{\mathbf n\in \mathbb{N}_0^{k}}\wt{V}_{\mathbf n}(\mathbb{E}({\mathbf  n}) \ot \mathcal{W})$$
		and the corresponding  wandering subspace $\mathcal{W}$
		is called generating wandering subspace (GWS). 
	\end{itemize}
	
\end{definition}

Consider  pure DCI-representations $(\sigma, V^{(1)}, \dots, V^{(k)})$ and $(\mu, T^{(1)}, \dots, T^{(k)})$ of $\mathbb{E}$ on the Hilbert spaces $\mathcal{H}_{V}$ and $\mathcal{H}_{T},$ respectively. Then using \cite[Corollary 3.5]{HV21}, we get $$\mathcal{H}_{V}=\bigoplus_{\mathbf{n} \in \mathbb{N}_0^k}\wt{V}_{\mathbf{n}}(\mathbb{E}(\mathbf{n}) \otimes \mathcal{W}_{\mathcal{H}_{V}} ) \:\: \mbox{and} \:\: \mathcal{H}_{T}=\bigoplus_{\mathbf{n} \in \mathbb{N}_0^k}\wt{T}_{\mathbf{n}}(\mathbb{E}(\mathbf{n}) \otimes \mathcal{W}_{\mathcal{H}_{T}} ),$$ where $\mathcal{W}_{\mathcal{H}_{V}}$ and $\mathcal{W}_{\mathcal{H}_{T}}$ are the generating wandering subspaces for $(\sigma, V^{(1)}, \dots, V^{(k)})$ and $(\mu, T^{(1)}, \dots, T^{(k)}),$ respectively.

A bounded operator $A: \mathcal{H}_{V}\to \mathcal{H}_{T}$ is called {\it multi-analytic} (cf. \cite{HV21}) if it satisfies 
\begin{align*}
	AV^{(i)}(\xi_i)h=T^{(i)}(\xi_i)Ah \hspace{0.7cm}\mbox{and} \hspace{0.7cm}A \sigma(a)h= \mu(a)Ah,
\end{align*}
where $\xi_i \in E_i, \: h \in \mathcal{H}_{V},$ $ a \in \mathcal{B}$ and $ i \in I_{k}.$ Each such $A$ is uniquely determined by  $\Theta: \mathcal{W}_{\mathcal{H}_{V}} \to \mathcal{H}_{T}$ satisfying  $\Theta \sigma(a)h=\mu(a)\Theta h, h \in \mathcal{W}_{\mathcal{H}_{V}},$ where $ \Theta= A|_{\mathcal{W}_{\mathcal{H}_{V}}}.$ This follows  because for every $\xi_{\mathbf{n}} \in \mathbb{E}(\mathbf{n}), h \in \mathcal{W}_{\mathcal{H}_{V}}$ we have $AV_{\mathbf{n}}(\xi_{\mathbf{n}})h=T_{\mathbf{n}}(\xi_{\mathbf{n}})\Theta h$ and $\mathcal{H}_{V}=\bigoplus_{\mathbf{n} \in \mathbb{N}_0^k}\wt{V}_{\mathbf{n}}(\mathbb{E}(\mathbf{n}) \otimes \mathcal{W}_{\mathcal{H}_{V}} ).$ Conversely, if we start with $\Theta: \mathcal{W}_{\mathcal{H}_{V}} \to \mathcal{H}_{T}\\\left( = \bigoplus_{\mathbf{n} \in \mathbb{N}_0^k}\wt{T}_{\mathbf{n}}(\mathbb{E}(\mathbf{n}) \otimes \mathcal{W}_{\mathcal{H}_{T}} ) \right)$ which  satisfies $\Theta \sigma(a)h=\mu(a)\Theta h, h \in \mathcal{W}_{\mathcal{H}_{V}},$ then we get $M_{\Theta}: \mathcal{H}_{V} \to \mathcal{H}_{T}$ defined by the equation $$M_{\Theta}V_{\mathbf{n}}(\xi_{\mathbf{n}})h=T_{\mathbf{n}}(\xi_{\mathbf{n}})\Theta h=T_{\mathbf{n}}(\xi_{\mathbf{n}})M_{\Theta} h, \:\:\: \xi_{\mathbf{n}} \in \mathbb{E}(\mathbf{n}), h \in \mathcal{W}_{\mathcal{H}_{V}},$$ which is multi-analytic.
We shall always consider $\Theta$ such that $M_{\Theta}$ is a contraction. It is easy to check that
$$M_{\Theta}\left(\bigoplus_{\mathbf{n} \in \mathbb{N}^k_0}h_{\mathbf{n}} \right)=\sum_{\mathbf{n} \in \mathbb{N}^k_0} \wt{T}_{\mathbf{n}}(I_{\mathbb{E}(\mathbf{n})} \otimes \Theta)\wt{V}_{\mathbf{n}}^*h_{\mathbf{n}} \hspace{1cm} \mbox{for}\: \bigoplus_{\mathbf{n} \in \mathbb{N}^k_0}h_{\mathbf{n}} \in \mathcal{H}_{V}.$$

The map $\Theta,$ as defined above, will be called (cf. \cite[Proposition 4.4]{HV21}) 
\begin{enumerate}
	\item[$(1)$]{\it outer} if $\overline{M_{\Theta}\mathcal{H}_{V}}=\mathcal{H}_{T}$ (equivalently, $\Theta \mathcal{W}_{\mathcal{H}_{V}} $ is cyclic for  $T,$ i.e., $$\bigvee_{\mathbf{n} \in \mathbb{N}^k_0}\wt{T}_{\mathbf{n}}(\mathbb{E}(\mathbf{n}) \otimes  \Theta \mathcal{W}_{\mathcal{H}_{V}} )=\mathcal{H}_{T}),$$ 
	\item[$(2)$] {\it inner} if $M_{\Theta}$ is an isometry, (equivalently, $\Theta \mathcal{W}_{\mathcal{H}_{V}} $ is a wandering subspace for $T$  and $\Theta$ is an isometry).
\end{enumerate}
Indeed, it follows that $\Theta$ is inner and  outer if and only if $\Theta$ is a unitary operator from $ \mathcal{W}_{\mathcal{H}_{V}}$ to $\mathcal{W}_{\mathcal{H}_{T}}.$

\begin{definition}
	Let $(\sigma, V^{(1)}, \dots, V^{(k)})$  be a pure isometric  representation of  $\mathbb{E}$ on $\mathcal{H}_{V}$  and $\mathcal{K}$ be a $\sigma$-invariant subspace (IS) of $\mathcal{H}_{V}$, i.e., $\sigma (b)\mathcal{ K}\subseteq\mathcal{ K}$ for each $b \in \mathcal{B}.$ Then  $\mathcal{K}$ is said to be  {\rm quotient subspace} (QS) of $\mathcal{H}_{V}$ if for each $ i\in I_{k}$, $ 
	\widetilde{V}^{(i)*}\mathcal{K}\subseteq E_{i}\ot \mathcal{K}.
	$\end{definition}
\begin{remark}
	Note that $\mathcal{K}$ is $(\sigma, V^{(1)}, \dots, V^{(k)})$-IS  of $\mathcal{H}_{V}$  if and only if  $\mathcal{K}^{\perp}$  is a QS of $\mathcal{H}_{V}. $
\end{remark}
The following  Beurling-Lax-Halmos type theorem \cite[Theorem 4.4]{HV21} provides a characterization of  QS $\mathcal{ K}$ of $\mathcal{H} _V$, which is a generalization of Popescu's version of Beurling-Lax theorem \cite[Theorem 2.2]{G89}. 
\begin{theorem}\label{hs} Let $(\sigma, V)$ be a pure isometric representation of  ${E}$ on $\mathcal{H}_{V}$ and let $\mathcal{K}$ be a  closed subspace of $\mathcal{ H}_{V}$. Then $\mathcal{K}$ is a QS of $\mathcal{ H}_{V}$ if and only if there exist a Hilbert space $\mathcal{H}_{T},$ a  pure isometric representation $(\mu, T)$ of $E$ on $\mathcal{H}_{T}$ and an isometric multi-analytic operator $M_\Theta: \mathcal{H}_{T} \to \mathcal{H}_{V}$ such that $$\mathcal{K}=\mathcal{H}_{V}\ominus M_{\Theta}\mathcal{H}_{T},$$ where $\Theta :\mathcal{W}_{\mathcal{H}_{T}} \to \mathcal{H}_{V}  $ is inner operator and  $\mathcal{W}_{\mathcal{H}_{T}}$ is the generating wandering subspace for $(\mu, T).$
\end{theorem}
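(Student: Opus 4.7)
The plan is to handle the two implications separately, leaning on the Wold-type decomposition \cite[Corollary 3.5]{HV21} for pure isometric representations as the main structural tool. For sufficiency $(\Leftarrow)$, suppose $\mathcal{K}=\mathcal{H}_V\ominus M_\Theta\mathcal{H}_T$ where $M_\Theta:\mathcal{H}_T\to\mathcal{H}_V$ is isometric and multi-analytic, so that $\sigma(a)M_\Theta=M_\Theta\mu(a)$ and $V^{(i)}(\xi_i)M_\Theta=M_\Theta T^{(i)}(\xi_i)$ for every $a\in\mathcal{B}$ and $\xi_i\in E_i$. Then $M_\Theta\mathcal{H}_T$ is closed (as $M_\Theta$ is an isometry) and $(\sigma,V^{(1)},\ldots,V^{(k)})$-invariant, so by the remark preceding the theorem its orthogonal complement $\mathcal{K}$ is a quotient subspace of $\mathcal{H}_V$.

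For necessity $(\Rightarrow)$, assume $\mathcal{K}$ is a QS and set $\mathcal{M}:=\mathcal{H}_V\ominus\mathcal{K}$, which is invariant by the same remark. The restriction $(\sigma|_\mathcal{M},V|_\mathcal{M})$ is automatically an isometric representation of $E$ on $\mathcal{M}$, and purity transfers since $\|(\wt{V}|_\mathcal{M})_{\mathbf{n}}^{*}h\|\leq\|\wt{V}_{\mathbf{n}}^{*}h\|\to 0$ for every $h\in\mathcal{M}$. Applying \cite[Corollary 3.5]{HV21} yields a generating wandering subspace $\mathcal{W}\subseteq\mathcal{M}$ with
\[\mathcal{M}=\bigoplus_{\mathbf{n}\in\mathbb{N}_0^k}\wt{V}_{\mathbf{n}}(\mathbb{E}(\mathbf{n})\ot\mathcal{W}).\]
I would then take $\mathcal{H}_T:=\mathcal{F}(\mathbb{E})\ot_{\sigma}\mathcal{W}$ with its induced pure isometric representation $(\mu,T)$, whose generating wandering subspace is $\mathbb{E}(\mathbf{0})\ot\mathcal{W}\cong\mathcal{W}$, and define $\Theta:\mathcal{W}_{\mathcal{H}_T}\to\mathcal{H}_V$ as the natural inclusion; it is automatically a $\mathcal{B}$-module map. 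The associated multi-analytic operator then acts by $M_\Theta(\xi_{\mathbf{n}}\ot w)=\wt{V}_{\mathbf{n}}(\xi_{\mathbf{n}}\ot w)$, so the orthogonality of the displayed decomposition forces $M_\Theta$ to be an isometry onto $\mathcal{M}$; hence $\Theta$ is inner and $\mathcal{K}=\mathcal{H}_V\ominus M_\Theta\mathcal{H}_T$.

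The main obstacle will be the transfer step: confirming that $(\sigma|_\mathcal{M},V|_\mathcal{M})$ is genuinely a pure isometric representation so that \cite[Corollary 3.5]{HV21} is applicable, and then identifying the intrinsically produced wandering subspace $\mathcal{W}\subseteq\mathcal{M}$ with the canonical generating wandering subspace $\mathcal{W}_{\mathcal{H}_T}$ of the induced Fock model via the inner operator $\Theta$. Once this identification is pinned down, the isometry of $M_\Theta$, the innerness of $\Theta$, and the range computation $M_\Theta\mathcal{H}_T=\mathcal{M}$ all follow formally from orthogonality of the wandering decomposition together with the defining intertwining identities for multi-analytic operators recorded in the preliminaries.
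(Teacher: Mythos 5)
First, a point of orientation: the paper does not prove Theorem \ref{hs} at all --- it is imported verbatim from \cite[Theorem 4.4]{HV21} --- so there is no internal proof to compare against, and your argument has to be judged on its own terms. Read in the setting the theorem actually concerns, namely a single $C^*$-correspondence $E$ (i.e.\ $k=1$), your outline is correct and is the expected argument. Sufficiency is exactly the remark preceding the theorem (a subspace is a QS iff its orthogonal complement is invariant) combined with the observation that the range of an isometric multi-analytic operator is a closed, $\sigma(\mathcal{B})$- and $V$-invariant subspace. For necessity, $\mathcal{M}=\mathcal{K}^{\perp}$ is invariant, the restriction $(\sigma,V)|_{\mathcal{M}}$ is isometric, and purity does transfer because, by Remark \ref{1} and invariance of $\mathcal{M}$, $(\wt{V}|_{\mathcal{M}})_{n}^{*}=(I_{E^{\ot n}}\ot P_{\mathcal{M}})\wt{V}_{n}^{*}|_{\mathcal{M}}$, so $\|(\wt{V}|_{\mathcal{M}})_{n}^{*}h\|\le\|\wt{V}_{n}^{*}h\|\to 0$; the Wold-type decomposition of \cite{HV21} then gives a generating wandering subspace $\mathcal{W}\subseteq\mathcal{M}$, and the operator determined by $\xi_{n}\ot w\mapsto\wt{V}_{n}(\xi_{n}\ot w)$ on the induced model $\mathcal{F}(E)\ot_{\sigma|_{\mathcal{W}}}\mathcal{W}$ is multi-analytic, isometric (wandering property gives orthogonal ranges) and onto $\mathcal{M}$ (GWS property), so $\Theta$ is inner and $\mathcal{K}=\mathcal{H}_{V}\ominus M_{\Theta}\mathcal{H}_{T}$. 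The delicate points are precisely the two you flag, and both close: purity transfer as above, and the identification $\mathcal{W}_{\mathcal{H}_{T}}=\mathcal{B}\ot\mathcal{W}\cong\mathcal{W}$, which uses nondegeneracy of $\sigma|_{\mathcal{W}}$ and is harmless here. You could also shortcut the Fock model entirely by taking $(\mu,T)=(\sigma,V)|_{\mathcal{M}}$ on $\mathcal{H}_{T}=\mathcal{M}$ and $M_{\Theta}$ the inclusion; the two routes are equivalent via the Wold unitary.

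The caveat, and it matters: your write-up is phrased in product-system notation ($\mathbb{N}_{0}^{k}$, $\mathbb{E}(\mathbf{n})$, ``$(\sigma,V^{(1)},\dots,V^{(k)})$-invariant'') and leans on \cite[Corollary 3.5]{HV21}, which is a statement about \emph{pure doubly commuting} isometric representations. If you intend the same argument for $k\ge 2$, the necessity direction breaks exactly at that citation: the restriction of a DCI-representation to an invariant subspace need not be doubly commuting (that is the content of Theorem \ref{MT5}), so the GWS decomposition is not available for $(\sigma,V)|_{\mathcal{M}}$, and the conclusion is in fact false in general for $k\ge 2$ --- Rudin-type invariant subspaces of $H^{2}(\mathbb{D}^{2})$ yield quotient subspaces with no Beurling representation, which is precisely why the paper needs the cross-commutator condition of Theorem \ref{Beurlin}. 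So your proof is fine for Theorem \ref{hs} as stated (one correspondence), but the multi-index notation should be stripped out; as written, the step ``apply \cite[Corollary 3.5]{HV21} to the restriction'' is unjustified in the multivariable reading and cannot be repaired.
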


Generally, a quotient subspace  $\mathcal{K}$ of $\mathcal{H}_{V}$ for the pure DCI-representation $(\sigma, V^{(1)}, \dots, V^{(k)})$ does not necessarily have the Beurling-type representation \cite{D89, WR, YR19}. In this article, we discuss the Beurling-type representations of quotient subspaces for  a  pure DCI-representation $(\sigma, V^{(1)}, \dots, V^{(k)})$ based on \cite{BDDS}. On the basis \cite[Theorem 4.11]{HV21}, we define a Beurling quotient subspace  for a pure DCI-representation $(\sigma, V^{(1)}, \dots, V^{(k)})$ of  $\mathbb{E}$ on a Hilbert space $\mathcal{H}_{V},$ that is,  a  $\sigma$-IS $\mathcal{K}$  is said to be {\it Beurling quotient subspace} (BQS) of $\mathcal{H}_{V}$ if there exist a Hilbert space $\mathcal{H}_W,$ a pure DCI-representation  $(\pi, W^{(1)}, \dots, W^{(k)})$ of $\mathbb{E}$ on  $\mathcal{H}_W$ and an isometric multi-analytic operator $M_\Theta:{\mathcal{H}_W} \to \mathcal{H}_{V}$ such that
\begin{equation*}
	\mathcal{K}=\mathcal{H}_{V}\ominus M_{\Theta}\mathcal{H}_W,
\end{equation*} where $\mathcal{W}_{\mathcal{H}_W}$ is the generating wandering subspace for $(\pi, W^{(1)}, \dots, W^{(k)}).$ The following natural question arises: 

\begin{question}\label{ques}
	Suppose $(\sigma, V^{(1)}, \dots, V^{(k)})$ is a  pure DCI-representation of  $\mathbb{E}$ on $\mathcal{H}_{V}.$  Which QS of $\mathcal{H}_{V}$ admits Beurling-type representation?
\end{question} 
To answer this question, consider a  pure DCI-representation\\  $(\sigma, V^{(1)}, \dots, V^{(k)})$ of  $\mathbb{E}$ on  $\mathcal{H}_{V}$ and let  $\mathcal{K}$ be a QS of $\mathcal{H}_{V}.$ Define  $$\mu(a):=P_{\mathcal{ K}}\sigma(a){P_{\mathcal{K}}}  \:\mbox{and} \:\:T^{(i)}(\xi_{i}):=P_{\mathcal{K}}V^{(i)}(\xi_{i}){P_{\mathcal{K}}},$$
for $\xi_{i} \in E_{i}$ and $a \in \mathcal{B},$ i.e.,
$\wT^{(i)}:=P_{\mathcal{K}}\widetilde{V}^{(i)}(I_{E_{i}}\ot P_{\mathcal{K}}),$ where $P_{\mathcal{K}}$ is orthogonal projection of $\mathcal{H}_{V}$ on $\mathcal{ K}$,  $i\in I_{k}$  and hence $(\mu,T^{(1)},T^{(2)}, \dots, T^{(k)}) $ is a contractive covariant representation of $\mathbb{E}$ on $\mathcal{K}.$ The following theorem based on \cite[Theorem 1.1]{BDDS} answers  Question \ref{ques} which is the main result of Section \ref{2}.
\begin{theorem}\label{Beurlin}
	Let $(\sigma, V^{(1)}, \dots, V^{(k)})$  be a pure DCI-representation of  $\mathbb{E}$ on  $\mathcal{H}_{V}.$ Let $\mathcal{K}$ be a QS of $\mathcal{H}_{V}$. Then $\mathcal{K}$ is a BQS of $\mathcal{H}_{V}$ if and only if \begin{equation*}
		(I_{E_{j}}\ot ( (I_{E_{i}}\ot P_{\mathcal{K}})- \wT^{(i) *}\wT^{(i)}))(t_{i,j} \ot I_{\mathcal{K}})(I_{E_{i}}\ot ( (I_{E_{j}}\ot P_{\mathcal{K}})- \wT^{(j) *}\wT^{(j)}))=0.
	\end{equation*}
\end{theorem}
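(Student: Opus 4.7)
The plan is to exploit the block decomposition of $\wV^{(i)}$ with respect to the splitting $\mathcal{H}_V = \mathcal{K} \oplus \mathcal{K}^{\perp}$. Since $\mathcal{K}$ is a quotient subspace, $\mathcal{K}^{\perp}$ is $V$-invariant, so setting $\widetilde{Q}^{(i)} := P_{\mathcal{K}^{\perp}} \wV^{(i)} (I_{E_i} \ot P_{\mathcal{K}})$ and $\widetilde{R}^{(i)} := \wV^{(i)} (I_{E_i} \ot P_{\mathcal{K}^{\perp}})$ puts $\wV^{(i)}$ into block form with zero upper-right corner. The isometry of $\wV^{(i)}$ then yields the defect identity
\[
(I_{E_i} \ot P_{\mathcal{K}}) - \wT^{(i)*}\wT^{(i)} = \widetilde{Q}^{(i)*}\widetilde{Q}^{(i)},
\]
so the hypothesis translates into a statement about the $\widetilde{Q}^{(i)}$'s alone.

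Substituting this block decomposition into the doubly commuting relation $\wV^{(j)*}\wV^{(i)} = (I_{E_j} \ot \wV^{(i)})(t_{i,j} \ot I)(I_{E_i} \ot \wV^{(j)*})$ and reading off the compressed block $E_i \ot \mathcal{K}^{\perp} \to E_j \ot \mathcal{K}^{\perp}$, the mixed $\widetilde{Q}\widetilde{R}^{*}$ and $\widetilde{R}\widetilde{Q}^{*}$ terms vanish (each contains a $P_{\mathcal{K}^{\perp}} P_{\mathcal{K}}$ annihilation), leading to the \emph{fundamental identity}
\[
\widetilde{R}^{(j)*}\widetilde{R}^{(i)} = (I_{E_j} \ot \widetilde{R}^{(i)})(t_{i,j} \ot I)(I_{E_i} \ot \widetilde{R}^{(j)*}) + (I_{E_j} \ot \widetilde{Q}^{(i)})(t_{i,j} \ot I)(I_{E_i} \ot \widetilde{Q}^{(j)*}),
\]
which holds purely from the DCI of $V$ and the QS property of $\mathcal{K}$; this is the workhorse for both directions.

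For necessity, the BQS decomposition $\mathcal{K}^{\perp} = M_{\Theta}\mathcal{H}_W$ provides a unitary $M_{\Theta}: \mathcal{H}_W \to \mathcal{K}^{\perp}$ intertwining $(\pi, W)$ with $V|_{\mathcal{K}^{\perp}}$. Since $(\pi, W)$ is DCI, so is $V|_{\mathcal{K}^{\perp}}$; the fundamental identity then forces $(I_{E_j} \ot \widetilde{Q}^{(i)})(t_{i,j} \ot I)(I_{E_i} \ot \widetilde{Q}^{(j)*}) = 0$ on $E_i \ot \mathcal{K}^{\perp}$. Sandwiching this zero on the left by $(I_{E_j} \ot \widetilde{Q}^{(i)*})$ and on the right by $(I_{E_i} \ot \widetilde{Q}^{(j)})$, and rewriting via the defect identity, yields the theorem's condition.

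For sufficiency, the main obstacle is to promote the hypothesis, which only asserts vanishing after being sandwiched by defect factors, to the stronger vanishing $N := (I_{E_j} \ot \widetilde{Q}^{(i)})(t_{i,j} \ot I)(I_{E_i} \ot \widetilde{Q}^{(j)*}) = 0$ on all of $E_i \ot \mathcal{K}^{\perp}$. I will use a two-step argument: the equivalence $A^{*}A X = 0 \Leftrightarrow A X = 0$ reduces the hypothesis to the vanishing of $N(I_{E_i} \ot \widetilde{Q}^{(j)})$, which by continuity then gives $N = 0$ on $E_i \ot \overline{\mathrm{range}(\widetilde{Q}^{(j)})}$; on the orthogonal complement $E_i \ot \ker(\widetilde{Q}^{(j)*}|_{\mathcal{K}^{\perp}})$, $N$ vanishes automatically because $\widetilde{Q}^{(j)*}$ annihilates these vectors. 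Hence $N = 0$ on $E_i \ot \mathcal{K}^{\perp}$, and the fundamental identity forces $V|_{\mathcal{K}^{\perp}}$ to be DCI. Theorem \ref{hs}, applied to the QS $\mathcal{K}$, then supplies a pure isometric $(\pi, W)$ and an inner $\Theta$ with $\mathcal{K}^{\perp} = M_{\Theta}\mathcal{H}_W$; since $V|_{\mathcal{K}^{\perp}}$ is DCI and $M_{\Theta}$ is a unitary intertwiner, $W$ is DCI too, certifying $\mathcal{K}$ as a BQS.
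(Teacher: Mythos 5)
Your proposal is correct in substance, and in the crucial (sufficiency) direction it takes a genuinely different and much more elementary route than the paper. First note that your operator $N=(I_{E_j}\ot \widetilde{Q}^{(i)})(t_{i,j}\ot I)(I_{E_i}\ot \widetilde{Q}^{(j)*})$, with $\widetilde{Q}^{(i)}=P_{\mathcal{K}^{\perp}}\wV^{(i)}(I_{E_i}\ot P_{\mathcal{K}})$, is literally the paper's $X_{i,j}$ of Lemma \ref{Lemma 2.1}, and your ``fundamental identity'' is the computation carried out there: $N=0$ is equivalent to $\mathcal{K}^{\perp}$ being a DCS, hence (via Theorem \ref{MT5}) to $\mathcal{K}$ being a BQS. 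Your necessity argument therefore has the same content as the paper's, just organized through the block decomposition of $\wV^{(i)}$ rather than through $P_{\mathcal{K}^{\perp}}=M_{\Theta}M_{\Theta}^{*}$; both are fine. The divergence is in the converse: the paper promotes the sandwiched hypothesis to $X_{i,j}=0$ by a long argument (Lemma \ref{2.2}, the Douglas-factorization Lemma \ref{lemma2.5}, Lemma \ref{lemma 2.4}, the reducing subspace $\mathcal{L}=\bigvee_{\mathbf n}\wV_{\mathbf n}(\mathbb{E}(\mathbf n)\ot\mathcal{K})$, wandering-subspace decompositions using purity, and a multi-index case analysis), whereas you obtain the same promotion in two short steps: since the hypothesis reads $(I_{E_j}\ot \widetilde{Q}^{(i)*}\widetilde{Q}^{(i)})(t_{i,j}\ot I)(I_{E_i}\ot \widetilde{Q}^{(j)*}\widetilde{Q}^{(j)})=0$, the implication $A^{*}AX=0\Rightarrow AX=0$ gives $N(I_{E_i}\ot \widetilde{Q}^{(j)})=0$, and then $N=0$ because every elementary tensor $\xi\ot h$ splits as $\xi\ot P_{\overline{\operatorname{ran}\widetilde{Q}^{(j)}}}h+\xi\ot h'$ with $\widetilde{Q}^{(j)*}h'=0$, the first summand being a norm limit of vectors in $\operatorname{ran}(I_{E_i}\ot \widetilde{Q}^{(j)})$ (and $N$ kills $E_i\ot\mathcal{K}$ automatically, so this already gives $N=0$ everywhere). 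I checked this argument; it is sound, uses no purity, wandering subspaces or Douglas factorization at this stage, and genuinely simplifies Section \ref{2}. It is best phrased, as above, by decomposing the vector $h$ itself rather than by writing $I_{E_i}\ot P_{\overline{\operatorname{ran}\widetilde{Q}^{(j)}}}$, so no question of well-definedness of such ampliations arises.

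One repair is needed at the very end. Theorem \ref{hs} is the Beurling--Lax--Halmos theorem for a single correspondence $E$; it cannot produce a pure DCI-representation of the product system $\mathbb{E}$ together with a multi-analytic isometry intertwining all $k$ directions, so it does not certify the BQS property. What you need, once $N=X_{i,j}=0$ is established, is Theorem \ref{MT5} (\cite[Theorem 4.11]{HV21}) applied to the $(\sigma,V^{(1)},\dots,V^{(k)})$-invariant subspace $\mathcal{K}^{\perp}$, which is then a DCS; equivalently, invoke Lemma \ref{Lemma 2.1} directly, or take $\mathcal{H}_W=\mathcal{K}^{\perp}$, $W=V|_{\mathcal{K}^{\perp}}$ and $M_{\Theta}$ the inclusion, checking that purity passes to restrictions to invariant subspaces. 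With that citation corrected, the proof is complete.
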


In Section \ref{3}, first, we give a concrete regular dilation theorem for a pure completely contractive covariant representation $(\sigma, V^{(1)}, V^{(2)},\dots, V^{(k)})$ of $\mathbb{E}$ on $\mathcal{H}_{V}$ satisfying Brehmer-Solel condition. For $k=2,$  $(\sigma, V^{(1)}, V^{(2)})$   satisfies {\it Brehmer-Solel condition} \cite{BS,S008} if \begin{equation*}
	\Delta_{*}(V):=I_{\mathcal{H}}-\widetilde{V}^{(1)}\widetilde{V}^{(1)*}-\widetilde{V}^{(2)}\widetilde{V}^{(2)*}+\widetilde{V}^{(1)}(I_{E_1} \ot \widetilde{V}^{(2)}\widetilde{V}^{(2)*})\widetilde{V}^{(1)*}\geq 0.
\end{equation*}Using the  concrete dilation theorem there exists a QS $\mathcal{ K}$ of $\mathcal{F}(\mathbb{E}) \ot \mathcal{ H}$ such that   \begin{equation}\label{imp}
	(\sigma, V^{(1)}, V^{(2)})\cong(\rho|_{\mathcal{K}}, S^{(1)}|_{\mathcal{K}}, S^{(2)}|_{\mathcal{K}}),
\end{equation}  where $(\rho, S^{(1)}, S^{(2)})$ is the induced  representation of $\mathbb{E}$ on $\mathcal{F}(\mathbb{E}) \ot \mathcal{ H}.$ But it is not necessary that $\mathcal{K}$ obtained in this way is a BQS of $\mathcal{F}(\be) \ot \mathcal{H}.$ This question is further explored in  this Section \ref{3} with the help of Theorem \ref{Beurlin}.

Sz. Nagy-Foias \cite[Chapter VII, Theorem 2.3]{NFS} provided a necessary and sufficient for a nontrivial invariant subspace of the completely non-unitary contraction $V$ on a Hilbert space if the characteristic function of $V$ admits a nontrivial factorization. Bercovici \cite[Chapter 5, Proposition 1.21]{B88} extended this result and described the invariant subspaces of a functional model. In Section \ref{4} we prove the following result which is  a generalization of this result. \begin{theorem}
	Let $(\sigma, V)$ and $(\mu, T)$ be pure isometric  representations of $E$ on Hilbert spaces $\mathcal{H}_{V}$ and $\mathcal{H}_{T},$ respectively.  Suppose ${\mathcal{W}_{\mathcal{H}_{V}}}$  and ${\mathcal{W}_{\mathcal{H}_{T}}}$ are the generating wandering subspace for $(\sigma, V)$ and $(\mu,T),$ respectively. Let ${M}_\Theta:\mathcal{H}_{V}\to \mathcal{H}_{T}$ be an isometric multi-analytic operator. Then  the covariant representation $(\pi'_{\Theta}, W'_{\Theta})$ has an IS   if and only if there exists a Hilbert space $\mathcal{H}_{R},$ a pure isometric  representation $(\nu, R)$ of $E$ on $\mathcal{H}_{R}$, and isometric multi-analytic operators ${M}_\Phi: {\mathcal{H}_{R}} \to {\mathcal{H}_{T}}$, ${M}_\Psi: {\mathcal{H}_{V}} \to {\mathcal{H}_{R}}$ such that  \begin{equation*}
		{\Theta}={\Phi}{\Psi}.
	\end{equation*}
\end{theorem}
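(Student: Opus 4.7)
The plan is to translate the statement into one about $(\mu,T)$-invariant subspaces of $\mathcal{H}_{T}$ sitting between $M_\Theta\mathcal{H}_{V}$ and $\mathcal{H}_{T}$, and then feed the intermediate invariant subspace into the Beurling--Lax--Halmos type Theorem~\ref{hs} to extract the factor $\Phi$; the factor $\Psi$ will then arise as a natural compression. I would set $\mathcal{K}:=\mathcal{H}_{T}\ominus M_\Theta\mathcal{H}_{V}$, so that $(\pi'_\Theta,W'_\Theta)$ is the compression of $(\mu,T)$ to $\mathcal{K}$. Because $M_\Theta\mathcal{H}_{V}$ is $(\mu,T)$-invariant, a closed subspace $\mathcal{L}\subseteq\mathcal{K}$ is invariant for $(\pi'_\Theta,W'_\Theta)$ if and only if $\mathcal{M}:=\mathcal{L}\oplus M_\Theta\mathcal{H}_{V}$ is $(\mu,T)$-invariant in $\mathcal{H}_{T}$. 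This bijection is the bridge between the two sides of the theorem.

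For the forward direction, I take such an $\mathcal{M}$. Since $(\mu,T)$ is pure, Theorem~\ref{hs} yields a Hilbert space $\mathcal{H}_{R}$, a pure isometric representation $(\nu,R)$ on $\mathcal{H}_{R}$, and an isometric multi-analytic $M_\Phi:\mathcal{H}_{R}\to\mathcal{H}_{T}$ with $\mathcal{M}=M_\Phi\mathcal{H}_{R}$. I then define $M_\Psi:=M_\Phi^{*} M_\Theta:\mathcal{H}_{V}\to\mathcal{H}_{R}$. The inclusion $M_\Theta\mathcal{H}_{V}\subseteq M_\Phi\mathcal{H}_{R}$ forces $M_\Phi M_\Phi^{*} M_\Theta=M_\Theta$, giving both $M_\Phi M_\Psi=M_\Theta$ and $M_\Psi^{*} M_\Psi=M_\Theta^{*} M_\Theta=I$, so $M_\Psi$ is an isometry. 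To see that $M_\Psi$ is multi-analytic I would combine the intertwining of $M_\Phi$ with the compression identity $M_\Phi^{*} T(\xi) M_\Phi=R(\xi)$ (which follows from $(\mu,T)$-invariance of $M_\Phi\mathcal{H}_{R}$ together with $M_\Phi^{*}M_\Phi=I$): then $M_\Psi V(\xi)=M_\Phi^{*} T(\xi) M_\Theta=M_\Phi^{*} T(\xi) M_\Phi M_\Psi=R(\xi) M_\Psi$, and the $\mathcal{B}$-action is handled in the same fashion. Restricting the identity $M_\Theta=M_\Phi M_\Psi$ to the generating wandering subspace $\mathcal{W}_{\mathcal{H}_{V}}$ delivers the desired factorization $\Theta=\Phi\Psi$.

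The reverse direction is easier: given $\Theta=\Phi\Psi$, I set $\mathcal{M}:=M_\Phi\mathcal{H}_{R}$, which is automatically $(\mu,T)$-invariant and contains $M_\Theta\mathcal{H}_{V}=M_\Phi M_\Psi\mathcal{H}_{V}$, so $\mathcal{L}:=\mathcal{M}\ominus M_\Theta\mathcal{H}_{V}\subseteq\mathcal{K}$ is invariant for $(\pi'_\Theta,W'_\Theta)$ by the correspondence from the first paragraph. The main technical obstacle I anticipate is verifying the intertwining property of $M_\Psi=M_\Phi^{*} M_\Theta$, since $M_\Phi^{*}$ does not by itself intertwine $T$ with $R$ on all of $\mathcal{H}_{T}$; the remedy outlined above is to route the computation through $M_\Phi M_\Psi=M_\Theta$ and use $(\mu,T)$-invariance of $M_\Phi\mathcal{H}_{R}$ to collapse $M_\Phi^{*} T(\xi) M_\Phi$ to $R(\xi)$.
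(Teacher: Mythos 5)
Your proposal is correct and follows essentially the same route as the paper: both directions hinge on the correspondence $\mathcal{S}\mapsto\mathcal{S}\oplus M_\Theta\mathcal{H}_{V}$ between invariant subspaces of the compression and $(\mu,T)$-invariant subspaces containing $M_\Theta\mathcal{H}_{V}$, followed by an application of Theorem \ref{hs} to realize the latter as $M_\Phi\mathcal{H}_{R}$ and a factorization $M_\Theta=M_\Phi M_\Psi$. The only cosmetic difference is that you define $M_\Psi:=M_\Phi^{*}M_\Theta$ explicitly and check the intertwining via $M_\Phi^{*}T(\xi)M_\Phi=R(\xi)$, whereas the paper obtains the same operator from Douglas's range-inclusion theorem and cancels the isometry $M_\Phi$; both verifications are valid.
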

Moreover, if we take $k=2,$ i.e., $(\pi'_{\Theta}, W_{\Theta}^{'(1)}, W_{\Theta}^{'(2)})$-joint IS alone does not guarantee the factorization of an inner operator, and this is a motivation for Section \ref{4}. 
More generally based on \cite[Theorem 4.4]{BDDS}  we prove that if $(\sigma, V^{(1)}, \dots, V^{(k)})$ and $(\mu, T^{(1)}, \dots, T^{(k)})$  be DCI-representations of  $\mathbb{E}$ on the Hilbert spaces $\mathcal{H}_{V}$ and $\mathcal{H}_{T},$ respectively and  ${M}_\Theta:\mathcal{H}_{V}\to \mathcal{H}_{T}$ be an isometric multi-analytic operator, then  $\Theta$ factorize  in terms of inner operators.

%Similarly for $\mathbf{n}=(n_1, \cdots, n_k) \in \mathbb{N}_0^k $, we use notation $\wT_{\mathbf{n}}:\mathbb{E}(\mathbf{n})\otimes \mathcal{H} \to \mathcal{H}$ for
% $$\wT_{\mathbf{n}}:=\wT^{(1)}_{n_1}\left(I_{E_1^{\otimes n_1}} \otimes\wT^{(2)}_{n_2}\right) \cdots \left(I_{E_1^{\otimes n_1} \otimes \cdots \otimes E_{k-1}^{\otimes {n_{k-1}}}} \otimes\wT^{(k)}_{n_k}\right).$$

%Let us define the linear map $T_{\mathbf{n}}: \mathbb{E}(\mathbf{n}) \to B(\mathcal{H})$  (cf. \cite{S08}) by $$T_{\mathbf{n}}(\xi)h:=\wT_{\mathbf{n}}(\xi \otimes h), ~ \xi \in \mathbb{E}(\mathbf{n}), h \in \mathcal{H}.$$
%We use  $I_k$ for $I_{k}.$\section{Beurling Quotient Subspace For The Covariant Representations}\label{2}

\section{Beurling quotient subspace for the covariant representations}\label{2}

In this section, we prove a necessary and sufficient condition regarding when a QS becomes  BQS for DCI-representations of a product system. For this purpose, we need some fundamental lemmas regarding BQS and cross-commutators. First, we recall a Beurling-type characterization for the doubly commuting invariant subspaces for DCI- representations.

Let   $(\sigma, V^{(1)}, \dots, V^{(k)})$   be a covariant representation  of  $\mathbb{E}$ on a Hilbert space  $\mathcal{H}_{V}$ and $\mathcal{K}$ be a $(\sigma, V^{(1)}, \dots, V^{(k)})$-invariant subspace of $\mathcal{H}_{V}.$ The natural restriction of  $(\sigma, V^{(1)}, \dots, V^{(k)})$  to  $\mathcal{K}$  provides a new representation of $\mathbb{E}$ on $\mathcal{K}$ and it will be denoted by $(\sigma, V^{(1)}, \dots, V^{(k)})|_{\mathcal{K}}.$ This means that, for  $i \in I_k$ define $ V|_{\mathcal{K}}^{(i)}: E_i   \rightarrow B(\mathcal{K})$ by $V|_{\mathcal{K}}^{(i)}(\xi_i)h=V^{(i)}(\xi_i)h, \xi_i \in E_i, h \in \mathcal{K},$ and  the restriction map    ${\wt{V}|_{\mathcal{K}}^{(i)}}: E_{i}\ot \mathcal{ K} \rightarrow  \mathcal{K}$ by ${\wt{V}|_{\mathcal{K}}^{(i)}}(\xi_i\ot h)=V^{(i)}(\xi_i \ot h).$ That is,  $(\sigma, V^{(1)}, \dots, V^{(k)})|_{\mathcal{K}}=(\sigma|_{\mathcal{K}}, V|_{\mathcal{K}}^{(1)}, \dots, V|_{\mathcal{K}}^{(k)}).$

%  ${\wt{V}|_{\mathcal{K}}^{(i)}}$ is the restriction map to $E_{i}\ot \mathcal{ K},$ that is, ${\wt{V}|_{\mathcal{K}}^{(i)}}:{E_{i}\ot \mathcal{ K}} \to \mathcal{ K}$ by ${\wt{V}|_{\mathcal{K}}^{(i)}}(\xi_i\ot h)=V^{(i)}(\xi_i \ot h).$ 
\begin{thm}\cite[Theorem 4.11]{HV21}\label{MT5}
	Let $(\sigma, V^{(1)}, \dots, V^{(k)})$  be a pure DCI-representation of  $\mathbb{E}$ on the Hilbert space  $\mathcal{H}_{V}.$  Let $\mathcal{K}$ be a  $(\sigma, V^{(1)}, \dots, V^{(k)})$-IS of $\mathcal{H}_{V}$. Then  $\mathcal{K}$ is a $(\sigma, V^{(1)}, \dots, V^{(k)})$-{\rm doubly commuting subspace (DCS)}, that is,
	$${\wt{V}|_{\mathcal{K}}^{(i)}}^{*}{\wt{V}|_{\mathcal{K}}^{(j)}}=(I_{E_i} \otimes {\wt{V}|_{\mathcal{K}}^{(j)}})(t_{j,i} \otimes I_{\mathcal{K}})(I_{E_j} \otimes{\wt{V}|_{\mathcal{K}}^{(i)}}^{*}),$$  where $i,j \in I_k$ with $ i \neq j,$  if and only if there exist a Hilbert space $\mathcal{H}_{T}$, a pure DCI-representation $(\mu, T^{(1)}, \dots, T^{(k)})$ of $\mathbb{E}$ on  $\mathcal{H}_T$ and an isometric multi-analytic operator $M_\Theta:{\mathcal{H}_T} \to \mathcal{H}_{V}$ such that
	$$\mathcal{K}=M_{\Theta}\mathcal{H}_T.$$ $\mathcal{ K}$ is also called as {\it Beurling subspace}. Indeed if $\mathcal{K}=\mathcal{H}_{V},$ then $\Theta$ is outer.
\end{thm}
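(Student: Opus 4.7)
The plan is to handle the two implications separately. The ``if'' direction amounts to transporting the doubly commuting structure through an intertwining isometry, while the ``only if'' direction requires recognizing the restriction to $\mathcal{K}$ as itself a pure DCI-representation and then invoking a Wold-type decomposition.

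For the ``if'' direction, suppose $\mathcal{K}=M_\Theta\mathcal{H}_T$ with $(\mu,T^{(1)},\dots,T^{(k)})$ a pure DCI-representation and $M_\Theta$ isometric multi-analytic. The intertwining relations $M_\Theta\mu(a)=\sigma(a)M_\Theta$ and $M_\Theta T^{(i)}(\xi_i)=V^{(i)}(\xi_i)M_\Theta$ force $\mathcal{K}$ to be $(\sigma,V)$-invariant. Since $M_\Theta$ is an isometric bijection onto $\mathcal{K}$, the unitary $U:=M_\Theta:\mathcal{H}_T\to\mathcal{K}$ conjugates $\wT^{(i)}$ to $\wt{V}|_{\mathcal{K}}^{(i)}$; conjugating the doubly commuting identity for $T$ by $I_{E_i}\ot U$ (and the related tensors) then yields the analogous identity for $\wt{V}|_{\mathcal{K}}^{(i)}$, so $\mathcal{K}$ is a DCS.

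For the ``only if'' direction, the first task is to show $(\sigma|_{\mathcal{K}},V|_{\mathcal{K}}^{(1)},\dots,V|_{\mathcal{K}}^{(k)})$ is itself a pure DCI-representation of $\mathbb{E}$ on $\mathcal{K}$. The doubly commuting relation is the hypothesis; isometricity of each $\wt{V}|_{\mathcal{K}}^{(i)}=\wV^{(i)}|_{E_i\ot\mathcal{K}}$ is inherited from that of $\wV^{(i)}$. Purity is the delicate point, because restriction does not commute with adjoints: one must check that the $n$-fold iterate $\wt{V}|_{\mathcal{K},n}^{(i)}$ satisfies $\wt{V}|_{\mathcal{K},n}^{(i)}\wt{V}|_{\mathcal{K},n}^{(i)*}\to 0$ strongly on $\mathcal{K}$. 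Since the range of $\wt{V}|_{\mathcal{K},n}^{(i)}$ is contained in that of $\wt{V}_n^{(i)}$, the pointwise bound $\|\wt{V}|_{\mathcal{K},n}^{(i)*}h\|^{2}\leq\|\wt{V}_n^{(i)*}h\|^{2}$ for $h\in\mathcal{K}$ transfers purity of each component from $V$ to $V|_{\mathcal{K}}$. Applying \cite[Corollary~3.5]{HV21} inside $\mathcal{K}$ then produces a generating wandering subspace $\mathcal{W}_{\mathcal{K}}\subseteq\mathcal{K}$ with $\mathcal{K}=\bigoplus_{\mathbf{n}\in\Nk}\wt{V}|_{\mathcal{K},\mathbf{n}}(\mathbb{E}(\mathbf{n})\ot\mathcal{W}_{\mathcal{K}})$.

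With this in hand, one takes $\mathcal{H}_T:=\mathcal{K}$, $(\mu,T^{(i)}):=(\sigma|_{\mathcal{K}},V|_{\mathcal{K}}^{(i)})$, and lets $\Theta:\mathcal{W}_{\mathcal{K}}\hookrightarrow\mathcal{H}_V$ be the inclusion of the wandering subspace. The corresponding multi-analytic operator $M_\Theta$, determined by $M_\Theta T_{\mathbf{n}}(\xi_{\mathbf{n}})h=V_{\mathbf{n}}(\xi_{\mathbf{n}})\Theta h$ on the Wold summands, collapses to the inclusion $\mathcal{K}\hookrightarrow\mathcal{H}_V$, so it is isometric with range exactly $\mathcal{K}$. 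If in addition $\mathcal{K}=\mathcal{H}_V$, then $\mathcal{W}_{\mathcal{K}}=\mathcal{W}_{\mathcal{H}_V}$ and $\Theta\mathcal{W}_{\mathcal{K}}$ is cyclic for $V$, i.e.\ $\Theta$ is outer. The main obstacle is the purity inheritance described above; once it is in place, the remainder of the argument is essentially bookkeeping around the generating wandering subspace decomposition.
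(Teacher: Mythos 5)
The paper does not prove this statement at all: it is imported verbatim as \cite[Theorem 4.11]{HV21}, so there is no internal proof to compare against. Your argument is nevertheless correct and follows the route one would expect that reference to take — the ``if'' direction by conjugating the doubly commuting identity for $T$ through the unitary $M_\Theta:\mathcal{H}_T\to\mathcal{K}$, and the ``only if'' direction by checking that $(\sigma|_{\mathcal{K}},V|_{\mathcal{K}}^{(1)},\dots,V|_{\mathcal{K}}^{(k)})$ is again a pure DCI-representation (the inequality $\|\wt{V}|_{\mathcal{K},n}^{(i)*}h\|\leq\|\wt{V}_n^{(i)*}h\|$ correctly handles the one non-automatic point, purity) and then applying the Wold-type decomposition of \cite[Corollary 3.5]{HV21} with the inclusion of $\mathcal{W}_{\mathcal{K}}$ as the inner operator.
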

Note that any reducing subspace for the covariant representation \\ $(\sigma, V^{(1)}, \dots, V^{(k)})$  is doubly commuting subspace. However, the converse need not be true in general. 

\begin{remark}\label{1} Note that  if $\mathcal{K}$ is a $(\sigma, V^{(1)}, \dots, V^{(k)})$-IS of $\mathcal{ H}_{V},$ then for each $ i \in I_{k},$ ${\wt{V}|_{\mathcal{K}}^{(i)}}=P_{\mathcal{ K}}{\wt{V}|_{\mathcal{K}}^{(i)}}$ and if $\mathcal{K}$ is a $(\sigma, V^{(1)}, \dots, V^{(k)})$-co-invariant subspace of $\mathcal{ H}_{V},$ then ${\wt{V}|_{\mathcal{K}^\perp}^{(i)}}^{*}=P_{{E_{i}\ot \mathcal{K}^{\perp}}}\widetilde{V}^{(i)*}|_{\mathcal{ K}^{\perp}}.$ 
\end{remark}
Consider a completely bounded representation $(\sigma, V^{(1)}, \dots, V^{(k)})$    of  $\mathbb{E}$ on  $\mathcal{H}_{V}$ and let $\mathcal{K}$ be a subspace of $\mathcal{ H}_{V}.$  
For each $i,j \in I_{k},$  we denote the cross commutator of ${\wt{V}|_{\mathcal{K}}^{(j)}}^{*}$ and ${\wt{V}|_{\mathcal{K}}^{(i)}}$ by $[{\wt{V}|_{\mathcal{K}}^{(j)}}^{*}, {\wt{V}|_{\mathcal{K}}^{(i)}}]$ and  define it by \begin{align*}
	&[{\wt{V}|_{\mathcal{K}}^{(j)}}^{*}, {\wt{V}|_{\mathcal{K}}^{(i)}}]:=	{\wt{V}|_{\mathcal{K}}^{(j)}}^{*}{\wt{V}|_{\mathcal{K}}^{(i)}} - (I_{E_{j}} \ot {\wt{V}|_{\mathcal{K}}^{(i)}})(t_{i,j} \ot I_{\mathcal{K}})(I_{E_{i}}\ot {\wt{V}|_{\mathcal{K}}^{(j)}}^{*}).
\end{align*} 
\begin{remark}\label{11}
	It follows from Theorem \ref{MT5}  that  $\mathcal{K}^{\perp}$ is a  $(\sigma, V^{(1)}, \dots, V^{(k)})$-DCS of $\mathcal{H}_{V}$ if and only if \begin{align*}
		[\widetilde{R}^{({j})*}, \widetilde{R}^{(i)}]=0,\:\: \mbox{for all}\:\: i\neq j,
	\end{align*} where $\widetilde{R}^{(i)}:={\wt{V}|_{\mathcal{K}^{\perp}}^{(i)}} , \:\: i,j \in I_{k}.$ In this case, we say that $\mathcal{ K}$ is a BQS of $\mathcal{H}_{V}.$ This gives the following lemma.
\end{remark}

\begin{lemma}\label{Lemma 2.1}	Let $(\sigma, V^{(1)}, \dots, V^{(k)})$  be a pure DCI-representation of  $\mathbb{E}$ on  $\mathcal{H}_{V}$ and let $\mathcal{K}$ be a QS of $\mathcal{H}_{V}.$ For each $i, j \in I_{k}$, define 
	\begin{equation*}
		X_{i,j}:=(I_{E_{j}} \ot P_{ \mathcal{K}^{\perp}} \widetilde{V}^{(i)})(t_{i,j} \ot P_{\mathcal{K}})(I_{E_{i}}\ot \widetilde{V}^{(j)*} P_{\mathcal{K}^\perp}).
	\end{equation*}
	Then $\mathcal{K}$ is a BQS of $\mathcal{H}_{V}$ if and only if $X_{i,j}=0$ for  $i \neq j$.
\end{lemma}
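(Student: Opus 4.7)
The plan is to reduce the claim to the cross-commutator criterion from Remark 2.11, namely that $\mathcal{K}$ is a BQS of $\mathcal{H}_V$ if and only if $[\widetilde{R}^{(j)*},\widetilde{R}^{(i)}]=0$ for all $i\neq j$, where $\widetilde{R}^{(i)}=\widetilde{V}|_{\mathcal{K}^{\perp}}^{(i)}$. Since $\mathcal{K}$ is a QS (hence $\mathcal{K}^{\perp}$ is $V$-invariant), Remark 2.9 gives the convenient formulas $\widetilde{R}^{(i)}=\widetilde{V}^{(i)}(I_{E_i}\otimes P_{\mathcal{K}^{\perp}})$ (landing in $\mathcal{K}^{\perp}$) and $\widetilde{R}^{(j)*}=(I_{E_j}\otimes P_{\mathcal{K}^{\perp}})\widetilde{V}^{(j)*}|_{\mathcal{K}^{\perp}}$. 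So the first step is simply to quote Remark 2.11 and set up these identifications, so that proving the lemma becomes the task of showing the identity
\[
[\widetilde{R}^{(j)*},\widetilde{R}^{(i)}]=X_{i,j}\big|_{E_i\otimes\mathcal{K}^{\perp}}
\]
up to the trivial observation that $X_{i,j}$ already vanishes on $E_i\otimes\mathcal{K}$ and takes values in $E_j\otimes\mathcal{K}^{\perp}$.

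The core of the argument is an algebraic calculation in two moves. First, apply $\widetilde{R}^{(j)*}\widetilde{R}^{(i)}$ to a vector $\xi_i\otimes h\in E_i\otimes\mathcal{K}^{\perp}$: using invariance of $\mathcal{K}^{\perp}$ to drop one of the projections and then the DCI commutation relation of Definition~1.1 for $V$, this rewrites as
\[
(I_{E_j}\otimes P_{\mathcal{K}^{\perp}}\widetilde{V}^{(i)})(t_{i,j}\otimes I_{\mathcal{H}_V})(I_{E_i}\otimes \widetilde{V}^{(j)*}P_{\mathcal{K}^{\perp}})(\xi_i\otimes h).
\]
Second, write the composed term $(I_{E_j}\otimes\widetilde{R}^{(i)})(t_{i,j}\otimes I_{\mathcal{K}^{\perp}})(I_{E_i}\otimes \widetilde{R}^{(j)*})$ in the same form, taking care to slide the projection $P_{\mathcal{K}^{\perp}}$ sitting inside $\widetilde{R}^{(j)*}$ past the twist $t_{i,j}$, which only permutes tensor factors in the correspondence slots and commutes with $I_{\mathcal{H}_V}$. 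This produces exactly the same expression except that $t_{i,j}\otimes I_{\mathcal{H}_V}$ is replaced by $t_{i,j}\otimes P_{\mathcal{K}^{\perp}}$. Subtracting and using $I_{\mathcal{H}_V}-P_{\mathcal{K}^{\perp}}=P_{\mathcal{K}}$ gives
\[
[\widetilde{R}^{(j)*},\widetilde{R}^{(i)}]=(I_{E_j}\otimes P_{\mathcal{K}^{\perp}}\widetilde{V}^{(i)})(t_{i,j}\otimes P_{\mathcal{K}})(I_{E_i}\otimes \widetilde{V}^{(j)*}P_{\mathcal{K}^{\perp}})\big|_{E_i\otimes \mathcal{K}^{\perp}},
\]
which is $X_{i,j}|_{E_i\otimes\mathcal{K}^{\perp}}$.

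Finally I conclude both directions at once: $X_{i,j}$ is built with the factors $P_{\mathcal{K}^{\perp}}$ on both the domain and codomain ends, so $X_{i,j}=0$ as an operator on $E_i\otimes\mathcal{H}_V$ if and only if its restriction to $E_i\otimes\mathcal{K}^{\perp}$ is zero, which by the displayed equality happens if and only if $[\widetilde{R}^{(j)*},\widetilde{R}^{(i)}]=0$. By Remark 2.11 this is equivalent to $\mathcal{K}$ being a BQS.

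The main obstacle I expect is the bookkeeping with the unitary twist $t_{i,j}$ and the many tensor slots: one must verify carefully that the projection $P_{\mathcal{K}^{\perp}}$ introduced by $\widetilde{R}^{(j)*}$ can be pulled through $t_{i,j}\otimes I$ to land in the $\mathcal{H}_V$-factor, so that the two computations line up and differ only by $I-P_{\mathcal{K}^{\perp}}=P_{\mathcal{K}}$ in precisely the middle slot of $X_{i,j}$. Once this tensor gymnastics is performed correctly, the rest is immediate.
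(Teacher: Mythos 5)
Your proposal is correct and follows essentially the same route as the paper: both reduce the statement to the cross-commutator criterion $[\widetilde{R}^{(j)*},\widetilde{R}^{(i)}]=0$ of Remark \ref{11}, use Remark \ref{1} to express $\widetilde{R}^{(i)}, \widetilde{R}^{(j)*}$ via $\widetilde{V}^{(i)}, \widetilde{V}^{(j)*}$ and the projections, invoke the doubly commuting relation of $V$ to rewrite $\widetilde{R}^{(j)*}\widetilde{R}^{(i)}$, and subtract so that $I_{\mathcal{H}_V}-P_{\mathcal{K}^{\perp}}=P_{\mathcal{K}}$ appears in the middle slot, identifying the commutator with $X_{i,j}|_{E_i\otimes\mathcal{K}^{\perp}}$. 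Your closing observation that $X_{i,j}$ vanishes automatically on $E_i\otimes\mathcal{K}$ and has range in $E_j\otimes\mathcal{K}^{\perp}$ is exactly the implicit step the paper uses to pass between the restriction and the full operator.
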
	
\begin{proof}
	For distinct $i, j\in I_{k},$ using the DCS condition and Remark \ref{1}
	we obtain  \begin{align*}[\widetilde{R}^{({j})*}, \widetilde{R}^{(i)}]&=[{\wt{V}|_{\mathcal{K}^{\perp}}^{(j)}}^{*}, {\wt{V}|_{\mathcal{K}^{\perp}}^{(i)}}]=	{\wt{V}|_{\mathcal{K}^{\perp}}^{(j)}}^{*}{\wt{V}|_{\mathcal{K}^{\perp}}^{(i)}} \\&\:\:\:\:\: - (I_{E_{j}} \ot {\wt{V}|_{\mathcal{K}^{\perp}}^{(i)}})(t_{i,j} \ot I_{\mathcal{K}^{\perp}})(I_{E_{i}}\ot {\wt{V}|_{\mathcal{K}^{\perp}}^{(j)}}^{*})\\&=P_{{E_{j}\ot \mathcal{K}^{\perp}}}\widetilde{V}^{(j)*}{\wt{V}|_{\mathcal{K}^{\perp}}^{(i)}}-P_{{E_{j}\ot \mathcal{K}^{\perp}}}(I_{E_{j}} \ot {\wt{V}^{(i)}})(t_{i,j} \ot P_{\mathcal{K}^{\perp}})(I_{E_{i}}\ot {\wt{V}|_{\mathcal{K}^{\perp}}^{(j)}}^{*})\\&=P_{{E_{j}\ot \mathcal{K}^{\perp}}}(I_{E_{j}} \ot {\wt{V}^{(i)}})(t_{i,j} \ot P_{\mathcal{K}})(I_{E_{i}}\ot {\wt{V}|_{\mathcal{K}^{\perp}}^{(j)}}^{*}),
	\end{align*}  where $P_{\mathcal{K}^{\perp}}$ is an orthogonal projection of $\mathcal{H}_{V}$ on $\mathcal{K}^{\perp}$ and $P_{E_{j} \ot \mathcal{K}^{\perp}}$ is an orthogonal projection of $E_{j} \ot \mathcal{H}_{V}$ on $E_{j} \ot \mathcal{K}^{\perp}.$  Hence Remark \ref{11} gives that $\mathcal{ K}^{\perp}$ is a DCS of $\mathcal{ H}_{V}$  if and only if $${\big((I_{E_{j}} \ot P_{ \mathcal{K}^{\perp}})(I_{E_{j}}\ot \widetilde{V}^{(i)})(t_{i,j} \ot P_{\mathcal{K}})(I_{E_{i}}\ot \widetilde{V}^{(j)*})\big)}|_{E_{i} \ot \mathcal{K}^{\perp}}=0,$$ which means  $X_{i,j}=0.$\end{proof}

Consider a DCI-representation $(\sigma, V^{(1)}, \dots, V^{(k)})$ of  $\mathbb{E}$ on  $\mathcal{H}_{V}$ and let $\mathcal{K}$ be a QS of $\mathcal{H}_{V}.$ Define \begin{align*}
	\mu(a):=P_{\mathcal{K}}\sigma(a)P_{\mathcal{K}}  \:\mbox{and} \:\:T^{(i)}(\xi_{i}):=P_{\mathcal{K}}V^{(i)}(\xi_{i})P_{\mathcal{K}}, \:\:\: \text {for each}\:\: \xi_{i} \in E_{i},a \in \mathcal{B}, 
\end{align*}
i.e.,
$\wT^{(i)}=P_{\mathcal{K}}\widetilde{V}^{(i)}(I_{E_{i}}\ot P_{\mathcal{K}}),\:\:\:\:i\in I_{k}.$ Therefore $T=(\mu,T^{(1)},T^{(2)}, \dots, T^{(k)}) $ is a completely contractive covariant representation of $\mathbb{E}$ on $\mathcal{K}$  having the commutant relation (\ref{rep}).

Throughout in this section we fix $i,j \in I_{k}$ and assume that $i\neq j$ and also $ 
\mathbf{\hat{m}}_{i}$ denote multi-indices in $\mathbb{N}^{k}_{0}$ whose  $i^{th}$ entry is zero. The following lemma establishes a relationship between a DCI-representation and its compression on $\mathcal{ K}$.
\begin{lemma}\label{2.2}
	Let $(\sigma, V^{(1)}, \dots, V^{(k)})$  be a DCI-representation of  $\mathbb{E}$ on  $\mathcal{H}_{V}.$ Let $\mathcal{K}$ be a QS of $\mathcal{H}_{V}$. Then \begin{equation*}
		[\wT^{(i)}, \wT_{\mathbf{\hat{m}}_{i}}^{*}]=(I_{\mathbb{E}{(\mathbf{\hat{m}}_{i}})} \ot P_{\mathcal{K}})\widetilde{V}_{\mathbf{\hat{m}}_{i}}^{*}P_{\mathcal{K}^{\perp}}\widetilde{V}^{(i)}(I_{E_{i}}\ot P_{\mathcal{K}}).
	\end{equation*}
\end{lemma}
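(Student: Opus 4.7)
My plan is to compute both summands of the cross-commutator by unfolding the compressions to the ambient Hilbert space $\mathcal{H}_V$, and then combine them using the DCI relation of $V$ together with the QS property of $\mathcal{K}$. Writing $P:=P_{\mathcal{K}}$ and $P^{\perp}:=I-P=P_{\mathcal{K}^{\perp}}$, the QS hypothesis $\widetilde V^{(l)*}\mathcal{K}\subseteq E_l\ot\mathcal{K}$ for each $l$ is equivalent to $P\widetilde V^{(l)}(I_{E_l}\ot P^{\perp})=0$, hence $P\widetilde V^{(l)}(I_{E_l}\ot P)=P\widetilde V^{(l)}$ on the range of $I_{E_l}\ot P$. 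Iterating this factor-by-factor through the definition of $\widetilde V_{\mathbf{\hat m}_i}$ (each intermediate $P^{\perp}$ annihilates against a $P\widetilde V^{(l)}$) gives the clean formulas
\[
\wT_{\mathbf{\hat m}_i}=P\,\widetilde V_{\mathbf{\hat m}_i}(I_{\mathbb{E}(\mathbf{\hat m}_i)}\ot P),\qquad \wT_{\mathbf{\hat m}_i}^{*}=(I_{\mathbb{E}(\mathbf{\hat m}_i)}\ot P)\,\widetilde V_{\mathbf{\hat m}_i}^{*}\,P,\qquad \wT^{(i)}=P\,\widetilde V^{(i)}(I_{E_i}\ot P),
\]
so the composed compressions can be replaced by compressions of the full products.

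Next, I would invoke the multi-index DCI identity
\[
\widetilde V_{\mathbf{\hat m}_i}^{*}\,\widetilde V^{(i)}=(I_{\mathbb{E}(\mathbf{\hat m}_i)}\ot \widetilde V^{(i)})\,(t_{i,\mathbf{\hat m}_i}\ot I_{\mathcal{H}_V})\,(I_{E_i}\ot \widetilde V_{\mathbf{\hat m}_i}^{*}),
\]
which follows by induction on $|\mathbf{\hat m}_i|$ from the atomic DCI relation $\widetilde V^{(j)*}\widetilde V^{(i)}=(I_{E_j}\ot \widetilde V^{(i)})(t_{i,j}\ot I)(I_{E_i}\ot \widetilde V^{(j)*})$: at each step one pulls $\widetilde V^{(i)}$ past one further factor of $\widetilde V^{(l)*}$ for $l\neq i$, picking up an atomic twist $t_{i,l}$, and the coherence of the family $\{t_{p,q}\}$ assembles these into $t_{i,\mathbf{\hat m}_i}$.

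With these two ingredients in hand, I would substitute the formulas for $\wT^{(i)}$ and $\wT_{\mathbf{\hat m}_i}^{*}$ into the two summands of $[\wT^{(i)},\wT_{\mathbf{\hat m}_i}^{*}]$. In the twisted summand $(I_{\mathbb{E}(\mathbf{\hat m}_i)}\ot \wT^{(i)})(t_{i,\mathbf{\hat m}_i}\ot I_{\mathcal K})(I_{E_i}\ot \wT_{\mathbf{\hat m}_i}^{*})$, the internal projection $I_{E_i\ot\mathbb{E}(\mathbf{\hat m}_i)}\ot P$ commutes with the twist (it acts on the last factor) and is absorbed against $\widetilde V_{\mathbf{\hat m}_i}^{*}P=(I\ot P)\widetilde V_{\mathbf{\hat m}_i}^{*}P$ via QS; what remains collapses, by the multi-index DCI identity, to $(I_{\mathbb{E}(\mathbf{\hat m}_i)}\ot P)\widetilde V_{\mathbf{\hat m}_i}^{*}\widetilde V^{(i)}(I_{E_i}\ot P)$. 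The straight summand $\wT_{\mathbf{\hat m}_i}^{*}\wT^{(i)}$ is directly $(I_{\mathbb{E}(\mathbf{\hat m}_i)}\ot P)\widetilde V_{\mathbf{\hat m}_i}^{*}P\widetilde V^{(i)}(I_{E_i}\ot P)$. Taking the difference with the sign fixed by the definition of $[\wT^{(i)},\wT_{\mathbf{\hat m}_i}^{*}]$ leaves $(I_{\mathbb{E}(\mathbf{\hat m}_i)}\ot P)\widetilde V_{\mathbf{\hat m}_i}^{*}(I-P)\widetilde V^{(i)}(I_{E_i}\ot P)$, and substituting $I-P=P_{\mathcal{K}^{\perp}}$ gives the stated formula. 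The main obstacle is the careful bookkeeping of tensor factor positions and twist coherence in the multi-index DCI identity; once that is settled, the rest is a straightforward shuffling of projections.
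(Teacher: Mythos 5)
Your proposal is correct and follows essentially the same route as the paper's proof: you use the quotient-subspace property to write $\wT_{\mathbf{\hat m}_i}^{*}=(I_{\mathbb{E}(\mathbf{\hat m}_i)}\ot P_{\mathcal K})\widetilde V_{\mathbf{\hat m}_i}^{*}P_{\mathcal K}=\widetilde V_{\mathbf{\hat m}_i}^{*}P_{\mathcal K}$ and $\wT^{(i)}=P_{\mathcal K}\widetilde V^{(i)}(I_{E_i}\ot P_{\mathcal K})$, collapse the twisted summand via the multi-index doubly commuting identity $\widetilde V_{\mathbf{\hat m}_i}^{*}\widetilde V^{(i)}=(I_{\mathbb{E}(\mathbf{\hat m}_i)}\ot\widetilde V^{(i)})(t_{i,\mathbf{\hat m}_i}\ot I)(I_{E_i}\ot\widetilde V_{\mathbf{\hat m}_i}^{*})$, and subtract to produce $P_{\mathcal K^{\perp}}=I_{\mathcal H_V}-P_{\mathcal K}$, exactly as in the paper (which leaves the inductive justification of the multi-index identity implicit, just as you sketch it).
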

\begin{proof}
	Observe that $\widetilde{T}^{*}_{\mathbf{m}}=\widetilde{V}_{\mathbf{m}}^{*}P_{\mathcal{K}}$ and therefore $\widetilde{T}_{\mathbf{m}}=P_{\mathcal{K}}\widetilde{V}_{\mathbf{m}}$   for all $\mathbf{m}=(m_1,m_2,\dots,m_k)\in \mathbb{N}^k_{0}.$  Substituting the values of  $\wT_{\mathbf{\hat{m}}_{i}}^{*}$ and $\wT^{(i)},$  we get\begin{align*}
		&	[\wT^{(i)}, \wT_{\mathbf{\hat{m}}_i}^{*}]\\&=(I_{\mathbb{E}{(\mathbf{\hat{m}}_{i}})} \ot P_{\mathcal{K}}\widetilde{V}^{(i)}(I_{E_{i}}\ot P_{\mathcal{K}}))(t_{i,\mathbf{\hat{m}}_{i}}\ot I_{\mathcal{H}_{V}})(I_{E_{i}}\ot\widetilde{V}_{\mathbf{\hat{m}}_{i}}^{*}P_{\mathcal{K}})\\&\:\:\:\:\:\:\:-\widetilde{V}_{\mathbf{\hat{m}}_{i}}^{*}P_{\mathcal{K}}\widetilde{V}^{(i)}(I_{E_{i}}\ot P_{\mathcal{K}})\\&=(I_{\mathbb{E}{(\mathbf{\hat{m}}_{i}})} \ot P_{\mathcal{K}} \widetilde{V}^{(i)})(t_{i,\mathbf{\hat{m}}_{i}}\ot I_{\mathcal{H}_{V}})( I_{E_{i}}\ot ((I_{\mathbb{E}{(\mathbf{\hat{m}}_{i})}}\ot P_{\mathcal{K}})\widetilde{V}_{\mathbf{\hat{m}}_{i}}^{*} P_{\mathcal{K}}))\\&\:\:\:\:\:\:\:-\widetilde{V}_{\mathbf{\hat{m}}_{i}}^{*}P_{\mathcal{K}}\widetilde{V}^{(i)}(I_{E_{i}}\ot P_{\mathcal{K}})\\&=(I_{\mathbb{E}{(\mathbf{\hat{m}}_{i}})} \ot P_{\mathcal{K}} \widetilde{V}^{(i)})(t_{i,\bf{\hat{m}}_{i}}\ot I_{\mathcal{H}_{V}})( I_{E_{i}}\ot \widetilde{V}_{\mathbf{\hat{m}}_{i}}^{*} P_{\mathcal{K}})-\widetilde{V}_{\mathbf{\hat{m}}_{i}}^{*}P_{\mathcal{K}}\widetilde{V}^{(i)}(I_{E_{i}}\ot P_{\mathcal{K}})\\&=(I_{\mathbb{E}{(\mathbf{\hat{m}}_{i}})} \ot P_{\mathcal{K}})\widetilde{V}_{\mathbf{\hat{m}}_{i}}^{*} \widetilde{V}^{(i)}( I_{E_{i}}\ot P_{\mathcal{K}})-(I_{\mathbb{E}{(\mathbf{\hat{m}}_{i}})} \ot P_{\mathcal{K}})\widetilde{V}_{\mathbf{\hat{m}}_{i}}^{*}P_{\mathcal{K}}\widetilde{V}^{(i)}(I_{E_{i}}\ot P_{\mathcal{K}})\\&=(I_{\mathbb{E}{(\mathbf{\hat{m}}_{i}})} \ot P_{\mathcal{K}})\widetilde{V}_{\mathbf{\hat{m}}_{i}}^{*}[I_{\mathcal{H}_{V}}-P_{\mathcal{K}}]\widetilde{V}^{(i)}(I_{E_{i}}\ot P_{\mathcal{K}})\\&=(I_{\mathbb{E}{(\mathbf{\hat{m}}_{i}})} \ot P_{\mathcal{K}})\widetilde{V}_{\mathbf{\hat{m}}_{i}}^{*}P_{\mathcal{K}^{\perp}}\widetilde{V}^{(i)}(I_{E_{i}}\ot P_{\mathcal{K}}),
	\end{align*} where  $t_{i,\mathbf{\hat{m}}_{i}}: E_i \ot \mathbb{E}({\mathbf{\hat{m}}_{i}}) \to \mathbb{E}({\mathbf{\hat{m}}_{i}}) \ot E_i, \:\:\:\:i\in I_{k}$  are unitary maps which is coming from the unitary
	isomorphisms $t_{i,j}$.  This completes  the  proof of the lemma.\end{proof}
For each $i\in I_{k},$ we have  \begin{align}\label{positive} 0 &\leq (I_{E_{i}}\ot P_{\mathcal{K}})\widetilde{V}^{(i) *}P_{\mathcal{K}^{\perp}}\widetilde{V}^{(i)}(I_{E_{i}}\ot P_{\mathcal{K}}) \nonumber\\&= (I_{E_{i}}\ot P_{\mathcal{K}})\widetilde{V}^{(i) *}(I_{\mathcal{H}_{V}}-P_{\mathcal{K}})\widetilde{V}^{(i)}(I_{E_{i}}\ot P_{\mathcal{K}})\nonumber\\&=  (I_{E_{i}}\ot P_{\mathcal{K}})-
	\wT^{(i) *}\wT^{(i)} .
\end{align} Hence  $(I_{E_{i}}\ot P_{\mathcal{K}}-\wT^{(i) *}\wT^{(i)})^{\frac{1}{2}}$ exists and let it be denoted by  $\Delta{({T|_{\mathcal{K}}^{(i)}})}.$

The following lemma  relate $\Delta{({T|_{\mathcal{K}}^{(i)}})}$ to $[\wT^{(i)}, \wT_{\mathbf{\hat{m}}_{i}}^{*}].$  
\begin{lemma}\label{lemma2.5}
	Let $\mathbf{\hat{m}}_{i} \in \mathbb{N}^k_{0}\setminus\{\mathbf{0}\}.$ Then there exist contractions $ X_{\mathbf{\hat{m}}_{i}}:E_{i} \ot \mathcal{K} \to \mathbb{E}(\mathbf{\hat{m}}_{i})\ot \mathcal{K}$ and $Y_{\mathbf{\hat{m}}_{i}}:  \mathbb{E}(\mathbf{\hat{m}}_{i})\ot \mathcal{K} \to E_{i} \ot \mathcal{K} $  such that $[\wT^{(i)}, \wT_{\mathbf{\hat{m}}_{i}}^{*}]= X_{\mathbf{\hat{m}}_{i}}\Delta{({T|_{\mathcal{K}}^{(i)}})}$ and $[\wT_{\mathbf{\hat{m}}_{i}},\wT^{(i)*}]=\Delta{({T|_{\mathcal{K}}^{(i)}})} Y_{\mathbf{\hat{m}}_{i}}.$
\end{lemma}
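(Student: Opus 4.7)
The plan is to exploit Lemma~\ref{2.2} together with the polar decomposition of
\[
A := P_{\mathcal{K}^{\perp}}\widetilde{V}^{(i)}(I_{E_{i}}\ot P_{\mathcal{K}}) \colon E_{i}\ot \mathcal{K} \longrightarrow \mathcal{K}^{\perp}.
\]
Lemma~\ref{2.2} already yields
\[
[\wT^{(i)}, \wT_{\mathbf{\hat{m}}_{i}}^{*}] = (I_{\mathbb{E}(\mathbf{\hat{m}}_{i})}\ot P_{\mathcal{K}})\widetilde{V}_{\mathbf{\hat{m}}_{i}}^{*}\,A,
\]
and the positivity computation leading to \eqref{positive} shows precisely that $A^{*}A=(I_{E_{i}}\ot P_{\mathcal{K}})-\wT^{(i)*}\wT^{(i)}$, i.e.\ $|A|=\Delta{({T|_{\mathcal{K}}^{(i)}})}$.

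Next I would write the polar decomposition $A = U\,\Delta{({T|_{\mathcal{K}}^{(i)}})}$, with $U\colon E_{i}\ot \mathcal{K}\to \mathcal{K}^{\perp}$ a partial isometry, and set
\[
X_{\mathbf{\hat{m}}_{i}} := (I_{\mathbb{E}(\mathbf{\hat{m}}_{i})}\ot P_{\mathcal{K}})\,\widetilde{V}_{\mathbf{\hat{m}}_{i}}^{*}\,U \colon E_{i}\ot \mathcal{K} \longrightarrow \mathbb{E}(\mathbf{\hat{m}}_{i})\ot \mathcal{K}.
\]
Since $(\sigma, V^{(1)},\dots,V^{(k)})$ is a DCI-representation, $\widetilde{V}_{\mathbf{\hat{m}}_{i}}$ is an isometry, so $\widetilde{V}_{\mathbf{\hat{m}}_{i}}^{*}$ is a contraction; together with the partial isometry $U$ and the projection $(I\ot P_{\mathcal{K}})$, this forces $\|X_{\mathbf{\hat{m}}_{i}}\|\leq 1$. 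Substituting $A=U\,\Delta{({T|_{\mathcal{K}}^{(i)}})}$ into the expression coming from Lemma~\ref{2.2} delivers the first identity $[\wT^{(i)},\wT_{\mathbf{\hat{m}}_{i}}^{*}]=X_{\mathbf{\hat{m}}_{i}}\,\Delta{({T|_{\mathcal{K}}^{(i)}})}$.

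For the second identity I would pass to adjoints. Writing out the cross commutators from their definitions and using the identification $t_{i,\mathbf{\hat{m}}_{i}}^{*}=t_{\mathbf{\hat{m}}_{i},i}$ of the flip unitaries, one sees immediately that $[\wT_{\mathbf{\hat{m}}_{i}}, \wT^{(i)*}] = [\wT^{(i)}, \wT_{\mathbf{\hat{m}}_{i}}^{*}]^{*}$. Taking adjoints in the identity obtained above gives
\[
[\wT_{\mathbf{\hat{m}}_{i}}, \wT^{(i)*}] = \Delta{({T|_{\mathcal{K}}^{(i)}})}\,X_{\mathbf{\hat{m}}_{i}}^{*},
\]
so setting $Y_{\mathbf{\hat{m}}_{i}}:=X_{\mathbf{\hat{m}}_{i}}^{*}$, which is again a contraction, finishes the argument.

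The only real care required is the bookkeeping that isolates $A$ as a bounded operator $E_{i}\ot \mathcal{K}\to \mathcal{K}^{\perp}$, so that its polar decomposition sits inside $\mathcal{K}^{\perp}$ and $\widetilde{V}_{\mathbf{\hat{m}}_{i}}^{*}$ can be applied next; once that is in place, Lemma~\ref{2.2}, the positivity computation in \eqref{positive}, and the DCI assumption (which provides isometry of $\widetilde{V}_{\mathbf{\hat{m}}_{i}}$, and hence the contractivity of $X_{\mathbf{\hat{m}}_{i}}$) do all the work, with no further analytic input needed.
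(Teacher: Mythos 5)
Your argument is correct, and it reaches the factorization by a route that differs from the paper's in one genuine respect. The paper also starts from Lemma~\ref{2.2}, but then proves the operator inequality $[\wT^{(i)}, \wT_{\mathbf{\hat{m}}_{i}}^{*}]^{*}[\wT^{(i)}, \wT_{\mathbf{\hat{m}}_{i}}^{*}]\leq \Delta{({T|_{\mathcal{K}}^{(i)}})}^{2}$ — the computation uses positivity of $I_{\mathcal{H}_{V}}-\widetilde{V}_{\mathbf{\hat{m}}_{i}}(I_{\mathbb{E}(\mathbf{\hat{m}}_{i})}\ot P_{\mathcal{K}})\widetilde{V}_{\mathbf{\hat{m}}_{i}}^{*}$ — and then invokes Douglas's range inclusion theorem \cite{D96} to produce the contraction $X_{\mathbf{\hat{m}}_{i}}$ abstractly; the second identity is obtained, exactly as you do, from $[\wT^{(i)},\wT_{\mathbf{\hat{m}}_{i}}^{*}]^{*}=[\wT_{\mathbf{\hat{m}}_{i}},\wT^{(i)*}]$. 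You instead observe that for $A=P_{\mathcal{K}^{\perp}}\widetilde{V}^{(i)}(I_{E_{i}}\ot P_{\mathcal{K}})$ the computation in \eqref{positive} gives the exact identity $A^{*}A=\Delta{({T|_{\mathcal{K}}^{(i)}})}^{2}$, so the polar decomposition $A=U\Delta{({T|_{\mathcal{K}}^{(i)}})}$ is available and the commutator, being $(I_{\mathbb{E}(\mathbf{\hat{m}}_{i})}\ot P_{\mathcal{K}})\widetilde{V}_{\mathbf{\hat{m}}_{i}}^{*}A$ by Lemma~\ref{2.2}, factors at once with the explicit contraction $X_{\mathbf{\hat{m}}_{i}}=(I_{\mathbb{E}(\mathbf{\hat{m}}_{i})}\ot P_{\mathcal{K}})\widetilde{V}_{\mathbf{\hat{m}}_{i}}^{*}U$. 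What this buys is a concrete formula for $X_{\mathbf{\hat{m}}_{i}}$ and the elimination of both the inequality computation and the appeal to Douglas's theorem (polar decomposition playing its role); what the paper's route buys is that it only needs the majorization $\leq\Delta^{2}$, not the exact identification of $|A|$, which is the more flexible pattern when such equalities are unavailable. Your contractivity bookkeeping is fine — $\widetilde{V}_{\mathbf{\hat{m}}_{i}}^{*}$ is a contraction (even an isometry's adjoint here), $U$ is a partial isometry, and the projection is contractive — and setting $Y_{\mathbf{\hat{m}}_{i}}=X_{\mathbf{\hat{m}}_{i}}^{*}$ matches the paper's handling of the second identity.
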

\begin{proof}
	Using  Lemma \ref{2.2} we get \begin{align*}&
		\Delta{({T|_{\mathcal{K}}^{(i)}})}^{2}-[\wT^{(i)}, \wT_{\mathbf{\hat{m}}_{i}}^{*}]^{*}[\wT^{(i)}, \wT_{\mathbf{\hat{m}}_{i}}^{*}]\\&=(I_{E_{i}}\ot P_{\mathcal{K}})\widetilde{V}^{(i) *}P_{\mathcal{K}^{\perp}}\widetilde{V}^{(i)}(I_{E_{i}}\ot P_{\mathcal{K}})\\&\:\:\:\:\:\:\:-(I_{E_{i}}\ot P_{\mathcal{K}})\widetilde{V}^{(i)*}P_{\mathcal{K}^{\perp}}\widetilde{V}_{\mathbf{\hat{m}}_{i}}(I_{\mathbb{E}{(\mathbf{\hat{m}}_{i}})} \ot P_{\mathcal{K}})\widetilde{V}_{\mathbf{\hat{m}}_{i}}^{*}P_{\mathcal{K}^{\perp}}\widetilde{V}^{(i)}(I_{E_{i}}\ot P_{\mathcal{K}})\\&=(I_{E_{i}}\ot P_{\mathcal{K}})\widetilde{V}^{(i) *}P_{\mathcal{K}^{\perp}}[I_{\mathcal{H}_{V}}-\widetilde{V}_{\mathbf{\hat{m}}_{i}}(I_{\mathbb{E}{(\mathbf{\hat{m}}_{i})}} \ot P_{\mathcal{K}})\widetilde{V}_{\mathbf{\hat{m}}_{i}}^{*}] P_{\mathcal{K}^{\perp}}\widetilde{V}^{(i)}(I_{E_{i}}\ot P_{\mathcal{K}})\\& =((I_{E_{i}}\ot P_{\mathcal{K}})\widetilde{V}^{(i) *}P_{\mathcal{K}^{\perp}})[I_{\mathcal{H}_{V}}-\widetilde{V}_{\mathbf{\hat{m}}_{i}}(I_{\mathbb{E}{(\mathbf{\hat{m}}_{i})}} \ot P_{\mathcal{K}})\widetilde{V}_{\mathbf{\hat{m}}_{i}}^{*}] ((I_{E_{i}}\ot P_{\mathcal{K}})\widetilde{V}^{(i) *}P_{\mathcal{K}^{\perp}})^{*}.
	\end{align*} As $\widetilde{V}_{\mathbf{\hat{m}}_{i}}(I_{\mathbb{E}{(\mathbf{\hat{m}}_{i}})} \ot P_{\mathcal{K}})$ is a contraction, $
	I_{\mathcal{H}_{V}}-\widetilde{V}_{\mathbf{\hat{m}}_{i}}(I_{\mathbb{E}{(\mathbf{\hat{m}}_{i})}} \ot P_{\mathcal{K}})\widetilde{V}_{\mathbf{\hat{m}}_{i}}^{*} \geq 0,
	$ and hence \begin{equation*}
		\Delta{({T|_{\mathcal{K}}^{(i)}})}^{2}-[\wT^{(i)}, \wT_{\mathbf{\hat{m}}_{i}}^{*}]^{*}[\wT^{(i)}, \wT_{\mathbf{\hat{m}}_{i}}^{*}] \geq 0.
	\end{equation*}That is $[\wT^{(i)}, \wT_{\mathbf{\hat{m}}_{i}}^{*}]^{*}[\wT^{(i)}, \wT_{\mathbf{\hat{m}}_{i}}^{*}]\leq 	\Delta{({T|_{\mathcal{K}}^{(i)}})}^{2}.$ Therefore by Douglas's range inclusion theorem \cite{D96}, there exists a contraction $ X_{\mathbf{\hat{m}}_{i}}:E_{i} \ot \mathcal{K} \to \mathbb{E}(\mathbf{\hat{m}}_{i})\ot \mathcal{K}$ such that $ [\wT^{(i)}, \wT_{\mathbf{\hat{m}}_{i}}^{*}]=X_{\mathbf{\hat{m}}_{i}}\Delta({T|_{\mathcal{K}}^{(i)}}).$  Since  $ [\wT^{(i)},\wT_{\mathbf{\hat{m}}_{i}}^{*}]^{*}=[\wT_{\mathbf{\hat{m}}_{i}},\wT^{(i)*}],$ which proves the second equality.
\end{proof}
\begin{lemma}\label{lemma 2.4}
	Consider a  DCI-representation  $(\sigma, V^{(1)}, \dots, V^{(k)})$   of  $\mathbb{E}$ on  $\mathcal{H}_{V}$ and let $\mathcal{K}$ be a QS of $\mathcal{H}_{V}$. If \begin{equation}\label{hyp}
		(I_{E_{j}} \ot \Delta{({T|_{\mathcal{K}}^{(i)}})}(t_{i,j} \ot I_{\mathcal{H}_{V}})	(I_{E_{i}} \ot \Delta{({T|_{\mathcal{K}}^{(j)}})}=0,
	\end{equation} then, for each $\mathbf{\hat{m}}_{i},\mathbf{\hat{n}}_{j} \in \mathbb{N}^k_{0}\setminus\{\mathbf{0}\} ,$ \begin{enumerate}
		\item $(I_{E_j}\ot (I_{\mathbb{E}{(\mathbf{\hat{m}_i})}} \ot P_{\mathcal{K}}) \widetilde{V}_{\mathbf{\hat{m}}_{i}}^{*})X_{i,j}(I_{E_i} \ot \widetilde{V}_{\mathbf{\hat{n}}_{j}} (I_{\mathbb{E}{(\mathbf{\hat{n}}_j})} \ot P_{\mathcal{K}}))=0,
		$
		\item $ (I_{E_j}\ot (I_{E_{i}} \ot P_{\mathcal{K}})\widetilde{V}^{(i)*})X_{i,j}(I_{E_i} \ot \widetilde{V}_{\mathbf{\hat{n}}_{j}} (I_{\mathbb{E}{(\mathbf{\hat{n}}_j})} \ot P_{\mathcal{K}}))=0,$ and
		\item $(I_{E_j}\ot (I_{\mathbb{E}{(\mathbf{\hat{m}_i})}} \ot P_{\mathcal{K}}) \widetilde{V}_{\mathbf{\hat{m}}_{i}}^{*})X_{i,j} (I_{E_i} \ot \widetilde{V}^{(j)}( I_{E_{j}} \ot P_{\mathcal{K}}))=0.$
	\end{enumerate}
\end{lemma}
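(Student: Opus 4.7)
The plan is to substitute the definition of $X_{i,j}$ in each of the three expressions and reduce them to a product containing the hypothesis string $(I_{E_{j}} \ot \Delta({T|_{\mathcal{K}}^{(i)}}))(t_{i,j}\ot I_{\mathcal{H}_{V}})(I_{E_{i}} \ot \Delta({T|_{\mathcal{K}}^{(j)}}))$. The key algebraic move is the factorization $(t_{i,j}\ot P_{\mathcal{K}})=(I_{E_{j}\ot E_{i}}\ot P_{\mathcal{K}})(t_{i,j}\ot I_{\mathcal{H}_{V}})$, which lets me absorb the $(I\ot P_{\mathcal{K}})$'s flanking the central $(t_{i,j}\ot I_{\mathcal{H}_{V}})$ into the adjacent sandwiches. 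After this absorption, Lemma \ref{2.2} recognises each sandwich as a cross-commutator of the compression $(\mu,T^{(1)},\dots,T^{(k)})$, and Lemma \ref{lemma2.5} factors that commutator through the appropriate $\Delta$-operator.

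For part (1), this rewriting produces
\begin{align*}
&\bigl(I_{E_j}\ot (I_{\mathbb{E}(\mathbf{\hat{m}}_i)}\ot P_{\mathcal{K}})\widetilde{V}_{\mathbf{\hat{m}}_i}^{*}P_{\mathcal{K}^{\perp}}\widetilde{V}^{(i)}(I_{E_i}\ot P_{\mathcal{K}})\bigr)(t_{i,j}\ot I_{\mathcal{H}_V}) \\
&\qquad \bigl(I_{E_i}\ot (I_{E_j}\ot P_{\mathcal{K}})\widetilde{V}^{(j)*}P_{\mathcal{K}^{\perp}}\widetilde{V}_{\mathbf{\hat{n}}_j}(I_{\mathbb{E}(\mathbf{\hat{n}}_j)}\ot P_{\mathcal{K}})\bigr).
\end{align*}
By Lemma \ref{2.2} (and its adjoint for the right factor) the outer two blocks equal $I_{E_j}\ot[\widetilde{T}^{(i)},\widetilde{T}_{\mathbf{\hat{m}}_i}^{*}]$ and $I_{E_i}\ot[\widetilde{T}_{\mathbf{\hat{n}}_j},\widetilde{T}^{(j)*}]$; by Lemma \ref{lemma2.5} these are in turn $I_{E_j}\ot X_{\mathbf{\hat{m}}_i}\Delta({T|_{\mathcal{K}}^{(i)}})$ and $I_{E_i}\ot \Delta({T|_{\mathcal{K}}^{(j)}})Y_{\mathbf{\hat{n}}_j}$. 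Moving $I_{E_j}\ot X_{\mathbf{\hat{m}}_i}$ to the far left and $I_{E_i}\ot Y_{\mathbf{\hat{n}}_j}$ to the far right leaves exactly the hypothesis string sandwiched in the middle, which vanishes by \eqref{hyp}.

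For parts (2) and (3) the argument is identical in spirit, but the sandwich on the side where $\widetilde{V}^{(i)}$ (resp.\ $\widetilde{V}^{(j)}$) appears twice simplifies more directly. Since $\widetilde{V}^{(i)}$ is an isometry,
\begin{equation*}
(I_{E_i}\ot P_{\mathcal{K}})\widetilde{V}^{(i)*}P_{\mathcal{K}^{\perp}}\widetilde{V}^{(i)}(I_{E_i}\ot P_{\mathcal{K}})=(I_{E_i}\ot P_{\mathcal{K}})-\widetilde{T}^{(i)*}\widetilde{T}^{(i)}=\Delta({T|_{\mathcal{K}}^{(i)}})^{2},
\end{equation*}
and analogously for $j$. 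Thus in case (2) the central slab becomes $(I_{E_j}\ot \Delta({T|_{\mathcal{K}}^{(i)}})^{2})(t_{i,j}\ot I_{\mathcal{H}_V})(I_{E_i}\ot \Delta({T|_{\mathcal{K}}^{(j)}}))$, and in case (3) the symmetric product with $\Delta({T|_{\mathcal{K}}^{(j)}})^{2}$ on the right; both vanish because multiplying the identity \eqref{hyp} by $I\ot\Delta({T|_{\mathcal{K}}^{(i)}})$ on the left, or by $I\ot\Delta({T|_{\mathcal{K}}^{(j)}})$ on the right, preserves the zero.

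The main obstacle is purely bookkeeping: one must carefully track the tensor slots when sliding $P_{\mathcal{K}}$ across the twist $t_{i,j}$ and when combining the sandwich factors with the surrounding $I_{E_j}\ot(\cdot)$ and $I_{E_i}\ot(\cdot)$, so that the sandwiches match precisely the forms appearing in Lemmas \ref{2.2} and \ref{lemma2.5}. Once the factorizations are in place, the vanishing in each of (1), (2), (3) is a direct consequence of \eqref{hyp}.
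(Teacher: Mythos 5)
Your proposal is correct and takes essentially the same route as the paper: after substituting $X_{i,j}$ and absorbing the projections across $t_{i,j}$, you identify the flanking blocks with the cross-commutators of Lemma \ref{2.2}, factor them through $\Delta({T|_{\mathcal{K}}^{(i)}})$ and $\Delta({T|_{\mathcal{K}}^{(j)}})$ via Lemma \ref{lemma2.5} (using $(I_{E_{i}}\ot P_{\mathcal{K}})-\wT^{(i)*}\wT^{(i)}=\Delta({T|_{\mathcal{K}}^{(i)}})^{2}$ from (\ref{positive}) in parts (2) and (3)), and conclude by the hypothesis (\ref{hyp}). The only difference is presentational: the paper starts from the commutator products and shows they equal the stated expressions, whereas you run the same computation in the reverse direction.
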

\begin{proof} (1)  By  Lemma \ref{lemma2.5}, there exist contractions $X_{\mathbf{\hat{m}}_{i}}$ and $Y_{\mathbf{\hat{n}}_{j}}$  such that $[\wT^{(i)}, \wT_{\mathbf{\hat{m}}_{i}}^{*}]= X_{\mathbf{\hat{m}}_{i}}\Delta{({T|_{\mathcal{K}}^{(i)}})}$ and $[\wT_{\mathbf{\hat{n}}_{j}},\wT^{(j)*}]= \Delta{({T|_{\mathcal{K}}^{(i)}})} Y_{\mathbf{\hat{n}}_{j}}$  and therefore  \begin{align*}& (I_{E_j} \ot[\wT^{(i)}, \wT_{\mathbf{\hat{m}}_{i}}^{*}] )(t_{i,j}\ot I_{\mathcal{H}_{V}})(I_{E_i} \ot[\wT^{(j)}, \wT_{\mathbf{\hat{n}}_{j}}^{*}]^{*} )\\&=
		(I_{E_j} \ot X_{\mathbf{\hat{m}}_{i}}\Delta{({T|_{\mathcal{K}}^{(i)}})} )(t_{i,j}\ot I_{\mathcal{H}_{V}})(I_{E_i} \ot \Delta{({T|_{\mathcal{K}}^{(i)}})} Y_{\mathbf{\hat{n}}_{j}})\\&=0,
	\end{align*} here the last equality follows by hypothesis (\ref{hyp}).
	
	On the other hand,  by using Lemma \ref{2.2}, we obtain
	\begin{align*}&
		(I_{E_j} \ot[\wT^{(i)}, \wT_{\mathbf{\hat{m}}_{i}}^{*}] )(t_{i,j}\ot I_{\mathcal{H}_{V}})(I_{E_i} \ot[\wT^{(j)}, \wT_{\mathbf{\hat{n}}_{j}}^{*}]^{*} )\\&=(I_{E_j} \ot (I_{\mathbb{E}{(\mathbf{\hat{m}}_{i}})} \ot P_{\mathcal{K}})\widetilde{V}_{\mathbf{\hat{m}}_{i}}^{*}P_{\mathcal{K}^{\perp}}\widetilde{V}^{(i)}(I_{E_{i}}\ot P_{\mathcal{K}}))(t_{i,j}\ot I_{\mathcal{H}_{V}})\\&\:\:\:\:\:\:\:(I_{E_i} \ot (I_{E_{j}}\ot P_{\mathcal{K}})\widetilde{V}^{(j)*}P_{\mathcal{K}^{\perp}}\widetilde{V}_{\mathbf{\hat{n}}_{j}}(I_{\mathbb{E}{(\mathbf{\hat{n}}_{j})}} \ot P_{\mathcal{K}}))\\&=(I_{E_j}\ot (I_{\mathbb{E}{(\mathbf{\hat{m}_i})}} \ot P_{\mathcal{K}}) \widetilde{V}_{\mathbf{\hat{m}}_{i}}^{*})X_{i,j}(I_{E_i} \ot \widetilde{V}_{\mathbf{\hat{n}}_{j}} (I_{\mathbb{E}{(\mathbf{\hat{n}}_{j})}} \ot P_{\mathcal{K}})).
	\end{align*} Thus, we get (1).
	
	(2) Using hypothesis ({\ref{hyp}}), we have \begin{align*}&
		0=(I_{E_{j}} \ot ((I_{E_{i}}\ot P_{\mathcal{K}})- \wT^{(i) *}\wT^{(i)}))(t_{i,j}\ot I_{\mathcal{H}_{V}})(I_{E_i} \ot[\wT^{(j)}, \wT_{\mathbf{\hat{n}}_{j}}^{*}]^{*} )\\&=(I_{E_{j}} \ot (I_{E_{i}}\ot P_{\mathcal{K}})\widetilde{V}^{(i) *}P_{\mathcal{K}^{\perp}}\widetilde{V}^{(i)}(I_{E_{i}}\ot P_{\mathcal{K}}))(t_{i,j}\ot I_{\mathcal{H}_{V}})\\&\:\:\:\:\:\:\:(I_{E_i} \ot (I_{E_{j}}\ot P_{\mathcal{K}})\widetilde{V}^{(j)*}P_{\mathcal{K}^{\perp}}\widetilde{V}_{\mathbf{\hat{n}}_{j}}(I_{\mathbb{E}{(\mathbf{\hat{n}}_{j})}} \ot P_{\mathcal{K}}))\\&= (I_{E_j}\ot( I_{E_{i}} \ot P_{\mathcal{K}}) \widetilde{V}^{(i)*})X_{i,j}(I_{E_i} \ot \widetilde{V}_{\mathbf{\hat{n}}_{j}}(I_{\mathbb{E}{(\mathbf{\hat{n}}_j})} \ot P_{\mathcal{K}})),
	\end{align*} where the previous equality follows from Lemma \ref{2.2}. Hence we proved (2).

	(3) Similarly, we can prove (3) as we proved (2). 
\end{proof}

For the proof of the main theorem, first, we show that $ \mathcal{K}$ reduces $\widetilde{V}^{(i)*}P_{\mathcal{K}^{\perp}}\widetilde{V}^{(i)},$ for each $ i \in I_{k}.$ For this it is enough to show that \begin{equation}\label{2.5}
	(I_{E_{i}}\ot P_{\mathcal{K}})(\widetilde{V}^{(i)*}P_{\mathcal{K}^{\perp}}\widetilde{V}^{(i)})=(\widetilde{V}^{(i)*}P_{\mathcal{K}^{\perp}}\widetilde{V}^{(i)})(I_{E_{i}}\ot P_{\mathcal{K}}), \:\:\:  i\in I_{k}.
\end{equation} 
Consider \begin{align*}
	(I_{E_{i}}\ot P_{\mathcal{K}})\widetilde{V}^{(i) *}P_{\mathcal{K}^{\perp}}\widetilde{V}^{(i)}(I_{E_{i}}\ot P_{\mathcal{K}})&=\widetilde{V}^{(i) *}P_{\mathcal{K}^{\perp}}\widetilde{V}^{(i)}(I_{E_{i}}\ot P_{\mathcal{K}})\\&\:\:\:\:\:\:\:-(I_{E_{i}}\ot  P_{\mathcal{K}^{\perp}})\widetilde{V}^{(i) *}P_{\mathcal{K}^{\perp}}\widetilde{V}^{(i)}(I_{E_{i}}\ot P_{\mathcal{K}})\\&=\widetilde{V}^{(i) *}P_{\mathcal{K}^{\perp}}\widetilde{V}^{(i)}(I_{E_{i}}\ot P_{\mathcal{K}})\\&\:\:\:\:\:\:\:-(I_{E_{i}}\ot  P_{\mathcal{K}^{\perp}})\widetilde{V}^{(i) *}\widetilde{V}^{(i)}(I_{E_{i}}\ot P_{\mathcal{K}})\\&=\widetilde{V}^{(i) *}P_{\mathcal{K}^{\perp}}\widetilde{V}^{(i)}(I_{E_{i}}\ot P_{\mathcal{K}}),
\end{align*} as $(I_{E_{i}}\ot  P_{\mathcal{K}^{\perp}})\widetilde{V}^{(i) *}P_{\mathcal{K}^{\perp}}= (I_{E_{i}}\ot  P_{\mathcal{K}^{\perp}})\widetilde{V}^{(i) *}.$  After taking adjoint both sides, we obtain \begin{equation*}\label{2.4}
	(I_{E_{i}}\ot P_{\mathcal{K}})\widetilde{V}^{(i) *}P_{\mathcal{K}^{\perp}}\widetilde{V}^{(i)}(I_{E_{i}}\ot P_{\mathcal{K}})=(I_{E_{i}}\ot P_{\mathcal{K}})\widetilde{V}^{(i) *}P_{\mathcal{K}^{\perp}}\widetilde{V}^{(i)},
\end{equation*}thus $ \mathcal{K}$ reduces $\widetilde{V}^{(i)*}P_{\mathcal{K}^{\perp}}\widetilde{V}^{(i)},$ for each $ i \in I_{k}.$

We are now prepared to begin the main section of the proof of Theorem \ref{Beurlin}.

\begin{proof}[Proof of Theorem \ref{Beurlin}]
	First assume  $\mathcal{K}$ to be a BQS of $\mathcal{H}_{V}.$  Therefore by using Theorem \ref{MT5} there exist a Hilbert space $\mathcal{H}_W,$ a pure DCI-representation  $(\pi, W^{(1)}, \dots, W^{(k)})$ of $\mathbb{E}$ on  $\mathcal{H}_W$ and an isometric multi-analytic operator $M_\Theta:{\mathcal{H}_W} \to \mathcal{H}_{V}$ such that
	$$\mathcal{K}=\mathcal{H}_{V}\ominus M_{\Theta}\mathcal{H}_W,$$
	where $\mathcal{W}_{\mathcal{H}_W}$ is the generating wandering subspace for $(\pi, W^{(1)}, \dots, W^{(k)}).$  
	Then $P_{\mathcal{K}^{\perp}}=M_{\Theta}M_{\Theta}^{*}.$ Using Equations (\ref{positive}) and (\ref{2.5}) we get the following equality \begin{align}\label{2.6}
		(I_{E_{i}}\ot P_{\mathcal{K}})- \wT^{(i) *}\wT^{(i)}&=(I_{E_{i}}\ot P_{\mathcal{K}})\widetilde{V}^{(i) *}P_{\mathcal{K}^{\perp}}\widetilde{V}^{(i)}(I_{E_{i}}\ot P_{\mathcal{K}})\nonumber\\&= (\widetilde{V}^{(i)*}P_{\mathcal{K}^{\perp}}\widetilde{V}^{(i)})(I_{E_{i}}\ot P_{\mathcal{K}}),
	\end{align} where $ i \in I_{k}.$ Note that  \begin{align*}&
		(I_{E_{j}}\ot ( (I_{E_{i}}\ot P_{\mathcal{K}})- \wT^{(i) *}\wT^{(i)}))(t_{i,j} \ot I_{\mathcal{H}_{V}})(I_{E_{i}}\ot ( (I_{E_{j}}\ot P_{\mathcal{K}})- \wT^{(j) *}\wT^{(j)}))\\&=(I_{E_{j}}\ot  \widetilde{V}^{(i)*}P_{\mathcal{K}^{\perp}}\widetilde{V}^{(i)}(I_{E_{i}}\ot P_{\mathcal{K}}))(t_{i,j} \ot I_{\mathcal{H}_{V}})(I_{E_{i}}\ot \widetilde{V}^{(j)*}P_{\mathcal{K}^{\perp}}\widetilde{V}^{(j)}(I_{E_{j}}\ot P_{\mathcal{K}}))\\&=(I_{E_{j}}\ot\widetilde{V}^{(i)*} P_{\mathcal{K}^{\perp}}\widetilde{V}^{(i)})(I_{E_{j}} \ot I_{E_{i}}\ot P_{\mathcal{K}})(t_{i,j} \ot I_{\mathcal{H}_{V}})\\&\:\:\:\:\:\:\:(I_{E_{i}}\ot \widetilde{V}^{(j)*}P_{\mathcal{K}^{\perp}}\widetilde{V}^{(j)}(I_{E_{j}}\ot P_{\mathcal{K}}))\\& =(I_{E_{j}}\ot\widetilde{V}^{(i)*} P_{\mathcal{K}^{\perp}}\widetilde{V}^{(i)})(t_{j,i} \ot I_{\mathcal{H}_{V}})(I_{E_{i}}\ot (I_{E_{j}}\ot P_{\mathcal{K}})  \widetilde{V}^{(j)*}P_{\mathcal{K}^{\perp}}\widetilde{V}^{(j)}(I_{E_{j}}\ot P_{\mathcal{K}}))\\&=(I_{E_{j}}\ot\widetilde{V}^{(i)*} P_{\mathcal{K}^{\perp}}\widetilde{V}^{(i)})(t_{j,i} \ot I_{\mathcal{H}_{V}})(I_{E_{i}}\ot \widetilde{V}^{(j)*}P_{\mathcal{K}^{\perp}}\widetilde{V}^{(j)}(I_{E_{j}}\ot P_{\mathcal{K}})),
	\end{align*} here the last inequality follows by (\ref{2.5}).
	Since ${\Theta}$ is an inner operator, $M_{\Theta}$ is an isometric multi-analytic, i.e., $M_{\Theta}^{*}M_{\Theta}=I_{\mathcal{H}_{W}} $ and \begin{align}\label{multi}
		M_{\Theta}W^{(i)}(\xi_i)h=V^{(i)}(\xi_i)M_{\Theta}h \hspace{0.7cm}\mbox{and} \hspace{0.7cm}M_{\Theta} \pi(a)h= \sigma(a)M_{\Theta}h,
	\end{align}
	where $\xi_i \in E_i, \: h \in \mathcal{H}_{W},$ $ a \in \mathcal{B}$ and $i \in I_{k}.$  Equation (\ref{multi}) provides that \begin{equation}\label{2.7}
		(I_{E_{j}} \ot M_{\Theta}^{*})\widetilde{V}^{(j)*}\widetilde{V}^{(i)}(I_{E_{i}}\ot M_{\Theta})=\widetilde{W}^{(j)*}\widetilde{W}^{(i)}.
	\end{equation}
	Now using  $P_{\mathcal{K}^{\perp}}=M_{\Theta}M_{\Theta}^{*}$, Equations (\ref{2.6}) and (\ref{2.7}), we obtain  \begin{align*}&
		(I_{E_{j}}\ot ( (I_{E_{i}}\ot P_{\mathcal{K}})- \wT^{(i) *}\wT^{(i)}))(t_{i,j} \ot I_{\mathcal{H}_{V}})(I_{E_{i}}\ot ( (I_{E_{j}}\ot P_{\mathcal{K}})- \wT^{(j) *}\wT^{(j)}))\\&=(I_{E_{j}}\ot\widetilde{V}^{(i)*} P_{\mathcal{K}^{\perp}}\widetilde{V}^{(i)})(t_{j,i} \ot I_{\mathcal{H}_{V}})(I_{E_{i}}\ot \widetilde{V}^{(j)*}P_{\mathcal{K}^{\perp}}\widetilde{V}^{(j)}(I_{E_{j}}\ot P_{\mathcal{K}}))\\&=(I_{E_{j}}\ot\widetilde{V}^{(i)*} M_{\Theta}M_{\Theta}^{*})\widetilde{V}^{(j)*}\widetilde{V}^{(i)}(I_{E_{i}}\ot M_{\Theta}M_{\Theta}^{*}\widetilde{V}^{(j)}( I_{E_{j}}\ot P_{\mathcal{K}}))\\&=(I_{E_{j}}\ot\widetilde{V}^{(i)*} M_{\Theta})\widetilde{W}^{(j)*}\widetilde{W}^{(i)}(I_{E_{i}}\ot M_{\Theta}^{*}\widetilde{V}^{(j)} (I_{E_{j}}\ot P_{\mathcal{K})})\\&=(I_{E_{j}}\ot\widetilde{V}^{(i)*} M_{\Theta})(I_{E_{j}} \ot \widetilde{W}^{(i)})(t_{j,i}\ot I_{\mathcal{H}_{W}})(I_{E_{i}}\ot\widetilde{W}^{(j)*})\\&\:\:\:\:\:\:\:(I_{E_{i}}\ot M_{\Theta}^{*}\widetilde{V}^{(j)} (I_{E_{j}}\ot P_{\mathcal{K}}))\\&=(I_{E_{j}}\ot\widetilde{V}^{(i)*})(I_{E_{j}}\ot M_{\Theta} \widetilde{W}^{(i)})(t_{j,i}\ot I_{\mathcal{H}_{W}})(I_{E_{i}}\ot\widetilde{W}^{(j)*} M_{\Theta}^{*})\\&\:\:\:\:\:\:\:(I_{E_{i}}\ot\widetilde{V}^{(j)}(I_{E_{j}}\ot P_{\mathcal{K}}))\\&=(I_{E_{j}}\ot I_{E_{i}}\ot M_{\Theta})(t_{j,i}\ot I_{\mathcal{H}_{W}})(I_{E_{i}}\ot(I_{E_{j}}\ot M_{\Theta}^{*})(I_{E_{j}}\ot P_{\mathcal{K}}))\\&=(t_{i,j}\ot I_{\mathcal{H}_{W}})(I_{E_{i}}\ot I_{E_{j}}\ot M_{\Theta} M_{\Theta}^{*})(I_{E_{i}}\ot I_{E_{j}}\ot P_{\mathcal{K}})=0.
	\end{align*}

	For the converse part, assume \begin{equation*}
		(I_{E_{j}}\ot ( (I_{E_{i}}\ot P_{\mathcal{K}})- \wT^{(i) *}\wT^{(i)}))(t_{i,j} \ot I_{\mathcal{ K}})(I_{E_{i}}\ot ( (I_{E_{j}}\ot P_{\mathcal{K}})- \wT^{(j) *}\wT^{(j)}))=0.
	\end{equation*} We have to show that  $\mathcal{K}$ is a  BQS of $\mathcal{H}_{V}$. For this, we need to show $X_{i,j}=0$ (using Lemma \ref{Lemma 2.1}). Define $\mathcal{L}:=\displaystyle\bigvee_{\mathbf{n} \in \mathbb{N}^k_0}\widetilde{V}_{\mathbf{n}}(\mathbb{E}(\mathbf{n})\ot \mathcal{K})$. First, we observe that $\mathcal{L}$ is a $(\sigma, V^{(1)}, \dots, V^{(k)})-$reducing subspace of $\mathcal{H}_{V}$. Indeed, one can easily see that $\mathcal{L}$ is a $(\sigma, V^{(1)}, \dots, V^{(k)})-$IS.  Now for reducing, only we have to show that $\widetilde{V}^{(i)*}\widetilde{V}_{\mathbf{n}}(\mathbb{E}(\mathbf{n})\ot \mathcal{K}) \subseteq  E_{i} \ot \mathcal{L}.$ Consider \begin{align*}
		\widetilde{V}^{(i)*}\widetilde{V}_{\mathbf{n}}(\mathbb{E}(\mathbf{n})\ot \mathcal{K})&=\widetilde{V}^{(i)*}\widetilde{V}^{(i)}_{n_{i}}(I_{E_{i}^{\ot n_i}}\ot \widetilde{V}_{\mathbf{\hat n}_{i}})(\mathbb{E}(\mathbf{n})\ot \mathcal{K})\\&=(I_{E_{i}} \ot \widetilde{V}^{(i)}_{n_{i}})(t_{i,n_{i}}\ot I_{\mathcal{H}_{V}})\\&\:\:\:\:\:\:\:(I_{E_{i}^{\ot n_i}}\ot \widetilde{V}^{(i)*})(I_{E_{i}^{\ot n_i}}\ot \widetilde{V}_{\mathbf{\hat{n}}_{i}})(\mathbb{E}(\mathbf{n})\ot \mathcal{K}) \\&\subseteq (I_{E_{i}} \ot \widetilde{V}^{(i)}_{n_{i}})(E_{i} \ot E_{i}^{\ot n_{i}}\ot \mathcal{K})\subseteq  E_{i} \ot \mathcal{L},
	\end{align*}  where $\mathbf{e}_{i}$ is multi-indices with $ 1$ in the $i^{th}$ place and zero elsewhere. Hence $\mathcal{L}$ reduces $(\sigma, V^{(1)}, \dots, V^{(k)}).$  Similarly $\mathcal{L}^{\perp}$ is also reducing subspace for\\ $(\sigma, V^{(1)}, \dots, V^{(k)}).$

	Therefore $(\sigma, V^{(1)}, \dots, V^{(k)})|_{\mathcal{L}}$ and $(\sigma, V^{(1)}, \dots, V^{(k)})|_{\mathcal{L}^{\perp}}$ are  pure DCI-representations of $\mathbb{E}$ on ${\mathcal{L}}$ and ${\mathcal{L}^{\perp}}, $ respectively. Then using \cite[Corollary 3.5]{HV21} there exist wandering subspaces $\mathcal{W}$ of $\mathcal{L}$ and  $\mathcal{W}_{0}$ of $\mathcal{L}^{\perp}$ such that \begin{equation*}
		\mathcal{L}=\bigoplus_{\mathbf{n} \in \mathbb{N}^k_0}\widetilde{V}_{\mathbf{n}}(\mathbb{E}(\mathbf{n})\ot \mathcal{W})\:\:\text{and}\:\: \mathcal{L}^{\perp}=\bigoplus_{\mathbf{n} \in \mathbb{N}^k_0}\widetilde{V}_{\mathbf{n}}(\mathbb{E}(\mathbf{n})\ot \mathcal{W}_{0}).
	\end{equation*} Note that \begin{align*}
		\bigoplus_{\mathbf{n} \in \mathbb{N}^k_0}\widetilde{V}_{\mathbf{n}}(\mathbb{E}(\mathbf{n})\ot \mathcal{W}_{\mathcal{H}_{V}})=\mathcal{H}_{V}=\mathcal{L}\oplus\mathcal{L}^{\perp}=\bigoplus_{\mathbf{n} \in \mathbb{N}^k_0}\widetilde{V}_{\mathbf{n}}(\mathbb{E}(\mathbf{n})\ot (\mathcal{W} \oplus \mathcal{W}_{0})).
	\end{align*} Thus, by the uniqueness of the generating wandering subspace, we get $\mathcal{W} \oplus \mathcal{W}_{0}=\mathcal{W}_{\mathcal{ H}_{V}}.$ Since $\mathcal{K}\subseteq\mathcal{L},$ $\mathcal{L}^{\perp}=\bigoplus_{\mathbf{n} \in \mathbb{N}^k_{0}}\widetilde{V}_{\mathbf{n}}(\mathbb{E}(\mathbf{n})\ot \mathcal{W}_{0})\subseteq \mathcal{K}^{\perp}.$ Hence, for each $ h \in \mathcal{L}^{\perp}$ \begin{equation*}
		(I_{E_{i}} \ot P_{\mathcal{K}})\widetilde{V}^{(i)*}P_{\mathcal{K}^{\perp}}h=(I_{E_{i}} \ot P_{\mathcal{K}})\widetilde{V}^{(i)*}h=0,
	\end{equation*} as $ \mathcal{L}^{\perp}$ reduces $(\sigma, V^{(1)}, \dots, V^{(k)}).$  This proves that \begin{equation*}
		X_{i,j}|_{E_{i} \ot {\mathcal{L}^{\perp}}}=0.
	\end{equation*} 
	
	Now we only need to show $X_{i,j}|_{E_{i} \ot {\mathcal{L}}}=0,$ which is equivalent to show $X_{i,j}(I_{E_{i}} \ot \widetilde{V}_{\mathbf{n}}(I_{\mathbb{E}({\mathbf{n}})}\ot P_{\mathcal{K}}))=0$
	for $ \mathbf{n}\in \mathbb{N}^k_{0}.$ Suppose $\mathbf{n}= \mathbf0$, then $X_{i,j}(I_{E_{i}} \ot \widetilde{V}_{\mathbf{n}}(I_{\mathbb{E}({\mathbf{n}})}\ot P_{\mathcal{K}}))=X_{i,j}(I_{E_{i}} \ot P_{\mathcal{K}})=0$ as $X_{i,j}=(I_{E_{j}} \ot P_{ \mathcal{K}^{\perp}})(I_{E_{j}}\ot \widetilde{V}^{(i)})(t_{i,j} \ot P_{\mathcal{K}})(I_{E_{i}}\ot \widetilde{V}^{(j)*})(I_{E_{i}}\ot P_{\mathcal{K}^\perp}).$ Hence we have to prove $X_{i,j}(I_{E_{i}} \ot \widetilde{V}_{\mathbf{n}}(I_{\mathbb{E}({\mathbf{n}})}\ot P_{\mathcal{K}}))=0 $ only for  $ \mathbf{n}\in \mathbb{N}^k_{0}\setminus\{\mathbf{0}\}.$ Furthermore for each $ h\in \mathcal{K}$, $ h_{0} \in \mathcal{L}^{\perp},$ $\xi_{i}\in E_{i} $ and $\eta_{\mathbf{n}}\in \mathbb{E}(\mathbf{n}), $ we have \begin{align*}&
		\langle X_{i,j}(I_{E_{i}} \ot \widetilde{V}_{\mathbf{n}})(\xi_{i}\ot \eta_{\mathbf{n}}\ot h), \xi_{j} \ot h_{0} \rangle\\&=\langle (I_{E_{j}} \ot P_{ \mathcal{K}^{\perp}} \widetilde{V}^{(i)})(t_{i,j} \ot P_{\mathcal{K}})(I_{E_{i}}\ot \widetilde{V}^{(j)*} P_{\mathcal{K}^\perp} \widetilde{V}_{\mathbf{n}})(\xi_{i}\ot \eta_{\mathbf{n}}\ot h),\xi_{j} \ot h_{0} \rangle\\&=\langle (t_{i,j} \ot P_{\mathcal{K}})(I_{E_{i}}\ot \widetilde{V}^{(j)*})(I_{E_{i}}\ot P_{\mathcal{K}^\perp})(I_{E_{i}} \ot \widetilde{V}_{\mathbf{n}})(\xi_{i}\ot \eta_{\mathbf{n}}\ot h)\\&\:\:\:\:\:\:\:,(I_{E_{j}}\ot \widetilde{V}^{(i)*}) \xi_{j} \ot h_{0}\rangle=0.
	\end{align*}  Thus $X_{i,j}(I_{E_{i}} \ot \widetilde{V}_{\mathbf{n}})(E_{i} \ot \mathbb{E}(\mathbf{n}) \ot \mathcal{K}) \perp E_{j} \ot \mathcal{L}^{\perp}.$ Therefore it suffices to show that \begin{equation*}
		X_{i,j}(I_{E_{i}} \ot \widetilde{V}_{\mathbf{n}})(E_{i} \ot \mathbb{E}(\mathbf{n}) \ot \mathcal{K}) \perp E_{j} \ot \mathcal{L},   
	\end{equation*} which is equivalent to prove that  \begin{equation*}
		X_{i,j}(I_{E_{i}} \ot \widetilde{V}_{\mathbf{n}})(E_{i} \ot \mathbb{E}(\mathbf{n}) \ot \mathcal{K}) \perp E_{j} \ot \widetilde{V}_{\mathbf{m}}(\mathbb{E}(\mathbf{m})\ot \mathcal{K}),\:\:\mathbf{m},\mathbf{n}\in \mathbb{N}^k_{0}\setminus\{\mathbf{0}\},
	\end{equation*}  as $\text{ran}  X_{i,j}\subseteq E_{j} \ot \mathcal{ K}^{\perp},$  the above equation is obvious  for $ \mathbf{m}= \mathbf0.$ For each $ h\in \mathcal{K}$, $\xi_{i}\in E_{i}$  and $\eta_{\mathbf{n}}\in \mathbb{E}(\mathbf{n}), $ \begin{align*}&
		\langle X_{i,j}(I_{E_{i}} \ot \widetilde{V}_{\mathbf{n}}(I_{\mathbb{E}({\mathbf{n}})}\ot P_{\mathcal{K}}))(\xi_{i}\ot \eta_{\mathbf{n}}\ot h),  I_{E_{j}} \ot \widetilde{V}_{\mathbf{m}}(I_{\mathbb{E}({\mathbf{m}})}\ot P_{\mathcal{K}})(\xi_{j}\ot \eta_{\mathbf{m}}\ot h)\rangle\\&=\langle (I_{E_{j}} \ot (I_{\mathbb{E}({\mathbf{m}})} \ot P_{\mathcal{K}}) \widetilde{V}_{\mathbf{m}}^{*}) X_{i,j}(I_{E_{i}} \ot \widetilde{V}_{\mathbf{n}}(I_{\mathbb{E}({\mathbf{n}})}\ot P_{\mathcal{K}}))(\xi_{i}\ot \eta_{\mathbf{n}}\ot h)\\&\:\:\:\:\:\:\:,  (\xi_{j}\ot \eta_{\mathbf{m}}\ot h)\rangle
	\end{align*} and hence we only need to show that \begin{equation}\label{main}
		(I_{E_{j}} \ot (I_{\mathbb{E}({\mathbf{m}})} \ot P_{\mathcal{K}}) \widetilde{V}_{\mathbf{m}}^{*}) X_{i,j}(I_{E_{i}} \ot \widetilde{V}_{\mathbf{n}}(I_{\mathbb{E}({\mathbf{n}})}\ot P_{\mathcal{K}}))=0.
	\end{equation} We start by showing the following equation \begin{equation}\label{main2}
		(I_{E_{j}}\ot\widetilde{V}^{(i)*})X_{i,j}(I_{E_{i}}\ot \widetilde{V}^{(j)})=0.
	\end{equation}By using hypothesis and Equation (\ref{positive}) we have 
	\begin{align*}&
		0= (I_{E_{j}}\ot ( (I_{E_{i}}\ot P_{\mathcal{K}})- \wT^{(i) *}\wT^{(i)}))(t_{i,j} \ot I_{\mathcal{H}_{V}})(I_{E_{i}}\ot ( (I_{E_{j}}\ot P_{\mathcal{K}})- \wT^{(j) *}\wT^{(j)}))\\&=(I_{E_{j}}\ot  (I_{E_{i}}\ot P_{\mathcal{K}})\widetilde{V}^{(i)*}P_{\mathcal{K}^{\perp}}\widetilde{V}^{(i)}(I_{E_{i}}\ot P_{\mathcal{K}}))(t_{i,j} \ot I_{\mathcal{H}_{V}})\\&\:\:\:\:\:\:\:(I_{E_{i}}\ot (I_{E_{j}}\ot P_{\mathcal{K}})\widetilde{V}^{(j)*}P_{\mathcal{K}^{\perp}}\widetilde{V}^{(j)}(I_{E_{j}}\ot P_{\mathcal{K}}))\\&=(I_{E_{j}}\ot  (I_{E_{i}}\ot P_{\mathcal{K}})\widetilde{V}^{(i)*}P_{\mathcal{K}^{\perp}}\widetilde{V}^{(i)})(t_{i,j} \ot I_{\mathcal{H}_{V}})\\&\:\:\:\:\:\:\:(I_{E_{i}}\ot (I_{E_{j}}\ot P_{\mathcal{K}})\widetilde{V}^{(j)*}P_{\mathcal{K}^{\perp}}\widetilde{V}^{(j)}(I_{E_{j}}\ot P_{\mathcal{K}}))\\&=(I_{E_{j}}\ot  \widetilde{V}^{(i)*}P_{\mathcal{K}^{\perp}}\widetilde{V}^{(i)}(I_{E_{i}}\ot P_{\mathcal{K}}))(t_{i,j} \ot I_{\mathcal{H}_{V}})(I_{E_{i}}\ot (I_{E_{j}}\ot P_{\mathcal{K}})\widetilde{V}^{(j)*}P_{\mathcal{K}^{\perp}}\widetilde{V}^{(j)})\\&=(I_{E_{j}}\ot  \widetilde{V}^{(i)*}P_{\mathcal{K}^{\perp}}\widetilde{V}^{(i)})(t_{i,j} \ot P_{\mathcal{K}})(I_{E_{i}}\ot \widetilde{V}^{(j)*}P_{\mathcal{K}^{\perp}}\widetilde{V}^{(j)})\\&= (I_{E_{j}}\ot\widetilde{V}^{(i)*})X_{i,j}(I_{E_{i}}\ot \widetilde{V}^{(j)}).
	\end{align*} Now for  $ \mathbf{m}, \mathbf{n}\in \mathbb{N}^k_{0}\setminus\{\mathbf{0}\}$ with $m_{i},n_{j}\neq0$ and using $\widetilde{V}_{\mathbf{m}}=\widetilde{V}^{(i)}(I_{E_{i}} \ot \widetilde{V}_{{m_{i}\mathbf{e}_{i}-\mathbf{e}_{i}}})(I_{\mathbb{E}({m_{i}\mathbf{e}_i})}\ot \widetilde{V}_{\mathbf{\hat{m}}_{i}})$ and  $\widetilde{V}_{\mathbf{n}}=\widetilde{V}^{(j)}(I_{E_{j}} \ot \widetilde{V}_{{n_{j}\mathbf{e}_{j}-\mathbf{e}_{j}}})(I_{\mathbb{E}({n_{j}\mathbf{e}_{j}})}\ot \widetilde{V}_{\mathbf{\hat{n}}_{j}})$ in the left hand side of the Equation (\ref{main}), we obtain \begin{align*}&
		(I_{E_{j}} \ot ( I_{\mathbb{E}({\mathbf{m}})} \ot P_{\mathcal{K}}) \widetilde{V}_{\mathbf{m}}^{*}) X_{i,j}(I_{E_{i}} \ot \widetilde{V}_{\mathbf{n}}(I_{\mathbb{E}({\mathbf{n}})}\ot P_{\mathcal{K}}))\\&=(I_{E_{j}} \ot (I_{\mathbb{E}({\mathbf{m}})} \ot P_{\mathcal{K}})(I_{E({m_{i}\mathbf{e}_i})}\ot \widetilde{V}_{\mathbf{\hat{m}}_{i}} ^{*})(I_{E_{i}} \ot \widetilde{V}_{{m_{i}\mathbf{e}_{i}-\mathbf{e}_{i}}}^{*})\widetilde{V}^{(i)*})X_{i,j}\\&\:\:\:\:\:\:\:\:\:\:\:\:\:\:(I_{E_{i}} \ot\widetilde{V}^{(j)}(I_{E_{j}} \ot \widetilde{V}_{{n_{j}\mathbf{e}_{j}-\mathbf{e}_{j}}})(I_{\mathbb{E}({n_{j}\mathbf{e}_{j}})}\ot \widetilde{V}_{\mathbf{\hat{n}}_{j}})(I_{\mathbb{E}({\mathbf{n}})}\ot P_{\mathcal{K}}))=0,
	\end{align*} here the last equality follows by the Equation (\ref{main2}). Now we will discuss the remaining cases : $ \mathbf{m}, \mathbf{n}\in \mathbb{N}^k_{0}\setminus\{\mathbf{0}\}$ where 
	\begin{enumerate}
		\item $m_{i}=n_{j}=0$, 
		\item $m_{i}\neq 0$ and $n_{j}=0,$ 
		\item $m_{i}= 0$, $n_{j}\neq 0$. 
	\end{enumerate} 
	In the first case, when we substitute these values on the left-hand side of the Equation (\ref{main}), we get \begin{align*}&(I_{E_{j}} \ot I_{\mathbb{E}({\mathbf{m}})} \ot P_{\mathcal{K}})(I_{E_{j}} \ot \widetilde{V}_{\mathbf{m}}^{*}) X_{i,j}(I_{E_{i}} \ot \widetilde{V}_{\mathbf{n}}(I_{\mathbb{E}({\mathbf{n}})}\ot P_{\mathcal{K}}))\\&=
		(I_{E_{j}} \ot I_{\mathbb{E}({\mathbf{\hat{m}}_{i}})} \ot P_{\mathcal{K}})(I_{E_{j}} \ot \widetilde{V}_{\mathbf{\hat{m}}_{i}}^{*}) X_{i,j}(I_{E_{i}} \ot \widetilde{V}_{\mathbf{\hat{n}}_{j}}(I_{\mathbb{E}({\mathbf{\hat{n}}_{j}})}\ot P_{\mathcal{K}}))=0,
	\end{align*} by using first part of Lemma \ref{lemma 2.4}.
	
	For the (2), substitute $\widetilde{V}_{\mathbf{m}}=\widetilde{V}^{(i)}(I_{E_{i}} \ot \widetilde{V}_{\mathbf{m}-\mathbf{e}_{i}})$,  $m_{i}\neq 0$ and $n_{j}=0$ in left hand side of Equation (\ref{main}), we get \begin{align*}&
		(I_{E_{j}} \ot I_{\mathbb{E}({\mathbf{m}})} \ot P_{\mathcal{K}} (I_{E_{i}} \ot \widetilde{V}_{\mathbf{m}-\mathbf{e}_{i}}^{*})\widetilde{V}^{(i)*}) X_{i,j}(I_{E_{i}} \ot \widetilde{V}_{\mathbf{\hat{n}}_{i}}(I_{\mathbb{E}({\mathbf{\hat{n}}_{i}})}\ot P_{\mathcal{K}}))\\&=(I_{E_{j}} \ot I_{\mathbb{E}({\mathbf{m}})} \ot P_{\mathcal{K}}(I_{E_{i}} \ot \widetilde{V}_{\mathbf{m}-\mathbf{e}_{i}}^{*}) (I_{E_{i}}\ot P_{\mathcal{K}})\widetilde{V}^{(i)*}) X_{i,j}(I_{E_{i}} \ot \widetilde{V}_{\mathbf{\hat{n}}_{i}}(I_{\mathbb{E}({\mathbf{\hat{n}}_{i}})}\ot P_{\mathcal{K}}))\\&=0,
	\end{align*} by using second part of Lemma \ref{lemma 2.4}. 
	
	We can easily demonstrate the Equation (\ref{main}) for $m_{i}= 0$ and $n_{i}\neq 0$ by using the same method as in (2) and the third part of Lemma \ref{lemma 2.4}. Hence $\mathcal{K}$ is a  BQS of $\mathcal{H}_{V}.$ This completes the proof. 
\end{proof}

\section{Model theory and concrete regular dilation for Brehmer-Solel tuples}\label{3}

Consider the following natural question: Which type of covariant representation $(\sigma, V)$ of $E$ on $\mathcal{H}_{V}$ is unitarily equivalent to the restriction of the induced representation  $(\rho, S)$ of $E$ to some QS of $\mathcal{F}(E)\otimes_{\pi}\mathcal{H}$? The following Sz. Nagy-Foias type dilation which is based on \cite[Theorem 3]{AC14} and \cite[Proposition 10, Lemma 11]{MS98,MS09} gives answer to the above question:
\begin{theorem}[Muhly-Solel]\label{dilation}
	Let $(\sigma, V)$  be a pure, completely contractive covariant representation of $E$ on $\mathcal{H}_{V}.$ Then $(\sigma, V)$ is unitarily equivalent to  the  restriction of the induced representation $(\rho, S)$ to $\Pi_V \mathcal{H}_{V},$ a QS of $\mathcal{F}(E)\otimes_{\pi}\mathcal{H}$, where $\Pi_V:\mathcal{H}_{V}\to \mathcal{F}(E)\otimes_{\pi}\mathcal{H}$, is an isometry.
\end{theorem}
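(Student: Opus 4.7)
The plan is to construct a Poisson-type embedding $\Pi_V \colon \mathcal{H}_{V} \to \mathcal{F}(E) \otimes_{\pi} \mathcal{H}$ and verify that $\Pi_V \mathcal{H}_{V}$ is a co-invariant subspace for the induced representation, so that $(\sigma, V)$ is unitarily equivalent to the compression of $(\rho, S)$ to $\Pi_V \mathcal{H}_V$. First I would manufacture the auxiliary Hilbert space: set $\Delta_{*} := (I_{\mathcal{H}_V} - \widetilde{V}\widetilde{V}^{*})^{1/2}$, which is meaningful because $\widetilde{V}$ is a contraction. A short calculation using the covariance of $V$ gives $\widetilde{V}\widetilde{V}^{*}\sigma(a) = \sigma(a)\widetilde{V}\widetilde{V}^{*}$ for every $a \in \mathcal{B}$, so $\Delta_{*}$ commutes with $\sigma(\mathcal{B})$ and $\mathcal{H} := \overline{\Delta_{*}\mathcal{H}_V}$ is $\sigma$-reducing; set $\pi := \sigma|_{\mathcal{H}}$. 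The same commutation ensures that each $I_{E^{\otimes n}} \otimes \Delta_{*}$ descends to a well-defined operator from $E^{\otimes n} \otimes_{\sigma} \mathcal{H}_V$ to $E^{\otimes n} \otimes_{\pi} \mathcal{H}$.

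Next I would define
\[
\Pi_{V}h := \bigoplus_{n \geq 0}(I_{E^{\otimes n}} \otimes \Delta_{*})\widetilde{V}_n^{*}h, \qquad h \in \mathcal{H}_V,
\]
and prove it is an isometry. Using the elementary identity $\widetilde{V}_{n}(I_{E^{\otimes n}} \otimes \widetilde{V}) = \widetilde{V}_{n+1}$, one obtains
\[
\widetilde{V}_{n}(I_{E^{\otimes n}} \otimes \Delta_{*}^{2})\widetilde{V}_n^{*} = \widetilde{V}_n \widetilde{V}_n^{*} - \widetilde{V}_{n+1}\widetilde{V}_{n+1}^{*},
\]
so $\|\Pi_V h\|^{2}$ telescopes to $\lim_{N \to \infty} \langle (I - \widetilde{V}_{N+1}\widetilde{V}_{N+1}^{*})h, h \rangle = \|h\|^{2}$ by purity.

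The third step is to verify the intertwining relations
\[
\widetilde{S}^{*}\Pi_V = (I_{E} \otimes \Pi_V)\widetilde{V}^{*} \qquad \text{and} \qquad \Pi_V \sigma(a) = \rho(a)\Pi_V,
\]
both of which follow from the decomposition $\widetilde{V}_{n+1}^{*} = (I_{E} \otimes \widetilde{V}_n^{*})\widetilde{V}^{*}$ by comparing components after the identification $E \otimes E^{\otimes n} = E^{\otimes(n+1)}$. The first identity immediately shows $\widetilde{S}^{*}\Pi_V \mathcal{H}_V \subseteq E \otimes \Pi_V \mathcal{H}_V$, so $\Pi_V \mathcal{H}_V$ is a QS of $\mathcal{F}(E) \otimes_{\pi} \mathcal{H}$ in the sense of the paper; its adjoint form $\Pi_V^{*}\widetilde{S}(I_E \otimes \Pi_V) = \widetilde{V}$ delivers $V(\xi) = \Pi_V^{*} S(\xi) \Pi_V$, and together with $\Pi_V^{*}\rho(a)\Pi_V = \sigma(a)$ this exhibits $(\sigma,V)$ as unitarily equivalent to the compression of $(\rho,S)$ to $\Pi_V \mathcal{H}_V$.

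The main obstacle I expect is the balanced-tensor-product bookkeeping: one must check that $I_{E^{\otimes n}} \otimes \Delta_{*}$ is well defined between the correct balanced tensor products, and that the algebraic manipulations of $\widetilde{V}_n$ remain compatible with the $\mathcal{B}$-actions on $E^{\otimes n}$ and $\mathcal{H}_V$. The analytic input is mild, amounting only to the use of purity (namely, $\widetilde{V}_n \widetilde{V}_n^{*} \to 0$ strongly) to pass from the finite-stage telescoping identity to convergence of the Poisson series.
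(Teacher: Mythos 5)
Your construction is correct and is essentially the same route the paper takes: Theorem \ref{dilation} is quoted from Muhly--Solel, and the Poisson-kernel embedding $\Pi_V h=\bigoplus_n (I_{E^{\otimes n}}\otimes\Delta_*)\widetilde{V}_n^*h$ with the telescoping-plus-purity argument, the intertwining relations (\ref{dilate}), and the resulting co-invariance of $\Pi_V\mathcal{H}_V$ is exactly the construction the paper itself carries out in Section \ref{3} (there in the general $k$-variable Brehmer--Solel setting, of which your argument is the $k=1$ case, where the positivity hypothesis is automatic since $\widetilde{V}$ is a contraction).
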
 
Observe that from Theorem \ref{hs} it  follows that $\mathcal{K}=\Pi_V \mathcal{H}_{V}$ is a BQS of $\mathcal{F}(E)\ot \mathcal{H},$ therefore there exist a Hilbert space $\mathcal{H}_{T},$ a pure isometric representation $(\mu, T)$ of $E$ on $\mathcal{H}_{T}$ and an isometric multi-analytic operator $M_\Theta: { \mathcal{H}_{T}} \to \mathcal{F}(E)\ot \mathcal{H}$ such that $$\mathcal{K}=\mathcal{F}(E)\ot \mathcal{H}\ominus M_{\Theta} \mathcal{H}_{T}.$$ Hence $\Pi_{V}$ is  an isometry if and only if  $\lim_{n \rightarrow \infty}\|\widetilde{V}_{n}h\|^{2}$, i.e., $(\sigma,V)$ is pure. Since  $\mathcal{K}:=\Pi_V \mathcal{H}_{V}$ is IS under $\widetilde{S}^{*}$, $\mathcal{K}$ is a QS of $\mathcal{F}(E)\ot \mathcal{H}.$  Therefore $(\sigma, V)$ is isomorphic to  $(\rho', S')$ where $\rho'(a)=P_{\mathcal{ K}}\rho(a)|_{{\mathcal{K}}}$ and $S'(\xi)=P_{\mathcal{K}}{S}(\xi)|_{\mathcal{K}},$ for $a\in \mathcal{B}$ and $ \xi \in E,$ equivalently \begin{equation}\label{imp1}
	(\sigma, V)\cong(\rho', S').
\end{equation} In other words, we can say a pure, completely contractive covariant representation $(\sigma, V)$ of $E$ on $\mathcal{H}_{V}$ dilates to an induced representation $(\rho, S).$

In this section, based on \cite{AC14, S008,S009} we generalize the following one of the most concrete multivariable dilation results from \cite{C3, CV95} :

\begin{thm}\label{canonical dilation}
	If $T=(T_1,\ldots,T_n)$ is a Brehmer tuple on $\mathcal{ H}$, then $T$ dilates to $(M_{z_1}, \ldots, M_{z_n})$ on $H^2_{\mathcal{D}}(\mathbb{D}^n)$.
\end{thm}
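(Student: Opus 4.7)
The plan is to deduce this classical statement as a specialization of the concrete regular dilation theorem (to be established in this section) for pure completely contractive covariant representations satisfying the Brehmer--Solel condition. First I would set up the classical Brehmer tuple inside the paper's product-system framework by taking $\mathcal{B} = \mathbb{C}$ and $E_i = \mathbb{C}$ for each $i \in I_n$, with the canonical flips $t_{i,j}$ as the unitaries. A completely contractive covariant representation $(\sigma, V^{(1)}, \ldots, V^{(n)})$ of the resulting product system $\mathbb{E}$ on $\mathcal{H}$ is then determined by contractions $T_i := V^{(i)}(1)$, and the commutation relation $(\ref{rep})$ forces these contractions to commute with one another. A direct expansion shows that the Brehmer--Solel positivity condition $\Delta_{*}(V) \geq 0$ pulls back to exactly the classical Brehmer positivity condition $\sum_{S \subseteq I_n}(-1)^{|S|}T_S T_S^{*} \geq 0$ for the tuple $T$.

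Having made this identification, I would appeal to the concrete regular dilation theorem of this section (cf.\ the discussion culminating in $(\ref{imp})$), applied in the pure case, to produce a quotient subspace $\mathcal{K}$ of $\mathcal{F}(\mathbb{E}) \otimes \mathcal{H}$ such that $(\sigma, V^{(1)}, \ldots, V^{(n)})$ is unitarily equivalent to the compression of the induced representation $(\rho, S^{(1)}, \ldots, S^{(n)})$ to $\mathcal{K}$. Since each $E_i = \mathbb{C}$, every fibre $\mathbb{E}(\mathbf{n})$ is one-dimensional, so $\mathcal{F}(\mathbb{E}) \otimes \mathcal{H} \cong \ell^2(\mathbb{N}_0^n;\mathcal{H})$, which is canonically isomorphic to $H^2_{\mathcal{H}}(\mathbb{D}^n)$ via the monomial identification $e_{\mathbf{n}} \leftrightarrow z^{\mathbf{n}}$. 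Under this identification the induced creation operators $S^{(i)}$ become the multiplication operators $M_{z_i}$, and the ambient coefficient space $\mathcal{H}$ collapses to the defect space $\mathcal{D} := \overline{\Delta_T^{1/2}\mathcal{H}}$ once one replaces the coefficient space by the generating wandering subspace of $(\rho, S)$ produced by the dilation. Composing these identifications yields an isometric embedding of $\mathcal{H}$ as a $(\rho, S)$-quotient subspace of $H^2_{\mathcal{D}}(\mathbb{D}^n)$, giving the asserted dilation of $T$ to $(M_{z_1},\ldots,M_{z_n})$.

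The main obstacle is twofold. First, the statement as written does not explicitly impose purity, so one must either read it with purity as an implicit hypothesis or first peel off a non-pure (essentially unitary) summand by a Wold-type decomposition and dilate only the complementary pure summand by the machinery above. Second, the passage from the ambient Hilbert space $\mathcal{H}$ that appears in the induced representation on $\mathcal{F}(\mathbb{E}) \otimes \mathcal{H}$ down to the defect space $\mathcal{D} = \overline{\Delta_T^{1/2}\mathcal{H}}$ must be justified carefully: it rests on identifying the generating wandering subspace of $(\rho, S)$ with $\mathcal{D}$, and this is precisely the step where the Brehmer--Solel positivity $\Delta_{*}(V) \geq 0$ is genuinely used (through Douglas's factorization lemma, in the spirit of Lemma \ref{lemma2.5}), so care is needed to verify that the coefficient cutdown is compatible with the quotient-subspace structure.
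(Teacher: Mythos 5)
The paper itself contains no proof of Theorem \ref{canonical dilation}: it is quoted from Curto--Vasilescu \cite{C3,CV95} as the classical template that Section \ref{3} then generalizes, the actual content of the section being the construction of $\Pi_V$ (Theorem \ref{6} and Corollary \ref{dilationN}). Your plan --- specialize Corollary \ref{dilationN} to $\mathcal{B}=\mathbb{C}$ and $E_i=\mathbb{C}$, so that by (\ref{rep}) a completely contractive covariant representation is precisely a commuting tuple of contractions, $\mathcal{F}(\mathbb{E})\otimes\mathcal{D}_{*,V}\cong H^2_{\mathcal{D}}(\mathbb{D}^n)$, the induced representation becomes $(M_{z_1},\dots,M_{z_n})$, and Brehmer's condition in particular yields the Brehmer--Solel positivity --- is sound and not circular, since Corollary \ref{dilationN} is proved by the explicit formula $\Pi_V^{\mathbf n}=(I_{\mathbb{E}(\mathbf n)}\otimes\Delta_*(V))\widetilde V_{\mathbf n}^{*}$ without invoking the classical theorem; in effect your route is the one the section intends.

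Two points need repair, one of them substantive. The substantive one is purity. Corollary \ref{dilationN} (like \cite[Theorem 6]{AC14}) applies only to pure representations, i.e.\ $T_j^{*p}\to 0$ strongly for each $j$, and with $\mathcal{D}$ the defect space the statement is simply false without it: already for $n=1$ and $T=I$, or $T=S^*$ the backward shift, one has $\mathcal{D}=\{0\}$, so no dilation to $M_{z}$ on $H^2_{\mathcal{D}}(\mathbb{D})$ can exist. Your fallback of peeling off a ``non-pure (essentially unitary) summand by a Wold-type decomposition'' does not work: the Wold-type decompositions available in this circle of ideas (S{\l}oci\'nski, Skalski--Zacharias, \cite{HV21}) are for (doubly commuting) isometric representations, not for contractive Brehmer tuples, and a non-pure contraction need not have any unitary summand at all ($S^*$ is completely non-unitary yet not pure). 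So purity must be read as part of the hypothesis, exactly as it appears in Corollary \ref{dilationN}. The second point is a misdescription of the mechanism: in the paper's construction the coefficient space is $\mathcal{D}_{*,V}=\overline{\mathrm{Im}\,\Delta_*(V)}$ from the outset, because the $\mathbf{0}$-component of $\Pi_V$ is forced to equal $\Delta_*(V)$; the Brehmer--Solel positivity is used only to guarantee that this square root exists, and purity is what makes $\Pi_V$ isometric via the telescoping norm computation preceding Theorem \ref{6}. Douglas's lemma and Lemma \ref{lemma2.5} play no role here --- they belong to the Beurling analysis of Section \ref{2} --- and the quotient-subspace property of $\Pi_V\mathcal{H}_V$ is immediate from the intertwining relations (\ref{dilate}), so no separate compatibility check between the ``coefficient cutdown'' and the QS structure is required.
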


We start with the definition of induced representation, which is a generalization of the multiplication operators $M_{z_i} \otimes I_{\mathcal{H}}$ on the vector-valued Hardy space $H^2_{\mathcal{H}}(\mathbb{D}^k).$  
\begin{definition}
	Let $\pi$ be a representation of $\mathcal{B}$ on the Hilbert space $\mathcal{H}.$  Define an isometric representation $(\rho, S^{(1)},S^{(2)}, \dots , S^{(k)})$ of $\mathbb{E}$ on $\mathcal{F}(\mathbb{E})\otimes_{\pi} \mathcal{H}$ (cf. \cite{SZ08}) by  $$\rho(a)=\phi_{\infty}(a) \otimes I_{\mathcal{H}}, \:\:\mbox{and}\:\: S^{(i)}(\xi_i)=V_{{\xi}_i} \otimes I_{\mathcal{H}},\: i\in I_{k}, \xi_i \in E_i, a \in \mathcal{B},$$ where $V_{\xi_i}$  denotes the {\rm creation operator} on $\mathcal{F}(\mathbb{E})$ determined by $\xi_i,$ that is, $V_{\xi_i}(\eta)=\xi_i \otimes \eta,$ where $\eta \in \mathcal{F}(\mathbb{E})$ and  $\phi_{\infty}$  denotes the canonical left action of $\mathcal{B}$ on $\mathcal{F}(\mathbb{E}).$ It is easy to see that  $(\rho, S^{(1)}, \dots, S^{(k)})$ is doubly commuting, and it is called {\rm induced representation} of $\mathbb{E}$ induced by $\pi.$ Any covariant representation of $\mathbb{E}$ which is isomorphic to $(\rho,S^{(1)}, \dots ,S^{(k)})$ is also called an {\rm induced representation}.
\end{definition}
\begin{definition}
	Let $(\sigma, V^{(1)}, V^{(2)},\dots, V^{(k)})$  be the  completely contractive covariant representation of $\mathbb{E}$ on   $\mathcal{H}_{V}$ and  $(\mu, T^{(1)}, T^{(2)},\dots T^{(k)})$  isometric representation of $\mathbb{E}$ on  the Hilbert space $\mathcal{H}_{T}$ . We say that   $(\mu, T^{(1)}, T^{(2)},\dots T^{(k)})$ is an {\rm isometric dilation} (cf. \cite{S009}) of $(\sigma, V^{(1)}, V^{(2)},\dots, V^{(k)})$  if there exists an isometry $\Pi: \mathcal{H}_{V} \rightarrow \mathcal{H}_{T}$ such that $\Pi \sigma(a)=\mu(a)\Pi, $ and $(I_E \otimes \Pi)\wt{V}^{(i)*}=\wt{T}^{(i)*}\Pi, $ for $a \in \mathcal{B}$ and $i \in I_{k}.$ 
\end{definition}

We are looking for a covariant representation $(\sigma, V^{(1)}, V^{(2)},\dots, V^{(k)})$ of $\mathbb{E}$ on  $\mathcal{H}_{V} $ that is unitarily equivalent to the restriction of the induced representation $(\rho, S^{(1)}, \dots, S^{(k)})$ to QS of $\mathcal{F}(\mathbb{E})\otimes\mathcal{H}.$ 

Since   $(\rho,S^{(1)}, \dots ,S^{(k)})$ is completely contractive, the representation $(\sigma, V^{(1)}, V^{(2)},\dots, V^{(k)})$ of $\mathbb{E}$ on $\mathcal{H}_{V}$ is  completely contractive.   
Therefore, consider a completely contractive covariant representation $(\sigma, V^{(1)}, V^{(2)},\dots, V^{(k)})$ of $\mathbb{E}$ on  $\mathcal{H}_{V}.$ We need to prove that there exists a isometry $\Pi_V:\mathcal{H}_{V}\to \mathcal{F}(\mathbb{E})\otimes_{\pi}\mathcal{H}$ such that 
\begin{align}\label{dilate}
	&\Pi_V\sigma(a)=\rho(a)\Pi_V \:\:\:\:\mbox{and}\:\:\:\: (I_{E_{i}} \ot \Pi_V)\widetilde{V}^{(i)*}=\widetilde{S}^{(i)*}\Pi_V,  
\end{align} where  $a \in \mathcal{B}$ and $ i \in I_{k}.$ If such an isometry exists, then the needed QS of  $\mathcal{F}(\mathbb{E})\ot \mathcal{H}$ will equal its range, $\Pi_{V}(\mathcal{ H}_{V}).$
Suppose such an isometry $\Pi_{V}$ exists. Note that $\Pi_{V}=\bigoplus_{\mathbf{n}\in\mathbb{N}_{0}^{k}}\Pi_V^{\mathbf{n}},$ where $\Pi_V^\mathbf{n}:\mathcal{H}_{V}\to \mathbb{E}({\mathbf{n}})\ot \mathcal{H}$ defined by $\Pi_V^\mathbf{n}h=P_{{\mathbb{E}(\mathbf{n})}\ot \mathcal{H}}\Pi_Vh, h \in \mathcal{H}_{V}.$ Now using Equation(\ref{dilate}), we obtain \begin{equation}\label{regular}
	\Pi_V^{\mathbf{n}+\mathbf{e}_{i}}=(I_{E_{i} } \ot \Pi_V^{\mathbf{n}})\widetilde{V}^{(i)* }
\end{equation} for every $\mathbf{n} \in \mathbb{N}_{0}^{k}$ and $ i \in I_{k}.$  Indeed \begin{align*}
	(I_{E_{i} } \ot \Pi_V^{\mathbf{n}})\widetilde{V}^{(i)* }&=(I_{E _{i}} \ot P_{{\mathbb{E}(\mathbf{n})}\ot \mathcal{H}})(I_{E _{i}} \ot \Pi_V)\widetilde{V}^{(i)*} =(I_{E _{i}} \ot P_{{\mathbb{E}(\mathbf{n})}\ot \mathcal{H}})\widetilde{S}^{(i)*}\Pi_V\\&=(I_{E_{i}}\ot  P_{{\mathbb{E}(\mathbf{n})}\ot \mathcal{H}})\Pi_V= P_{{\mathbb{E}(\mathbf{n}+\mathbf{e}_{i})}\ot \mathcal{H}}\Pi_V=\Pi_V^{\mathbf{n}+\mathbf{e}_{i}},
\end{align*} where $\mathbf{e}_{i}:=(0,0,\dots,1,0,\dots 0),$ i.e., $1$ at $i^{th}$ place and $0$ otherwise. More generally, for $ {m_{i}}\in \mathbb{N}_{0} $ we have \begin{align*} 
	\Pi_V^{\mathbf{n}+{m_{i}}\mathbf{e}_{i}}=(I_{E_{i}^{\ot m_i} } \ot \Pi_V^{\mathbf{n}})\widetilde{V}^{(i)* }_{m_i},
\end{align*}
and hence
\begin{align*}
	\Pi_V^{\mathbf{n}}=(I_{\mathbb{E}(\mathbf{n})}\ot \Pi_V^{\mathbf{0}})\widetilde{V}_{\mathbf{n}}^{*}.
\end{align*}
Indeed, for $\mathbf{n}=({n_1,n_2,\dots,n_{k}}) \in \mathbb{N}_{0}^{k}$ we have
\begin{align*}
	\Pi_V^{\mathbf{n}}&=(I_{E_{1}^{\ot {{n_{1}}}} } \ot \Pi_V^{\mathbf{n}-{n_{1}}\mathbf{e}_1})\widetilde{V}^{(1)* }_{{n_{1}}}\\&=(I_{E_{1}^{\ot{{n_1}}} } \ot (I_{E_{2}^{\ot {{n_{2}}}}}\ot\Pi_V^{\mathbf{n}-({n_1}\mathbf{e}_1+{n_{2}}\mathbf{e}_2)} )\widetilde{V}^{(2)* }_{{n_{2}}})\widetilde{V}^{(1)* }_{{n_{1}}}\\&=(I_{E_{1}^{\ot {{n_{1}}}} } \ot (I_{E_{2}^{\ot {{n_{2}}}}}\ot(I_{E_{3}^{\ot {{n_{3}}}}}\ot \Pi_V^{\mathbf{n}-({n_{1}}\mathbf{e}_1+{n_{2}}\mathbf{e}_2+{n_{3}}\mathbf{e}_3)}) \widetilde{V}^{(3)* }_{{n_{3}}} )\widetilde{V}^{(2)* }_{{n_{2}}})\widetilde{V}^{(1)* }_{{n_{1}}}=\cdots\\&=(I_{\mathbb{E}(\mathbf{n})}\ot \Pi_V^{\mathbf{0}})\widetilde{V}_{\mathbf{n}}^{*}.
\end{align*}Thus we get $ \Pi_V^{\mathbf{n}}=(I_{\mathbb{E}(\mathbf{n})}\ot \Pi_V^{\mathbf{0}})\widetilde{V}_{\mathbf{n}}^{*}.$ Therefore,  it is  sufficient to find  $ \Pi_V^{\mathbf{0}}$ only, since the remaining $\Pi_V^{\mathbf{n}}$ will be determined by $ (I_{\mathbb{E}(\mathbf{n})}\ot \Pi_V^{\mathbf{0}})\widetilde{V}_{\mathbf{n}}^{*}.$ Since $\Pi_V$ is an isometry, we have \begin{align}
	\|h\|^{2}=\| \Pi_Vh\|^{2}&=\sum_{\mathbf{n} \in \mathbb{N}_{0}^{k}}\| \Pi_V^{\mathbf{n}}h\|^{2}=\sum_{\mathbf{n} \in \mathbb{N}_{0}^{k}}\|(I_{\mathbb{E}(\mathbf{n})}\ot \Pi_V^{\mathbf{0}})\widetilde{V}_{\mathbf{n}}^{*}h\|^{2},\:\:\:\:\:h \in \mathcal{H}_{V}.
\end{align}
Suppose $u=\{u_1,u_2,\dots, u_r\}\subseteq I_{k}$ such that $u_1< u_2< \dots < u_r.$ Define $\mathbf{e}(u):=\mathbf{e}_{u_1}+\mathbf{e}_{u_2}+\dots+\mathbf{e}_{u_r}.$ 
Note that  \begin{align*}
	(I_{\mathbb{E}(\mathbf{e}(u))} \ot \Pi_{V})\widetilde{V}^{*}_{\mathbf{e}(u)}= (I_{\mathbb{E}(\mathbf{e}(u))}  \ot\bigoplus_{\mathbf{n} \in \mathbb{N}_{0}^{k}}(I_{\mathbb{E}(\mathbf{n})}\ot \Pi_V^{\mathbf{0}})\widetilde{V}_{\mathbf{n}}^{*})\widetilde{V}^{*}_{\mathbf{e}(u)}.
\end{align*}
Since $(I_{\mathbb{E}(\mathbf{e}(u))} \ot \Pi_{V})$ is an isometry, we get \begin{align}&\label{F}
	\|\widetilde{V}_{\mathbf{e}(u)}^{*}h\|^{2}=\|(I_{\mathbb{E}(\mathbf{e}(u))} \ot \Pi_{V})\widetilde{V}_{\mathbf{e}(u)}^{*}h\|^{2}\nonumber\\&=\sum_{\mathbf{n} \in \mathbb{N}_{0}^{k}}\|(I_{\mathbb{E}(\mathbf{e}(u))} \ot(I_{\mathbb{E}(\mathbf{n})}\ot \Pi_V^{\mathbf{0}})\widetilde{V}_{\mathbf{n}}^{*})\widetilde{V}^{*}_{\mathbf{e}(u)}h\|^{2},\:\:\:h \in \mathcal{H}_{V}.
\end{align} Denote by $|u|$ the cardinality of $u.$ It follows from Equation ({\ref{F}}) that \begin{align*}&
	\sum_{u\subseteq I_{k}}(-1)^{|u|}\|\widetilde{V}_{\mathbf{e}(u)}^{*}h\|^{2}=\sum_{u\subseteq I_{k}}(-1)^{|u|}\left(\sum_{\mathbf{n} \in \mathbb{N}_{0}^{k}}\| (I_{\mathbb{E}(\mathbf{e}(u))}   \ot(I_{\mathbb{E}(\mathbf{n})}\ot \Pi_V^{\mathbf{0}})\widetilde{V}_{\mathbf{n}}^{*})\widetilde{V}^{*}_{\mathbf{e}(u)}h\|^{2}\right)\\&=\sum_{\mathbf{m} \in \mathbb{N}^k_0}\left({\sum_{{\mathbf{n}\leq\mathbf{m}},{\max\{m_j-n_j\}\leq 1}}}(-1)^{|\mathbf{m}|-|\mathbf{n}|} \| (I_{\mathbb{E}(\mathbf{m})}\ot \Pi_V^{\mathbf{0}})\widetilde{V}_{\mathbf{m}}^{*}h\|^{2}\right)\\&=\|\Pi_V^{\mathbf{0}}h\|^{2}.
\end{align*}
Hence \begin{align*}
	\sum_{u\subseteq I_{k}}(-1)^{|u|}\|\widetilde{V}_{\mathbf{e}(u)}^{*}h\|^{2}\geq 0,\:\:\:\:h\in \mathcal{H}_{V},
\end{align*}  equivalently \begin{align*}
	\sum_{u\subseteq I_{k}}(-1)^{|u|}\widetilde{V}_{\mathbf{e}(u)}\widetilde{V}_{\mathbf{e}(u)}^{*}\geq 0.
\end{align*}
Now, define  $\Delta_{*}(V)=(\sum_{u\subseteq I_{k}}(-1)^{|u|}\widetilde{V}_{\mathbf{e}(u)}\widetilde{V}_{\mathbf{e}(u)}^{*})^{\frac{1}{2}}$ and $\mathcal{D}_{*,V}=\overline{Im\Delta_*(V)}.$ Therefore we can choose   for $\mathcal{H}$ is $\mathcal{D}_{*,V}$ and  $\Pi_V^{\mathbf{0}}=\Delta_{*}(V),$ thus  $ \Pi_V^{\mathbf{n}}=(I_{\mathbb{E}(\mathbf{n})}\ot \Delta_{*}(V))\widetilde{V}_{\mathbf{n}}^{*}.$ Then for each $ h \in \mathcal{H}_{V},$ we obtain \begin{align*}
	\| \Pi_Vh\|^{2}&=\sum_{\mathbf{n} \in \mathbb{N}_{0}^{k}}\| \Pi_V^{\mathbf{n}}h\|^{2}=\lim_{p\rightarrow \infty}\sum_{{\mathbf{n} \in \mathbb{N}_{0}^{k}},{\max n_j\leq p-1}}\|(I_{\mathbb{E}(\mathbf{n})}\ot \Pi_V^{\mathbf{0}})\widetilde{V}_{\mathbf{n}}^{*}h\|^{2}\\&=\lim_{p \rightarrow \infty}\sum_{{\mathbf{n} \in \mathbb{N}_{0}^{k}},{\max n_j\leq p-1}}\sum_{{u}\subseteq{I_{k}}}(-1)^{|u|}\|(I_{\mathbb{E}(\mathbf{n})}\ot\widetilde{V}^{*}_{\mathbf{e}(u)} )\widetilde{V}_{\mathbf{n}}^{*}h\|^{2}\\&=\lim_{p \rightarrow \infty}\sum_{{\mathbf{m} \in \mathbb{N}_{0}^{k}},{\max m_j\leq p}}\left( \sum_{\mathbf{n}\leq \mathbf{m},\max \{{m_j-n_j}\}\leq 1,{\max n_j\leq p-1}}(-1)^{|\mathbf{m}|-|\mathbf{n}|}\|\widetilde{V}_{\mathbf{m}}^{*}h\|^{2}\right).
\end{align*} Observe that \begin{align*}
	\sum_{\mathbf{n}\leq \mathbf{m},\max \{{m_j-n_j}\}\leq 1,{\max n_j\leq p-1}}(-1)^{|\mathbf{m}|-|\mathbf{n}|}=0,
\end{align*} whenever $0< m_i< p,$ for all $i \in I_{k}.$ If $m_i=0$ or $m_i=p,$ then \begin{align*}
	\sum_{\mathbf{n}\leq \mathbf{m},\max \{{m_j-n_j}\}\leq 1,{\max n_j\leq p-1}}(-1)^{|\mathbf{m}|-|\mathbf{n}|}=(-1)^{|{\supp\mathbf{m}}|}.
\end{align*} Therefore
\begin{align*}
	\| \Pi_Vh\|^{2}&=\lim_{p \rightarrow \infty}\sum_{u\subseteq I_{k}}(-1)^{|u|}\|\widetilde{V}_{\mathbf{e}(u)}^{*p}h\|^{2}\\&=\|h\|^{2}+\lim_{p \rightarrow \infty}\sum_{\emptyset\neq {u\subseteq I_{k}}}(-1)^{|u|}\|\widetilde{V}_{\mathbf{e}(u)}^{*p}h\|^{2},\:\:\:\: h \in \mathcal{ H}_{V}.
\end{align*}
Hence $\Pi_{V}$ will be an isometry if $SOT-\lim_{p \rightarrow \infty}\widetilde{V}^{({j}) *} _p=0$ for $j\in I_{k}.$

Now we define the following Brehmer-Solel condition.
\begin{definition}
	Let $(\sigma, V^{(1)}, V^{(2)},\dots, V^{(k)})$ be  a pure, completely contractive representation of $\mathbb{E}$ on $\mathcal{H}_{V}.$ Then  we say it is satisfy {\rm Brehmer-Solel condition} (cf. \cite{S008}) if it satisfies the following condition\begin{align*}
		\sum_{u \subseteq I_k}(-1)^{|u|}\widetilde{V}_{\mathbf{e}(u)}\widetilde{V}_{\mathbf{e}(u)}^{*}\geq 0.
	\end{align*}
\end{definition} 
The above observations yield the following theorem: a generalization of \cite[Theorem 4]{AC14}.
\begin{theorem}\label{6}
	Let $(\sigma, V^{(1)}, V^{(2)},\dots, V^{(k)})$  be a  completely contractive covariant representation of $\mathbb{E}$ on $\mathcal{H}_{V},$ satisfying Brehmer-Solel condition. Then there exists  $\Pi_V:\mathcal{H}_{V}\to \mathcal{F}(\mathbb{E})\otimes_{\pi}\mathcal{D}_{ *,V},$ satisfy \begin{align*}
		\Pi_V\sigma(a)=\rho(a)\Pi_V \:\:\:\:\mbox{and}\:\:\:\: (I_{E_{i}} \ot \Pi_V)\widetilde{V}^{(i)*}=\widetilde{S}^{(i)*}\Pi_V,  
	\end{align*} where  $a \in \mathcal{B}$ and $ i \in I_{k}$  such that \begin{align*}
		\| \Pi_Vh\|^{2}&=\lim_{p \rightarrow \infty}\sum_{u\subseteq I_{k}}(-1)^{|u|}\|\widetilde{V}_{\mathbf{e}(u)}^{*p}h\|^{2}\\&=\|h\|^{2}+\lim_{p \rightarrow \infty}\sum_{\emptyset\neq {u\subseteq I_{k}}}(-1)^{|u|}\|\widetilde{V}_{\mathbf{e}(u)}^{*p}h\|^{2},\:\:\:\: h \in \mathcal{ H}_{V}.
	\end{align*}
\end{theorem}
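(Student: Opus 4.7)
The plan is to formalize the constructive derivation carried out in the paragraphs preceding the theorem. The Brehmer-Solel hypothesis makes the operator $\sum_{u\subseteq I_k}(-1)^{|u|}\wt{V}_{\mathbf{e}(u)}\wt{V}_{\mathbf{e}(u)}^*$ positive, so its positive square root $\Delta_*(V)$ exists and $\mathcal{D}_{*,V}:=\overline{\operatorname{Im}\Delta_*(V)}$ is well defined. I would define the block components $\Pi_V^{\mathbf{n}}:=(I_{\be(\mathbf{n})}\ot \Delta_*(V))\wt{V}_{\mathbf{n}}^{*}:\mathcal{H}_V\to \be(\mathbf{n})\ot \mathcal{D}_{*,V}$ and then set $\Pi_V h:=\bigoplus_{\mathbf{n}\in\Nk}\Pi_V^{\mathbf{n}} h$ formally, postponing the boundedness question to the norm computation.

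Next I would verify the two intertwining relations. The relation $\Pi_V\sigma(a)=\rho(a)\Pi_V$ reduces block-wise to the covariance identity $\wt{V}_{\mathbf{n}}^*\sigma(a)=(\phi_\infty(a)|_{\be(\mathbf{n})}\ot I)\wt{V}_{\mathbf{n}}^*$ together with the fact that each summand $\wt{V}_{\mathbf{e}(u)}\wt{V}_{\mathbf{e}(u)}^*$ commutes with $\sigma(\mathcal{B})$, so that $\Delta_*(V)$ does as well. The second relation $(I_{E_i}\ot \Pi_V)\wt{V}^{(i)*}=\wt{S}^{(i)*}\Pi_V$ reduces, on comparing components, to the identity $\Pi_V^{\mathbf{n}+\mathbf{e}_i}=(I_{E_i}\ot \Pi_V^{\mathbf{n}})\wt{V}^{(i)*}$ already recorded in equation (\ref{regular}); this is immediate from $\wt{V}_{\mathbf{n}+\mathbf{e}_i}^*=(I_{E_i}\ot \wt{V}_{\mathbf{n}}^*)\wt{V}^{(i)*}$.

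The substantial part of the argument is the norm identity. Expanding $\|\Pi_V^{\mathbf{n}} h\|^2=\langle \wt{V}_{\mathbf{n}}(I\ot \Delta_*(V)^2)\wt{V}_{\mathbf{n}}^* h,h\rangle$ and substituting the defining alternating sum for $\Delta_*(V)^2$, the covariance (\ref{rep}) identifies $\wt{V}_{\mathbf{n}}(I\ot \wt{V}_{\mathbf{e}(u)})$ with $\wt{V}_{\mathbf{n}+\mathbf{e}(u)}$ up to a unitary shuffle assembled from the $t_{i,j}$, giving $\|\Pi_V^{\mathbf{n}} h\|^2=\sum_{u}(-1)^{|u|}\|\wt{V}_{\mathbf{n}+\mathbf{e}(u)}^* h\|^2$. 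Summing over $\mathbf{n}$ under the truncation $\max_j n_j\leq p-1$ and reindexing by $\mathbf{m}=\mathbf{n}+\mathbf{e}(u)$, the coefficient of $\|\wt{V}_{\mathbf{m}}^* h\|^2$ becomes the combinatorial sum $\sum_{\mathbf{n}\leq\mathbf{m},\ \max(m_j-n_j)\leq 1,\ \max_j n_j\leq p-1}(-1)^{|\mathbf{m}|-|\mathbf{n}|}$. A direct check shows this vanishes whenever some $m_i$ lies strictly between $0$ and $p$, and equals $(-1)^{|\supp\mathbf{m}|}$ on the corner indices $\mathbf{m}\in\{0,p\}^k$. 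Letting $p\to\infty$ yields the displayed formula, and simultaneously gives boundedness of $\Pi_V$ by $\|h\|$.

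The main obstacle I anticipate is precisely this combinatorial telescoping: one must verify the cancellation on the interior multi-indices carefully and then justify exchanging the sum over $\mathbf{n}$ with the limit in $p$. The positivity furnished by the Brehmer-Solel condition is what keeps the truncated partial sums monotone and uniformly bounded by $\|h\|^2$, forcing convergence and ensuring that only the boundary-corner contributions survive in the limit.
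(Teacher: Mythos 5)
Your proposal is correct and follows essentially the same route as the paper: the paper's proof is precisely the constructive derivation preceding the statement, namely setting $\Pi_V^{\mathbf{n}}=(I_{\mathbb{E}(\mathbf{n})}\ot \Delta_{*}(V))\widetilde{V}_{\mathbf{n}}^{*}$ and evaluating the truncated sums by the same combinatorial cancellation, with only the corner indices $\mathbf{m}\in\{0,p\}^{k}$ surviving. The only difference is that you explicitly verify the two intertwining relations for the constructed $\Pi_V$ (via the commutation of $\Delta_{*}(V)$ with $\sigma(\mathcal{B})$), a point the paper leaves implicit, which is a welcome addition rather than a deviation.
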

In particular, if we take a pure, completely contractive covariant representation, we obtain the following corollary, a generalization of \cite[Theorem 6]{AC14}.
\begin{corollary}\label{dilationN}
	Let $(\sigma, V^{(1)}, V^{(2)},\dots, V^{(k)})$  be a pure, completely contractive covariant representation of $\mathbb{E}$ on the Hilbert space $\mathcal{H}_{V},$ satisfying Brehmer-Solel condition. Then it is unitarily equivalent to the restriction of the induced representation $(\rho, S^{(1)}, \dots, S^{(k)})$ to QS $ \Pi_V \mathcal{H}_{V}$  of $\mathcal{F}(\mathbb{E})\otimes\mathcal{D}_{ *, V}$ where $\Pi_V:\mathcal{H}_{V}\to \mathcal{F}(\mathbb{E})\otimes_{\pi}\mathcal{D}_{ *, V}$ is an isometry.

\end{corollary}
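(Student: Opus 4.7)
The corollary is essentially a direct consequence of Theorem \ref{6} once the purity hypothesis is used to collapse the limit terms in the norm formula. The plan is to apply Theorem \ref{6} to obtain the operator $\Pi_V:\mathcal{H}_{V}\to \mathcal{F}(\mathbb{E})\otimes_{\pi}\mathcal{D}_{*,V}$ satisfying the intertwining relations
\begin{equation*}
\Pi_V\sigma(a)=\rho(a)\Pi_V, \qquad (I_{E_{i}} \ot \Pi_V)\widetilde{V}^{(i)*}=\widetilde{S}^{(i)*}\Pi_V,
\end{equation*}
together with
\begin{equation*}
\|\Pi_V h\|^{2}=\|h\|^{2}+\lim_{p\to\infty}\sum_{\emptyset\neq u\subseteq I_{k}}(-1)^{|u|}\|\widetilde{V}_{\mathbf{e}(u)}^{*p}h\|^{2}.
\end{equation*}

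The first step is to show that $\Pi_V$ is an isometry under the purity hypothesis. For this I would fix $\emptyset\neq u\subseteq I_{k}$, pick any $j\in u$, and use the factorisation
$\widetilde{V}_{p\mathbf{e}(u)}=\widetilde{V}^{(j)}_{p}\bigl(I_{E_{j}^{\ot p}}\ot \widetilde{V}_{p\mathbf{e}(u\setminus\{j\})}\bigr)$
(up to the natural permutation unitaries $t_{i,j}$) to deduce the operator inequality
\begin{equation*}
\widetilde{V}_{p\mathbf{e}(u)}\widetilde{V}^{*}_{p\mathbf{e}(u)}=\widetilde{V}^{(j)}_{p}\bigl(I\ot \widetilde{V}_{p\mathbf{e}(u\setminus\{j\})}\widetilde{V}^{*}_{p\mathbf{e}(u\setminus\{j\})}\bigr)\widetilde{V}^{(j)*}_{p}\leq \widetilde{V}^{(j)}_{p}\widetilde{V}^{(j)*}_{p},
\end{equation*}
since the bracketed term is a positive contraction. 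Consequently $\|\widetilde{V}_{\mathbf{e}(u)}^{*p}h\|^{2}\leq \|\widetilde{V}^{(j)*}_{p}h\|^{2}$, and purity of $(\sigma,V^{(1)},\dots,V^{(k)})$ (i.e.\ $SOT\text{-}\lim_{p}\widetilde{V}^{(j)}_{p}\widetilde{V}^{(j)*}_{p}=0$ for each $j$) forces the right-hand side to $0$. Every term of the sum vanishes in the limit, so $\|\Pi_V h\|=\|h\|$.

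Next I would identify $\mathcal{K}:=\Pi_V\mathcal{H}_{V}$ as a QS of $\mathcal{F}(\mathbb{E})\otimes_{\pi}\mathcal{D}_{*,V}$. The relation $\Pi_V\sigma(a)=\rho(a)\Pi_V$ shows $\rho(a)\mathcal{K}\subseteq\mathcal{K}$, giving the $\rho$-invariance required in the definition of a QS. The second intertwining relation gives, for every $h\in\mathcal{H}_{V}$,
\begin{equation*}
\widetilde{S}^{(i)*}\Pi_V h=(I_{E_{i}}\ot \Pi_V)\widetilde{V}^{(i)*}h\in E_{i}\ot \mathcal{K},
\end{equation*}
so $\widetilde{S}^{(i)*}\mathcal{K}\subseteq E_{i}\ot \mathcal{K}$, which is precisely the QS condition for $(\rho,S^{(1)},\dots,S^{(k)})$.

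Finally, viewing $\Pi_V$ as a unitary $U:\mathcal{H}_{V}\to\mathcal{K}$ and writing $P_{\mathcal{K}}=\Pi_V\Pi_V^{*}$, taking adjoints in the intertwining relations and pre-composing with $P_{\mathcal{K}}$ yields $U\sigma(a)U^{*}=P_{\mathcal{K}}\rho(a)|_{\mathcal{K}}$ and $UV^{(i)}(\xi_{i})U^{*}=P_{\mathcal{K}}S^{(i)}(\xi_{i})|_{\mathcal{K}}$ for $a\in\mathcal{B}$ and $\xi_{i}\in E_{i}$, which is the unitary equivalence in the sense of \eqref{imp1}. The only substantive point in the whole argument is the operator inequality invoked in the second step to leverage purity; the remaining steps are direct bookkeeping with the identities supplied by Theorem \ref{6}.
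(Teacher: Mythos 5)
Your proposal is correct and follows essentially the same route as the paper: the corollary is obtained by applying Theorem \ref{6} and using purity to make the terms $\|\widetilde{V}_{p\mathbf{e}(u)}^{*}h\|^{2}$ for nonempty $u$ vanish in the limit, so that $\Pi_V$ becomes an isometry, after which the intertwining relations identify $\Pi_V\mathcal{H}_V$ as a QS and give the unitary equivalence with the compression of the induced representation. Your domination inequality $\widetilde{V}_{p\mathbf{e}(u)}\widetilde{V}_{p\mathbf{e}(u)}^{*}\leq \widetilde{V}^{(j)}_{p}\widetilde{V}^{(j)*}_{p}$ merely makes explicit a step the paper asserts without detail, so no substantive difference remains.
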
 
Furthermore it dilates the induced representation $(\rho, S^{(1)}, S^{(2)}, \dots, S^{(k)}),$ therefore there exists an isometry $\Pi_V:\mathcal{H}_{V}\to \mathcal{F}(\mathbb{E})\otimes_{\pi}\mathcal{H}$ satisfying Equation (\ref{dilate}).
Then $\mathcal{ K}=\Pi_{V}\mathcal{ H}_{V}$ is a QS of $\mathcal{F}(\mathbb{E})\ot \mathcal{H},$ therefore  $(\sigma, V^{(1)}, V^{(2)},\dots, V^{(k)})$ is isomorphic to  $(P_{\mathcal{ K}}\rho, P_{\mathcal{ K}}S^{(1)}, P_{\mathcal{ K}}S^{(2)}, \dots, P_{\mathcal{ K}}S^{(k)})|_{\mathcal{K}}.$

Note that if $k=1,$ then $\mathcal{K}$ will be a BQS under $(\rho, S)$ by Theorem \ref{hs}, i.e. there exist a Hilbert space $\mathcal{H}_{T},$ a pure isometric representation $(\mu, T)$ of $E$ on $\mathcal{H}_{T}$ and an isometric multi-analytic operator $M_\Theta: { \mathcal{H}_{T}} \to \mathcal{F}(E)\ot \mathcal{H}$ such that $\mathcal{ K}_{\Theta}= (M_{\Theta}(\mathcal{H}_{T}))^{\perp}$
\begin{equation*}
	(\sigma, V)\cong (P_{(M_{\Theta}(\mathcal{H}_{T}))^{\perp}} \rho, P_{(M_{\Theta}(\mathcal{H}_{T}))^{\perp}}S)|_{(M_{\Theta}(\mathcal{H}_{T}))^{\perp}}.
\end{equation*}  The space $\mathcal{ K}_{\Theta}$ is said to be {\it model space} corresponding to $(\sigma,V).$ Therefore, each pure, completely contractive covariant representation is unitarily equivalent to compression of the induced representation to the model space.

Let $(\sigma, V^{(1)}, \dots, V^{(k)})$  be a  completely contractive covariant representation of  $\mathbb{E}$ on  $\mathcal{H}_{V}.$ 
Now in the $k >1$ case, consider $M_{\Theta}:  \mathcal{H}_{V} \to\mathcal{F}(\mathbb{E})\ot \mathcal{ H}   $ be  an isometric multi-analytic operator. Let  $\mathcal{ K}_{\Theta}= (M_{\Theta}(\mathcal{H}_{V}))^{\perp}$ and  $\mathcal{S}_{\Theta}= M_{\Theta}(\mathcal{H}_{V})$  which are BQS and BS of $\mathcal{F}(\mathbb{E})\ot \mathcal{ H}$, respectively. For each $i \in I_k$, define  $$\pi_{\Theta}(a)=P_{\mathcal{K}_{\Theta}}\rho(a)|_{{\mathcal{K}_{\Theta}}}  \:\mbox{and} \:\:W_{\Theta}^{(i)}(\xi_{i})=P_{\mathcal{K}_{\Theta}}S^{(i)}(\xi_{i})|_{{\mathcal{K}_{\Theta}}},$$
for $\xi_{i} \in E_{i}$ and $a \in \mathcal{B}.$ Therefore  $W_{\Theta }=(\pi_{\Theta}, W_{\Theta}^{(1)}, W_{\Theta}^{(2)}, \dots, W_{\Theta}^{(k)} )$ is a completely contractive covariant representation of $\mathbb{E}$ on $\mathcal{K}_{\Theta}.$ Indeed, \begin{align*}
	(\phi^{(i)}(a)\ot I_{\mathcal{F}(\mathbb{E})\otimes \mathcal{H}})\widetilde{W}_{\Theta}^{ (i)*}&=(\phi^{(i)}(a)\ot I_{\mathcal{F}(\mathbb{E})\otimes \mathcal{H}})(I_{E_{i}}\ot P_{\mathcal{K}_{\Theta}})\widetilde{S}^{ (i)*}P_{\mathcal{K}_{\Theta}}\\&=(I_{E_{i}}\ot P_{\mathcal{K}_{\Theta}})(\phi^{(i)}(a)\ot I_{\mathcal{F}(\mathbb{E})\otimes \mathcal{H}})\widetilde{S}^{ (i)*}P_{\mathcal{K}_{\Theta}}\\&=(I_{E_{i}}\ot P_{\mathcal{K}_{\Theta}})\widetilde{S}^{ (i)*}\rho(a)P_{\mathcal{K}_{\Theta}}\\&=(I_{E_{i}}\ot P_{\mathcal{K}_{\Theta}})\widetilde{S}^{ (i)*}P_{\mathcal{K}_{\Theta}}\rho(a)P_{\mathcal{K}_{\Theta}}=\widetilde{W}_{ \Theta}^{(i)*}\pi_{\Theta}(a),
\end{align*} where the last equality follows by the fact that $\mathcal{K}_{\Theta}$ is a QS of ${\mathcal{F}(\mathbb{E})\otimes \mathcal{H}}.$ Now, using the definition of QS, we have \begin{align*}
	\sum_{{u}\subseteq{I_{k}}}(-1)^{|u|}\widetilde{W}_{\mathbf{e}(u)} \widetilde{W}_{\mathbf{e}(u)}^{*}=\sum_{{u}\subseteq{I_{k}}}(-1)^{|u|}P_{\mathcal{K}_\Theta}\widetilde{S}_{\mathbf{e}(u)}\widetilde{S}_{\mathbf{e}(u) }^{*}P_{\mathcal{K}_\Theta},
\end{align*} Therefore $(\sigma, V^{(1)}, \dots, V^{(k)})$ satisfies Brehmer-Solel condition if and only if $W_{\Theta}$  satisfies the  Brehmer-Solel condition whenever they are equivalent. One can now ask which type of covariant representation are unitarily equivalent to $W_{\Theta }$ on BQS, the following theorem answers this question.
\begin{theorem}
	Let $(\sigma, V^{(1)}, \dots, V^{(k)})$  be a  completely contractive covariant representation of  $\mathbb{E}$ on a Hilbert space $\mathcal{H}_{V}.$ Then the following are equivalent:\begin{enumerate}
		\item $(\sigma, V^{(1)}, \dots, V^{(k)})\cong (\pi_{\Theta}, W_{\Theta}^{(1)}, W_{\Theta}^{(2)}, \dots, W_{\Theta}^{(k)} )$ for some BQS $\mathcal{K}_{\Theta} \subseteq \mathcal{F}(\mathbb{E})\ot \mathcal{ H}$.
		\item  $(\sigma, V^{(1)}, \dots, V^{(k)})$ is pure and satisfies the Brehmer-Solel condition and \begin{equation*}
			(I_{E_{j}}\ot ( I_{E_{i}\ot P_{\mathcal{K}_{\Theta}}}- \wV^{(i) *}\wV^{(i)}))(t_{i,j} \ot I_{\mathcal{K}_{\Theta}})(I_{E_{i}}\ot  (I_{E_{j}\ot P_{\mathcal{K}_{\Theta}}}- \wV^{(j) *}\wV^{(j)}))=0,
		\end{equation*} for distinct $i\neq j.$ 
	\end{enumerate}
\end{theorem}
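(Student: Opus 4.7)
My plan is to treat the two implications separately, using the concrete dilation Corollary \ref{dilationN} as the bridge and the BQS characterization Theorem \ref{Beurlin} (applied with the induced representation $(\rho,S^{(1)},\dots,S^{(k)})$ in the role of the ambient pure DCI-representation) to toggle between ``BQS'' and ``commutator vanishes''.

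For the direction $(1)\Rightarrow(2)$, I would first record that the induced representation $(\rho,S^{(1)},\dots,S^{(k)})$ on $\mathcal{F}(\mathbb{E})\otimes\mathcal{H}$ is itself a pure DCI-representation; this is standard and follows directly from the Fock module structure. Assuming $(\sigma,V^{(1)},\dots,V^{(k)})\cong(\pi_\Theta,W_\Theta^{(1)},\dots,W_\Theta^{(k)})$ for some BQS $\mathcal{K}_\Theta\subseteq\mathcal{F}(\mathbb{E})\otimes\mathcal{H}$, I would verify purity of $(\pi_\Theta,W_\Theta)$ by observing that for $h\in\mathcal{K}_\Theta$ (a QS) one has $\widetilde{W}_\Theta^{(i)\,*}h=\widetilde{S}^{(i)\,*}h$, so iteration reduces purity of $W_\Theta$ to purity of $S$. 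For Brehmer--Solel, I would use that for the induced representation the defect projections $D_i:=I-\widetilde{S}^{(i)}\widetilde{S}^{(i)\,*}$ commute (Fock-space projections onto ``no $E_i$ at the front''), so $\sum_{u}(-1)^{|u|}\widetilde{S}_{\mathbf{e}(u)}\widetilde{S}_{\mathbf{e}(u)}^{*}=\prod_i D_i\geq 0$; compressing to $\mathcal{K}_\Theta$ (and using that $\widetilde{W}_{\Theta,\mathbf{e}(u)}\widetilde{W}_{\Theta,\mathbf{e}(u)}^{*}=P_{\mathcal{K}_\Theta}\widetilde{S}_{\mathbf{e}(u)}\widetilde{S}_{\mathbf{e}(u)}^{*}P_{\mathcal{K}_\Theta}$, which uses the QS property) transfers this to $W_\Theta$. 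The commutator vanishing is immediate from Theorem \ref{Beurlin} applied to the pure DCI-representation $(\rho,S)$ and its BQS $\mathcal{K}_\Theta$, since $\widetilde{W}_\Theta^{(i)}$ is precisely the $\widetilde{T}^{(i)}$ of that theorem. Finally I would transport all three conclusions back to $V$ via the unitary equivalence, noting that under $U:\mathcal{H}_V\to\mathcal{K}_\Theta$ the operator $I_{E_i}\otimes P_{\mathcal{K}_\Theta}$ corresponds to $I_{E_i}\otimes I_{\mathcal{H}_V}$ and $\widetilde{W}_\Theta^{(i)\,*}\widetilde{W}_\Theta^{(i)}$ corresponds to $\widetilde{V}^{(i)\,*}\widetilde{V}^{(i)}$, which is exactly what the stated form of (2) requires.

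For the direction $(2)\Rightarrow(1)$, since $(\sigma,V^{(1)},\dots,V^{(k)})$ is pure and Brehmer--Solel, Corollary \ref{dilationN} supplies an isometry $\Pi_V:\mathcal{H}_V\to\mathcal{F}(\mathbb{E})\otimes_{\pi}\mathcal{D}_{*,V}$ realizing $V$ as the compression of the induced representation $(\rho,S^{(1)},\dots,S^{(k)})$ to the QS $\mathcal{K}:=\Pi_V\mathcal{H}_V$. It remains to upgrade ``QS'' to ``BQS''. To do this I would invoke Theorem \ref{Beurlin} with $(\rho,S)$ as the ambient pure DCI-representation and $\mathcal{K}$ as the QS: $\mathcal{K}$ is a BQS if and only if the commutator condition holds for $\widetilde{T}^{(i)}:=P_{\mathcal{K}}\widetilde{S}^{(i)}(I_{E_i}\otimes P_{\mathcal{K}})$. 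The point is that, under $\Pi_V$, the compression $(\mu,T^{(1)},\dots,T^{(k)})$ is unitarily equivalent to $(\sigma,V^{(1)},\dots,V^{(k)})$, so the hypothesis (2) applied to $V$ is literally the commutator condition from Theorem \ref{Beurlin} for $T$. Hence $\mathcal{K}$ is a BQS, and taking $\mathcal{K}_\Theta:=\mathcal{K}$ completes the proof.

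\textbf{Where the work lies.} The only nontrivial pieces are bookkeeping. The first is the identification of defect operators under the unitary equivalence $\Pi_V$: one has to check that $I_{E_i}\otimes P_{\mathcal{K}}-\widetilde{T}^{(i)\,*}\widetilde{T}^{(i)}$, viewed on $E_i\otimes\mathcal{K}$, carries over via $I_{E_i}\otimes\Pi_V^{*}$ to $I_{E_i\otimes\mathcal{H}_V}-\widetilde{V}^{(i)\,*}\widetilde{V}^{(i)}$; once that is in place, the commutator expressions match on the nose. The second is the clean transfer of Brehmer--Solel positivity from $S$ to its compression $W_\Theta$, which I would handle by the computation $\widetilde{W}_{\Theta,\mathbf{e}(u)}\widetilde{W}_{\Theta,\mathbf{e}(u)}^{*}=P_{\mathcal{K}_\Theta}\widetilde{S}_{\mathbf{e}(u)}\widetilde{S}_{\mathbf{e}(u)}^{*}P_{\mathcal{K}_\Theta}$ valid for any QS, so that the inclusion-exclusion sum compresses to $P_{\mathcal{K}_\Theta}\prod_i D_i P_{\mathcal{K}_\Theta}\geq 0$. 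Neither step presents a genuine obstacle; the heart of the theorem is Corollary \ref{dilationN} together with Theorem \ref{Beurlin}, both of which are already available.
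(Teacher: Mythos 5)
Your proposal is correct and follows essentially the same route as the paper: the forward direction transports the conclusion of Theorem \ref{Beurlin} (applied to the induced representation and the BQS $\mathcal{K}_\Theta$) through the unitary equivalence, and the converse combines Corollary \ref{dilationN} with Theorem \ref{Beurlin} to upgrade the quotient subspace $\Pi_V\mathcal{H}_V$ to a Beurling quotient subspace. Your explicit verifications of purity and of the Brehmer--Solel positivity for the compression (via $\widetilde{W}_{\Theta,\mathbf{e}(u)}\widetilde{W}_{\Theta,\mathbf{e}(u)}^{*}=P_{\mathcal{K}_\Theta}\widetilde{S}_{\mathbf{e}(u)}\widetilde{S}_{\mathbf{e}(u)}^{*}P_{\mathcal{K}_\Theta}$) are details the paper only sketches, but they agree with its argument.
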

\begin{proof} 
	$1 \implies 2 $: Since $(\sigma, V^{(1)}, \dots, V^{(k)})\cong (\pi_{\Theta}, W_{\Theta}^{(1)}, W_{\Theta}^{(2)}, \dots, W_{\Theta}^{(k)} ),$ 
	we have a unitary map $U:\mathcal {H}_{V} \to
	\mathcal {K}_{\Theta}$ which gives the unitary equivalence of representations $\sigma$ and $\pi_{\Theta},$ and also for each $1\leq i \leq k,$ we have  \begin{equation}\label{ra}
		P_{\mathcal{K}_{\Theta}}\widetilde{S}^{ (i)}(I_{E_{i}}\ot P_{\mathcal{K}_{\Theta}})=\widetilde{W}_{\Theta}^{ (i)} = U  \widetilde{V}^{(i)}  (I_{E_{i}} \ot U^*),
	\end{equation} which is equivalent to \begin{equation*}
		U^{*}P_{\mathcal{K}_{\Theta}}\widetilde{S}^{ (i)}(I_{E_{i}}\ot P_{\mathcal{K}_{\Theta}}U)=   \widetilde{V}^{(i)}.
	\end{equation*}
	
	As by hypothesis $\mathcal{K}_{\Theta}$ is BQS, Theorem \ref{Beurlin} and Equation ({\ref{ra}}) follows that  \begin{align*}&
		0=	(I_{E_{j}}\ot ( (I_{E_{i}}\ot P_{\mathcal{K}_{\Theta}})- (({P_{\mathcal{K}_{\Theta}}\widetilde{S}^{ (i)}(I_{E_{i}}\ot P_{\mathcal{K}_{\Theta}})})^{*}P_{\mathcal{K}_{\Theta}}\widetilde{S}^{ (i)}(I_{E_{i}}\ot P_{\mathcal{K}_{\Theta}})))\\&(t_{i,j} \ot I_{\mathcal{K}_{\Theta}})(I_{E_{i}}\ot ( (I_{E_{j}}\ot P_{\mathcal{K}_{\Theta}})- ({P_{\mathcal{K}_{\Theta}}\widetilde{S}^{ (j)}(I_{E_{j}}\ot P_{\mathcal{K}_{\Theta}})})^{*}P_{\mathcal{K}_{\Theta}}\widetilde{S}^{ (j)}(I_{E_{j}}\ot P_{\mathcal{K}_{\Theta}})))\\&=(I_{E_{j}}\ot ( (I_{E_{i}}\ot P_{\mathcal{K}_{\Theta}})- (( U  \widetilde{V}^{(i)}  (I_{E_{i}} \ot U^*))^{*}U  \widetilde{V}^{(i)}  (I_{E_{i}} \ot U^*)))\\&(t_{i,j} \ot I_{\mathcal{K}_{\Theta}})(I_{E_{i}}\ot ( (I_{E_{j}}\ot P_{\mathcal{K}_{\Theta}})- (U  \widetilde{V}^{(j)}  (I_{E_{j}} \ot U^*))^{*} U  \widetilde{V}^{(j)}  (I_{E_{j}} \ot U^*)))\\&=	(I_{E_{j}}\ot ( I_{E_{i}\ot P_{\mathcal{K}_{\Theta}}}- \wV^{(i) *}\wV^{(i)}))(t_{i,j} \ot I_{\mathcal{K}_{\Theta}})(I_{E_{i}}\ot  (I_{E_{j}\ot P_{\mathcal{K}_{\Theta}}}- \wV^{(j) *}\wV^{(j)})).
	\end{align*} Also one can easily see that  \begin{align*}
		\sum_{u \subseteq I_k}(-1)^{|u|}\widetilde{V}_{\mathbf{e}(u)}\widetilde{V}_{\mathbf{e}(u)}^{*}\geq 0,
	\end{align*}using Equation (\ref{ra}), which completes the required condition for $ 2.$
	
	$2 \implies 1 $: We have given $(\sigma, V^{(1)}, \dots, V^{(k)})$ is pure and satisfies the Brehmer-Solel condition, therefore by using  Corollary \ref{dilationN},  we obtain \begin{equation*}
		(\sigma, V^{(1)}, \dots, V^{(k)})\cong (\pi_{\Theta}, W_{\Theta}^{(1)}, W_{\Theta}^{(2)}, \dots, W_{\Theta}^{(k)} )
	\end{equation*} for some QS $\mathcal{K}_{\Theta}.$ But \begin{equation*}
		(I_{E_{j}}\ot ( I_{E_{i}\ot P_{\mathcal{K}_{\Theta}}}- \wV^{(i) *}\wV^{(i)}))(t_{i,j} \ot I_{\mathcal{K}_{\Theta}})(I_{E_{i}}\ot  (I_{E_{j}\ot P_{\mathcal{K}_{\Theta}}}- \wV^{(j) *}\wV^{(j)}))=0,
	\end{equation*} for distinct $i\neq j,$  follows that $\mathcal{K}_{\Theta}$ is BQS.
	
\end{proof}
\section{Factorizations and invariant subspaces}\label{4}

This section establishes a relationship between invariant subspaces of compression of a pure isometric covariant representation with factorization of the corresponding inner function.

Let $(\sigma, V)$ and $(\mu, T)$ be pure isometric  representations of $E$ on Hilbert spaces $\mathcal{H}_{V}$ and $\mathcal{H}_{T},$ respectively. Let ${M}_\Theta:\mathcal{H}_{V}\to \mathcal{H}_{T}$ be an isometric multi-analytic operator.
Let us denote  $\mathcal{K}_{\Theta}=\mathcal{H}_{T}\ominus M_{\Theta} \mathcal{H}_{V},$ and $ M_{\Theta} \mathcal{H}_{V}$  be BQS and Beurling subspace of $\mathcal{H}_{T}$ corresponding to $\Theta,$ respectively.
Define  $$\pi_{\Theta}'(a):=P_{\mathcal{K}_{\Theta }}\mu(a)|_{{\mathcal{K}_{\Theta}}}  \:\mbox{and} \:\:W'_{\Theta}(\xi)=P_{\mathcal{K}_{\Theta}}T(\xi)|_{{\mathcal{K}_{\Theta}}},$$
for $\xi \in E$ and $a \in \mathcal{B}.$ Therefore $(\pi'_{\Theta}, W^{\prime}_{\Theta})$ is a completely contractive covariant representation of $E$ on $ \mathcal{K}_{\Theta }\subseteq\mathcal{H}_{T}.$

\begin{theorem}\label{Ber}
	Let $(\sigma, V)$ and $(\mu, T)$ be pure isometric  representations of $E$ on Hilbert spaces $\mathcal{H}_{V}$ and $\mathcal{H}_{T},$ respectively.  Suppose ${\mathcal{W}_{\mathcal{H}_{V}}}$  and ${\mathcal{W}_{\mathcal{H}_{T}}}$ are the generating wandering subspace for $(\sigma, V)$ and $(\mu,T),$ respectively. Let ${M}_\Theta:\mathcal{H}_{V}\to \mathcal{H}_{T}$ be an isometric multi-analytic operator. Then  the covariant representation $(\pi'_{\Theta}, W'_{\Theta})$ has an IS   if and only if there exist a Hilbert space $\mathcal{H}_{R},$ a pure isometric  representation $(\nu, R)$ of $E$ on $\mathcal{H}_{R}$, isometric multi-analytic operators ${M}_\Phi: {\mathcal{H}_{R}} \to {\mathcal{H}_{T}}$, ${M}_\Psi: {\mathcal{H}_{V}} \to {\mathcal{H}_{R}}$ such that  \begin{equation*}
		{\Theta}={\Phi}{\Psi}.
	\end{equation*}
\end{theorem}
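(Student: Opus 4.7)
The plan is to establish a canonical correspondence between invariant subspaces $\mathcal{L} \subseteq \mathcal{K}_\Theta$ of the compression $(\pi'_\Theta, W'_\Theta)$ and intermediate $(\mu,T)$-invariant subspaces $\mathcal{M}$ of $\mathcal{H}_T$ lying between $M_\Theta \mathcal{H}_V$ and $\mathcal{H}_T$, via $\mathcal{M} = M_\Theta \mathcal{H}_V \oplus \mathcal{L}$. Because $E$ is a single $C^*$-correspondence (so we are in the $k=1$ regime), the doubly-commuting hypothesis of Theorem \ref{MT5} is vacuous, and hence every $(\mu,T)$-invariant subspace $\mathcal{M} \subseteq \mathcal{H}_T$ admits a Beurling-type representation $\mathcal{M} = M_\Phi \mathcal{H}_R$ for some pure isometric representation $(\nu,R)$ and isometric multi-analytic $M_\Phi$. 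The two directions of the theorem then translate this geometric correspondence into the factorization of inner operators.

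For the forward direction, assume $\mathcal{L}$ is an IS of $(\pi'_\Theta, W'_\Theta)$ and set $\mathcal{M} := M_\Theta \mathcal{H}_V \oplus \mathcal{L} \subseteq \mathcal{H}_T$. First I would verify that $\mathcal{M}$ is $\mu$- and $T$-invariant: the summand $M_\Theta \mathcal{H}_V$ is $T$-invariant as $M_\Theta$ is multi-analytic, and for $l \in \mathcal{L}$ one decomposes $T(\xi) l = P_{M_\Theta \mathcal{H}_V} T(\xi) l + W'_\Theta(\xi) l$, where the second summand lies in $\mathcal{L}$ by the IS hypothesis and the first trivially lies in $M_\Theta \mathcal{H}_V$. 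Applying Theorem \ref{MT5} with $k=1$ yields a pure isometric representation $(\nu,R)$ on some $\mathcal{H}_R$ and an isometric multi-analytic $M_\Phi: \mathcal{H}_R \to \mathcal{H}_T$ with $M_\Phi \mathcal{H}_R = \mathcal{M}$. I then define
\[
M_\Psi := M_\Phi^* M_\Theta : \mathcal{H}_V \to \mathcal{H}_R.
\]
Since $M_\Theta \mathcal{H}_V \subseteq \mathcal{M}$ and $M_\Phi M_\Phi^* = P_{\mathcal{M}}$, one obtains $M_\Phi M_\Psi = P_{\mathcal{M}} M_\Theta = M_\Theta$ and $M_\Psi^* M_\Psi = M_\Theta^* P_{\mathcal{M}} M_\Theta = M_\Theta^* M_\Theta = I$, so $M_\Psi$ is isometric. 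To check multi-analyticity, I left-multiply the intertwining $M_\Phi \widetilde{R} = \widetilde{T}(I_E \otimes M_\Phi)$ by $M_\Phi^*$ and use $M_\Phi^* M_\Phi = I$ to obtain the identity $\widetilde{R} = M_\Phi^* \widetilde{T}(I_E \otimes M_\Phi)$; this gives
\[
M_\Psi \widetilde{V} = M_\Phi^* M_\Theta \widetilde{V} = M_\Phi^* \widetilde{T}(I_E \otimes M_\Theta) = M_\Phi^* \widetilde{T}(I_E \otimes M_\Phi M_\Psi) = \widetilde{R}(I_E \otimes M_\Psi),
\]
with an analogous computation for the $\mathcal{B}$-action. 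Taking $\Phi := M_\Phi|_{\mathcal{W}_{\mathcal{H}_R}}$ and $\Psi := M_\Psi|_{\mathcal{W}_{\mathcal{H}_V}}$ yields the desired factorization $\Theta = \Phi \Psi$ in the sense $M_\Theta = M_\Phi M_\Psi$.

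The reverse direction is shorter: given $M_\Theta = M_\Phi M_\Psi$ with both factors isometric multi-analytic, set $\mathcal{L} := M_\Phi \mathcal{H}_R \ominus M_\Theta \mathcal{H}_V \subseteq \mathcal{K}_\Theta$, noting the inclusion $M_\Theta \mathcal{H}_V \subseteq M_\Phi \mathcal{H}_R$ is automatic. For $l \in \mathcal{L}$, the $T$-invariance of $M_\Phi \mathcal{H}_R$ gives $T(\xi) l \in M_\Phi \mathcal{H}_R = M_\Theta \mathcal{H}_V \oplus \mathcal{L}$, so $W'_\Theta(\xi) l = P_{\mathcal{K}_\Theta} T(\xi) l$ is exactly the $\mathcal{L}$-component of $T(\xi) l$, showing $\mathcal{L}$ is $W'_\Theta$-invariant; $\pi'_\Theta$-invariance is handled identically. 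The main obstacle is the careful bookkeeping in the forward direction: ensuring that $\mathcal{M}$ is genuinely $T$-invariant (which crucially uses the IS hypothesis on $\mathcal{L}$, not merely $\mathcal{L} \subseteq \mathcal{K}_\Theta$) and that the candidate $M_\Psi = M_\Phi^* M_\Theta$ satisfies both intertwining relations—this last step rests on the identity $\widetilde{R} = M_\Phi^* \widetilde{T}(I_E \otimes M_\Phi)$, which ultimately depends on $M_\Phi$ being an isometric extension of $\Phi$ rather than just a contraction.
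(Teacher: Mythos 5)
Your proposal is correct and follows essentially the same route as the paper: in the forward direction you pass to the $(\mu,T)$-invariant subspace $M_\Theta\mathcal{H}_V\oplus\mathcal{S}$, apply the ($k=1$) Beurling--Lax theorem to write it as $M_\Phi\mathcal{H}_R$, and factor $M_\Theta$ through $M_\Phi$; in the reverse direction you take $\mathcal{S}=M_\Phi\mathcal{H}_R\ominus M_\Theta\mathcal{H}_V$ and check invariance. The only cosmetic difference is that you produce the middle factor explicitly as $M_\Psi=M_\Phi^*M_\Theta$, whereas the paper invokes Douglas's range-inclusion theorem (which, since $M_\Phi$ is an isometry, yields the same operator) and then verifies the same intertwining relations.
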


\begin{proof}

	(	$\implies$)
	Let $\mathcal{S}$ be a nontrivial $(\pi'_{\Theta}, W'_{\Theta})$-IS of $\mathcal{K}_{\Theta} \subseteq \mathcal{H}_{T},$ then $\mathcal{S}\oplus M_\Theta \mathcal{H}_{V}$ is a $(\mu, T)$-IS of $\mathcal{H}_{T}.$ Since $(\mu, T)$ is pure,  using Theorem \ref{hs}, there exist a Hilbert space $\mathcal{H}_{R}$, a pure isometric  representation $(\nu, R) $ of $E$ on $\mathcal{H}_{R}$ and an isometric multi-analytic operator $M_\Phi: {\mathcal{H}_{R}} \to {\mathcal{H}_{T}}$ such that \begin{equation*}\label{quotient}
		\mathcal{S}\oplus M_\Theta \mathcal{H}_{V}=M_{\Phi}{\mathcal{H}_{R}}.
	\end{equation*}  Note that $M_{\Theta}\mathcal{H}_{V}\subseteq M_{\Phi}\mathcal{H}_{R}$ and $\mathcal{S}=M_{\Phi}\mathcal{H}_{R}\ominus M_{\Theta}\mathcal{H}_{V}.$   Then by Douglas's range  inclusion theorem \cite{D96}, there exists a contraction $Z: \mathcal{H}_{V} \to \mathcal{H}_{R}$ such that \begin{equation*}
		M_{\Theta}=M_{\Phi}Z.
	\end{equation*}  Using multi-analytic property of $M_{\Phi}$ and $M_{\Theta}$, we obtain \begin{equation*}
		M_{\Phi}Z\widetilde{V}=M_{\Theta}\widetilde{V}=\wT(I_{E}\ot M_{\Theta})=\wT(I_{E}\ot M_{\Phi} Z)=M_{\Phi}\widetilde{R}(I_{E} \ot Z).
	\end{equation*} Thus $\widetilde{R}(I_{E} \ot Z)=Z\widetilde{V}$ and  similarly $\nu(a)Z=Z\sigma(a),$ for $ a \in \mathcal{B},$ i.e.,   $Z$ is multi-analytic, therefore $Z=M_{\Psi}$, for some inner operator $\Psi: \mathcal{W}_{\mathcal{H}_{V}} \to {\mathcal{H}_{R}}.$ Indeed, ${\Theta}={\Phi}{\Psi}$ and  $M_{\Phi}$ and $M_{\Theta}$ are isometries, we deduce
	\begin{equation*}
		\|h\|=\|M_{\Theta}h\|=\|M_{\Phi}M_{\Psi}h\|=\|M_{\Psi}h\|,\:\:\: h\in \mathcal{ H}_{V}.
	\end{equation*}

	($\impliedby$)	
	On the other hand, suppose there exist a Hilbert space $\mathcal{H}_{R},$ a pure isometric representation $(\nu, R)$ of $E$ on $\mathcal{H}_{R}$ and  isometric multi-analytic operators  $M_\Psi: {\mathcal{H}_{V}} \to {\mathcal{H}_{R}}$ and $M_\Phi: {\mathcal{H}_R} \rightarrow  {\mathcal{H}_T}$ such that   \begin{equation*}
		{\Theta}={\Phi}{\Psi}.
	\end{equation*}  Since  ${\Theta}={\Phi}{\Psi},$ we have $M_{\Theta}(\mathcal{H}_V) \subseteq M_\Phi(\mathcal{H}_R).$  Define $\mathcal{S}:=M_{\Phi}(\mathcal{H}_R)\ominus M_{\Theta}(\mathcal{H}_V).$ Clearly $\mathcal{S}$ is a closed subspace of $\mathcal{K}_{\Theta}.$ Also note that $\mathcal{K}_{\Theta}\ominus \mathcal{S}=\mathcal{K}_{\Phi}.$ 
	Since $\widetilde{W'^{*}_{\Theta}}=(I_{E}\ot P_{\mathcal{K}_{\Theta}})\widetilde{T}^{*}P_{\mathcal{K}_{\Theta}},$ and $\mathcal{K}_{\Phi}\subseteq\mathcal{K}_{\Theta}$, we have $\mathcal{K}_{\Phi}$ is invariant under $ \widetilde{W'^{*}_{\Theta}} $.. Indeed let $ h \in \mathcal{K}_{\Phi}$ \begin{align*}
		\widetilde{W'^{*}_{\Theta}}h=(I_{E}\ot P_{\mathcal{K}_{\Theta}})\widetilde{T}^{*}P_{\mathcal{K}_{\Theta}}h=\widetilde{T}^{*}P_{\mathcal{K}_{\Theta}}h=\widetilde{T}^{*}h=\widetilde{T}^{*}P_{\mathcal{K}_{\Phi}}h= \widetilde{W'^{*}_{\Phi}}h\in \mathcal{K}_{\Phi},
	\end{align*} where  $\widetilde{W'^{*}_{\Phi}}=(I_{E}\ot P_{\mathcal{K}_{\Phi}})\widetilde{T}^{*}P_{\mathcal{K}_{\Phi}},$ Also in similar way  $\mathcal{S}^{\perp}$ is is invariant under $ \widetilde{W'^{*}_{\Theta}} .$ That is  $\mathcal{S}
	$ is an $(\pi'_{\Theta},W'_{\Theta})$-IS.
\end{proof} But this result is not true in general for multi-variable case (see \cite[Example 4.2]{BDDS}). With the help  of \cite[Theorem 1.1]{BDDS} classify factorizations of inner functions in terms of invariant subspaces of tuples of module operators \cite[Theorem 4.4]{BDDS}. We generalize this result for a pure doubly commuting isometric representation of product system on the Hilbert space. Let us denote  $\mathcal{K}_{\Theta}=\mathcal{H}_{T}\ominus M_{\Theta} \mathcal{H}_{V},$ and $ M_{\Theta} \mathcal{H}_{V}$  be BQS and Beurling subspace of $\mathcal{H}_{T}$ corresponding to $\Theta,$ respectively.
For each $i \in I_{k}$, define  $$\pi'_{\Theta}(a):=P_{\mathcal{K}_{\Theta}}\mu(a)|_{{\mathcal{K}_{\Theta}}}  \:\mbox{and} \:\:W_{\Theta}^{'(i)}(\xi_{i}):=P_{\mathcal{K}_{\Theta}}T^{(i)}(\xi_{i})|_{{\mathcal{K}_{\Theta}}},$$
for $\xi_{i} \in E_{i},$ $a \in \mathcal{B}$ and hence $(\pi'_{\Theta}, W_{\Theta}^{'(1)}, W_{\Theta}^{'(2)}, \dots, W_{\Theta}^{'(k)} )$ is a completely contractive covariant representation of $\mathbb{E}$ on $\mathcal{K}_{\Theta} \subseteq \mathcal{H}_{T}.$
We are now ready for the multi-variable analog of Theorem \ref{Ber}. The following theorem is a generalization of \cite[Theorem 4.4]{BDDS}.	  
\begin{theorem}\label{44}
	Let $(\sigma, V^{(1)}, \dots, V^{(k)})$ and $(\mu, T^{(1)}, \dots, T^{(k)})$  be pure DCI-representations of  $\mathbb{E}$ on Hilbert spaces $\mathcal{H}_{V}$ and $\mathcal{H}_{T},$ respectively. Let ${M}_\Theta:\mathcal{H}_{V}\to \mathcal{H}_{T}$ be an isometric multi-analytic operator. Then the following are equivalent:
	\begin{enumerate}
		\item There exist a Hilbert space $\mathcal{H}_{R},$ a pure DCI-representation $(\nu, R^{(1)},\dots,R^{(k)}) $ of $\mathbb{E}$ on $\mathcal{H}_{R}$ and isometric multi-analytic operators ${M}_\Phi: {\mathcal{H}_{R}} \to {\mathcal{H}_{T}}$, ${M}_\Psi: {\mathcal{H}_{V}} \to {\mathcal{H}_{R}}$ such that \begin{equation*}
			{\Theta}={\Phi}{\Psi}.
		\end{equation*}
		\item  There exists  $(\pi'_{\Theta}, W^{'(1)}_{\Theta},\dots, W^{'(k)}_{\Theta})$-IS $\mathcal{S}\subseteq \mathcal{K}_{\Theta}$  such that $\mathcal{S}\oplus\mathcal{S}_{\Theta}$ is a Beurling subspace of $\mathcal{H}_{T}.$
		\item There exists  $(\pi'_{\Theta}, W^{'(1)}_{\Theta},\dots, W^{'(k)}_{\Theta})$-IS $\mathcal{S}\subseteq \mathcal{K}_{\Theta}$ of $\mathcal{H}_{T}$ such that \begin{align*}&
			(I_{E_{j}}\ot ( (I_{E_{i}}\ot P_{\mathcal{K}_{\Theta} \ominus \mathcal{S}})- \widetilde{U}^{(i) *}\widetilde{U}^{(i)}))(t_{i,j} \ot I_{\mathcal{H}_{T}})\\&\:\:\:\:\:(I_{E_{i}}\ot ( (I_{E_{j}}\ot P_{\mathcal{K}_{\Theta} \ominus \mathcal{S}})- \widetilde{U}^{(j) *}\widetilde{U}^{(j)}))=0,
		\end{align*} where  $\widetilde{U}^{(i)}=P_{\mathcal{K}_{\Theta} \ominus \mathcal{S}}\widetilde{W'_{\Theta}}^{(i)}{_{|_{{E_{i}}\ot {\mathcal{K}_{\Theta} \ominus \mathcal{S}}}}},$ for each $i\in I_{k}.$
	\end{enumerate}
\end{theorem}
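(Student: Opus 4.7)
The plan is to prove the three conditions equivalent by establishing the two independent equivalences $(1) \Leftrightarrow (2)$ and $(2) \Leftrightarrow (3)$. The first is a Sz.\,Nagy--Foias style factorization argument that parallels Theorem \ref{Ber}, but with the DCI Beurling subspace characterization (Theorem \ref{MT5}) replacing the single-variable Beurling--Lax--Halmos theorem (Theorem \ref{hs}); the second is a direct consequence of the BQS characterization (Theorem \ref{Beurlin}) applied to the subspace $\mathcal{K}_\Theta \ominus \mathcal{S}$.

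For $(1) \Rightarrow (2)$, I would assume $\Theta = \Phi\Psi$, so that $M_\Theta \mathcal{H}_V = M_\Phi M_\Psi \mathcal{H}_V \subseteq M_\Phi \mathcal{H}_R$, and set $\mathcal{S} := M_\Phi \mathcal{H}_R \ominus M_\Theta \mathcal{H}_V \subseteq \mathcal{K}_\Theta$, giving $\mathcal{S} \oplus \mathcal{S}_\Theta = M_\Phi \mathcal{H}_R$, a Beurling subspace by construction. The $(\pi'_\Theta, W^{'(1)}_\Theta, \dots, W^{'(k)}_\Theta)$-invariance of $\mathcal{S}$ follows from the fact that both $M_\Phi \mathcal{H}_R$ and $M_\Theta \mathcal{H}_V$ are $(\mu, T^{(1)}, \dots, T^{(k)})$-invariant by the multi-analytic property of $M_\Phi$ and $M_\Theta$: for $\xi_i \in E_i$, $T^{(i)}(\xi_i)\mathcal{S} \subseteq \mathcal{S} \oplus \mathcal{S}_\Theta$, so compressing to $\mathcal{K}_\Theta$ yields $W^{'(i)}_\Theta(\xi_i)\mathcal{S} \subseteq \mathcal{S}$, and similarly $\pi'_\Theta(a)\mathcal{S} \subseteq \mathcal{S}$.

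For $(2) \Rightarrow (1)$, Theorem \ref{MT5} applied to the Beurling subspace $\mathcal{S} \oplus \mathcal{S}_\Theta$ of $\mathcal{H}_T$ delivers a Hilbert space $\mathcal{H}_R$, a pure DCI-representation $(\nu, R^{(1)}, \dots, R^{(k)})$ of $\mathbb{E}$ on $\mathcal{H}_R$, and an isometric multi-analytic $M_\Phi: \mathcal{H}_R \to \mathcal{H}_T$ with $M_\Phi \mathcal{H}_R = \mathcal{S} \oplus \mathcal{S}_\Theta$. Since $M_\Theta \mathcal{H}_V \subseteq M_\Phi \mathcal{H}_R$, Douglas' range inclusion theorem supplies a contraction $Z: \mathcal{H}_V \to \mathcal{H}_R$ with $M_\Theta = M_\Phi Z$. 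Exactly as in the proof of Theorem \ref{Ber}, the multi-analytic identities for $M_\Phi$ and $M_\Theta$ together with $M_\Phi^* M_\Phi = I_{\mathcal{H}_R}$ imply $\widetilde{R}^{(i)}(I_{E_i} \otimes Z) = Z\widetilde{V}^{(i)}$ and $\nu(a)Z = Z\sigma(a)$, so $Z = M_\Psi$ for a uniquely determined $\Psi: \mathcal{W}_{\mathcal{H}_V} \to \mathcal{H}_R$; the equality $\|h\| = \|M_\Theta h\| = \|M_\Phi M_\Psi h\| = \|M_\Psi h\|$ then forces $M_\Psi$ to be isometric, i.e., $\Psi$ is inner, and $\Theta = \Phi \Psi$.

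For $(2) \Leftrightarrow (3)$, I would first observe that $\mathcal{K}_\Theta \ominus \mathcal{S} = \mathcal{H}_T \ominus (\mathcal{S} \oplus \mathcal{S}_\Theta)$. The same invariance computation as in $(1) \Rightarrow (2)$ shows that $W'_\Theta$-invariance of $\mathcal{S}$ makes $\mathcal{S} \oplus \mathcal{S}_\Theta$ a $(\mu, T^{(1)}, \dots, T^{(k)})$-invariant subspace of $\mathcal{H}_T$, equivalently $\mathcal{K}_\Theta \ominus \mathcal{S}$ is a QS of $\mathcal{H}_T$. Because $\mathcal{K}_\Theta \ominus \mathcal{S} \subseteq \mathcal{K}_\Theta$, the operator $\widetilde{U}^{(i)}$ of (3) coincides with $P_{\mathcal{K}_\Theta \ominus \mathcal{S}} \widetilde{T}^{(i)}(I_{E_i} \otimes P_{\mathcal{K}_\Theta \ominus \mathcal{S}})$, so Theorem \ref{Beurlin} applied to the QS $\mathcal{K}_\Theta \ominus \mathcal{S}$ of the DCI-representation $(\mu, T^{(1)}, \dots, T^{(k)})$ says the cross-commutator condition in (3) is equivalent to $\mathcal{K}_\Theta \ominus \mathcal{S}$ being a BQS --- which by definition is equivalent to $\mathcal{S} \oplus \mathcal{S}_\Theta$ being a Beurling subspace, i.e., condition (2). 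The main technical obstacle will be in the $(2) \Rightarrow (1)$ step, specifically in upgrading the Douglas contraction $Z$ to an \emph{isometric} multi-analytic operator associated to an inner $\Psi$, and in using Theorem \ref{MT5} (rather than merely Theorem \ref{hs}) to guarantee that the intermediate representation $(\nu, R^{(1)}, \dots, R^{(k)})$ is genuinely DCI; everything else is a careful translation of the single-variable argument and a bookkeeping identification of orthogonal complements.
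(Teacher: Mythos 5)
Your proposal is correct and follows essentially the same route as the paper: Theorem \ref{MT5} plus Douglas' range inclusion to pass between the factorization $\Theta=\Phi\Psi$ and the Beurling subspace $\mathcal{S}\oplus\mathcal{S}_{\Theta}=M_{\Phi}\mathcal{H}_{R}$, and Theorem \ref{Beurlin} applied to the quotient subspace $\mathcal{K}_{\Theta}\ominus\mathcal{S}$ to handle the cross-commutator condition. The only difference is organizational (you prove $(1)\Leftrightarrow(2)$ and $(2)\Leftrightarrow(3)$, while the paper proves $(1)\Rightarrow(2)$, $(2)\Rightarrow(1)$, $(1)\Rightarrow(3)$, $(3)\Rightarrow(2)$), which changes nothing of substance.
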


\begin{proof}

	$(1) \implies (2)$ : Suppose there exist  a Hilbert space $\mathcal{H}_{R},$ a pure DCI-representation $(\nu, R^{(1)},\dots,R^{(k)}) $ of $\mathbb{E}$ on $\mathcal{H}_{R}$ and  isometric multi-analytic operators ${M}_\Phi: {\mathcal{H}_{R}} \to {\mathcal{H}_{T}}$, ${M}_\Psi: {\mathcal{H}_{V}} \to {\mathcal{H}_{R}}$ such that $	{\Theta}={\Phi}{\Psi}.$ Therefore $M_{\Theta}\mathcal{H}_V \subseteq M_\Phi\mathcal{H}_R,$ we can define $\mathcal{S}:=M_{\Phi}\mathcal{H}_R\ominus M_{\Theta}\mathcal{H}_V.$ Clearly $\mathcal{S}$ is a closed subspace of $\mathcal{K}_{\Theta}.$ Also note that $\mathcal{K}_{\Theta}\ominus \mathcal{S}=\mathcal{K}_{\Phi}.$	Since $\widetilde{W'_{\Theta}}^{(i)*}=(I_{E_{i}} \ot P_{\mathcal{ K}_{\Theta}})\widetilde{T}^{(i)*}P_{\mathcal{K}_{\Theta}},$ and $\mathcal{K}_{\Phi}\subseteq\mathcal{K}_{\Theta}$, we have $\mathcal{K}_{\Phi}$  is invariant under $\pi'_{\Theta}$ and $ \widetilde{W'_{\Theta}}^{(i)*} $ for each $i\in I_{k}.$ Indeed, let $ h \in \mathcal{K}_{\Phi}$ \begin{align*}
		\widetilde{W'_{\Theta}}^{(i)*}h&=(I_{E}\ot P_{\mathcal{K}_{\Theta}})\widetilde{T}^{(i)*}P_{\mathcal{K}_{\Theta}}h=\widetilde{T}^{(i)*}P_{\mathcal{K}_{\Theta}}h=\widetilde{T}^{(i)*}h=\widetilde{T}^{(i)*}P_{\mathcal{K}_{\Phi}}h\\&= \widetilde{W'_{\Phi}}^{(i)*}h\in \mathcal{K}_{\Phi}.
	\end{align*} Also in similar way  $\mathcal{S}^{\perp}$ is invariant under$ \widetilde{W'_{\Theta}}^{(i)} $. That is  $\mathcal{S}
	$ is an $(\pi_{\Theta}, W^{'(1)}_{\Theta},\dots, W^{'(k)}_{\Theta})$-IS.
	
	$(2) \implies (1)$: Let $\mathcal{S}$ be a $(\pi'_{\Theta}, W^{'(1)}_{\Theta},\dots, W^{'(k)}_{\Theta})$-IS of $\mathcal{H}_{T}$ such that $\mathcal{S}\oplus M_\Theta \mathcal{H}_{V}$  is a DCS of $\mathcal{H}_{T}$. Therefore by using Theorem \ref{MT5}, there exist a pure DCI-representation $(\nu, R^{(1)},\dots,R^{(k)}) $ of $\mathbb{E}$ on $\mathcal{H}_{R}$ and an isometric multi-analytic operator $M_\Phi: {\mathcal{H}_{R}} \to {\mathcal{H}_{T}}$ such that \begin{equation}\label{quotient 11}
		\mathcal{S}\oplus M_\Theta \mathcal{H}_{V}=M_{\Phi}{\mathcal{H}_{R}}.
	\end{equation} Thus $M_{\Theta}\mathcal{H}_{V}\subseteq M_{\Phi}\mathcal{H}_{R},$ and hence by Douglas's range  inclusion theorem, there exists a contraction $Z: \mathcal{H}_{V} \to \mathcal{H}_{R}$ such that \begin{equation*}
		M_{\Theta}=M_{\Phi}Z.
	\end{equation*} Using multi-analytic property of $M_{\Phi}$ and $M_{\Theta}$, we get \begin{equation*}
		M_{\Phi}\widetilde{R}^{(i)}(I_{E} \ot Z)=\wT^{(i)}(I_{E}\ot M_{\Phi} Z)=\wT^{(i)}(I_{E}\ot M_{\Theta})=M_{\Theta}\widetilde{V}^{(i)}=M_{\Phi}Z\widetilde{V}^{(i)},
	\end{equation*} for each $ i \in I_{k}.$ Thus $\widetilde{R}^{(i)}(I_{E} \ot Z)=Z\widetilde{V}^{(i)}$, similarly $\nu(a)Z=Z\sigma(a),$ for $ a \in \mathcal{B}.$ Then there exists an inner operator $\Psi: \mathcal{W}_{\mathcal{H}_{V}} \to {\mathcal{H}_{R}}$ such that $Z= {M}_{\Psi}.$ Indeed, 	${\Theta}={\Phi}{\Psi}$ and using  $M_{\Phi}$ and $M_{\Theta}$ are isometries, we deduce
	\begin{equation*}
		\|h\|=\|M_{\Theta}h\|=\|M_{\Phi}M_{\Psi}h\|=\|M_{\Psi}h\|,\:\:\: h\in \mathcal{ H}_{V}.
	\end{equation*}

	$(1) \implies (3)$ : Suppose there exist a pure DCI-representation\\ $(\nu, R^{(1)},\dots,R^{(k)}) $ of $\mathbb{E}$ on $\mathcal{H}_{R}$ and isometric multi-analytic operators $M_\Phi: {\mathcal{H}_{R}} \to {\mathcal{H}_{T}}$, $M_\Psi: {\mathcal{H}_{V}} \to {\mathcal{H}_{R}}$ such that $	{\Theta}={\Phi}{\Psi}.$ Define $\mathcal{S} =M_{\Phi}\mathcal{H}_R\ominus M_{\Theta}\mathcal{H}_V$ and then $\mathcal{K}_{\Theta}\ominus \mathcal{S}=\mathcal{K}_{\Phi}.$  Therefore  $\mathcal{K}_{\Phi}$ is a BQS of $\mathcal{H}_{T}$  using Theorem \ref{Beurlin}, we get  \begin{equation*}
		(I_{E_{j}}\ot ( (I_{E_{i}}\ot P_{\mathcal{K}_{\Phi}})- \widetilde{U}^{(i) *}\widetilde{U}^{(i)}))(t_{i,j} \ot I_{\mathcal{H}_{T}})(I_{E_{i}}\ot ( (I_{E_{j}}\ot P_{\mathcal{K}_{\Phi}})- \widetilde{U}^{(j) *}\widetilde{U}^{(j)}))=0,
	\end{equation*} where  $\widetilde{U}^{(i)}=P_{\mathcal{K}_{\Phi}}\widetilde{W'_{\Theta}}^{(i)}{_{|_{{E_{i}}\ot {\mathcal{K}_{\Phi}}}}},$ for each $i\in I_{k}.$

	$(3) \implies (2)$: Since $\widetilde{W'_{\Theta}}^{(i)}=(I_{E}\ot P_{\mathcal{K}_{\Theta}})\widetilde{T}^{(i)*}|_{{\mathcal{K}_{\Theta}}},$ it follows that $\mathcal{K}_{\Theta}\ominus \mathcal{S}$  is a QS of $\mathcal{H}_{T}$. Therefore by hypothesis and using Theorem \ref{Beurlin}, $\mathcal{K}_{\Theta}\ominus \mathcal{S}$  is  a BQS. Now observe that \begin{align*}
		\mathcal{K}_{\Theta}\ominus \mathcal{S}= \mathcal{H}_{T}\cap (M_{\Theta}(\mathcal{H}_{V})\cup \mathcal{S})^{\perp}= \mathcal{H}_{T} \ominus (M_{\Theta}(\mathcal{H}_{V}) \oplus\mathcal{S}),
	\end{align*} which implies that $\mathcal{S}\oplus\mathcal{S}_{\Theta}$ is a Beurling submodule of $\mathcal{H}_{T}.$
\end{proof}
In the above theorem we discussed only non-trivial $(\pi'_{\Theta}, W^{'(1)}_{\Theta},\dots, W^{'(k)}_{\Theta})$-IS $\mathcal{S}\subseteq \mathcal{K}_{\Theta}$ of $\mathcal{K}_{\Theta}$. Let us discuss a trivial case:\begin{enumerate}
	\item First take $\mathcal{S} =\{0\}$. Recall that $\mathcal{S}=M_{\Phi}\mathcal{H}_R\ominus M_{\Theta}\mathcal{H}_V$  therefore $M_{\Phi}\mathcal{H}_R\ominus M_{\Theta}\mathcal{H}_V=\{0\}$, it follows that $M_\Phi\mathcal{H}_{R}\subseteq M_\Theta\mathcal{H}_{V}.$ Since 	${\Theta}={\Phi}{\Psi}$, $M_\Theta\mathcal{H}_{V}\subseteq M_\Phi\mathcal{H}_{R}.$ Hence \begin{align*}
		M_\Phi\mathcal{H}_{R}=M_\Theta\mathcal{H}_{V}= M_{\Phi}M_{\Psi}\mathcal{H}_{V}, 
	\end{align*} we obtain $M_{\Psi}\mathcal{H}_{V}=\mathcal{H}_{R}.$ Thus ${\Psi}$ is a unitary.
	\item Now take $\mathcal{S} =\mathcal{K}_{\Theta}$. Therefore \begin{align*}
		M_{\Phi}\mathcal{H}_R\ominus M_{\Theta}\mathcal{H}_V=\mathcal{K}_{\Theta}=\mathcal{H}_{T} \ominus M_{\Theta}\mathcal{H}_{V},
	\end{align*} it implies that  we obtain $ M_{\Phi}\mathcal{H}_R=\mathcal{H}_{T}.$ Thus ${\Phi}$ is a unitary.
\end{enumerate} Therefore from the above discussion, if $\mathcal{S}$ is  a non-trivial  $(\pi'_{\Theta}, W^{'(1)}_{\Theta},\dots, W^{'(k)}_{\Theta})$-IS of $ \mathcal{K}_{\Theta}$ if and only if isometric multi-analytic operators ${M}_\Phi: {\mathcal{H}_{R}} \to {\mathcal{H}_{T}}$, ${M}_\Psi: {\mathcal{H}_{V}} \to {\mathcal{H}_{R}}$ are not unitary.
The following corollary is a generalization of \cite[Corollary 4.5]{BDDS}.
\begin{corollary}
	Let $(\sigma, V^{(1)}, \dots, V^{(k)})$ and $(\mu, T^{(1)}, \dots, T^{(k)})$  be  pure DCI-representations of  $\mathbb{E}$ on  Hilbert spaces $\mathcal{H}_{V}$ and $\mathcal{H}_{T},$ respectively.  Let ${M}_\Theta: {\mathcal{H}_{V}} \to {\mathcal{H}_{T}}$ be an isometric multi-analytic operator. Then  isometric multi-analytic operators ${M}_\Phi: {\mathcal{H}_{R}} \to {\mathcal{H}_{T}}$, ${M}_\Psi: {\mathcal{H}_{V}} \to {\mathcal{H}_{R}},$ coming from Theorem (\ref{44}), are non-unitary if and only if the following holds: 
	\begin{enumerate}
		\item $\mathcal{S}$ is a non-trivial $(\pi'_{\Theta}, W^{'(1)}_{\Theta},\dots, W^{'(k)}_{\Theta})$-IS of $\mathcal{K}_{\Theta}.$ 
		\item $\mathcal{S}$ is not a Beurling subspace of $\mathcal{H}_{T}.$
		\item  $\mathcal{K}_{\Theta}\ominus \mathcal{S}$ does not reduce $(\mu, T^{(1)}, \dots, T^{(k)})$.
	\end{enumerate}  
\end{corollary}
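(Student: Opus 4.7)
The plan is to prove the corollary by leveraging the identification $\mathcal{S} = M_\Phi \mathcal{H}_R \ominus M_\Theta \mathcal{H}_V$ attached, via Theorem~\ref{44}, to the equality $\Theta = \Phi\Psi$. The short trivial-case analysis immediately preceding the corollary already gives the complete dictionary for condition~(1): $\mathcal{S} = \{0\}$ happens iff $M_\Psi \mathcal{H}_V = \mathcal{H}_R$ (i.e.\ $\Psi$ is unitary), and $\mathcal{S} = \mathcal{K}_\Theta$ happens iff $M_\Phi \mathcal{H}_R = \mathcal{H}_T$ (i.e.\ $\Phi$ is unitary). Hence ``$\mathcal{S}$ non-trivial'' is equivalent to ``both $M_\Phi$ and $M_\Psi$ non-unitary''; this piece is a direct citation of that discussion.

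Next I would handle the two remaining conditions in the forward direction, assuming $M_\Phi, M_\Psi$ non-unitary. For (2), argue by contradiction: if $\mathcal{S}$ were a Beurling subspace of $\mathcal{H}_T$, then $\mathcal{S}$ would be a $(\mu, T^{(1)},\dots,T^{(k)})$-invariant DCS of $\mathcal{H}_T$ by Theorem~\ref{MT5}. Since $\mathcal{S} \perp M_\Theta \mathcal{H}_V$ and $\mathcal{S} \oplus M_\Theta \mathcal{H}_V = M_\Phi \mathcal{H}_R$, pulling back through the isometric multi-analytic intertwiner $M_\Phi$ realizes $M_\Psi \mathcal{H}_V \subseteq \mathcal{H}_R$ as a Beurling subspace whose orthogonal complement in $\mathcal{H}_R$ is also a Beurling subspace. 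The uniqueness of the generating wandering subspace for the pure DCI-representation $(\nu, R^{(1)}, \dots, R^{(k)})$, exactly as exploited in the proof of Theorem~\ref{Beurlin}, then forces $M_\Psi \mathcal{H}_V = \mathcal{H}_R$, i.e.\ $\Psi$ unitary, contradicting the hypothesis. For (3), suppose $\mathcal{K}_\Theta \ominus \mathcal{S} = \mathcal{K}_\Phi$ reduces $(\mu, T^{(1)}, \dots, T^{(k)})$; then its complement $M_\Phi \mathcal{H}_R$ is reducing as well. Combined with $(\mu, T^{(1)}, \dots, T^{(k)})$ being pure and the fact that $M_\Phi \mathcal{H}_R$ is already a Beurling subspace, purity forces $\mathcal{K}_\Phi$ to contain no nonzero wandering vectors, so $\mathcal{K}_\Phi = \{0\}$ and $M_\Phi$ is unitary, again a contradiction.

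The converse is immediate: condition~(1) alone, via the dichotomy recalled above, already implies both $M_\Phi$ and $M_\Psi$ are non-unitary, and the additional conditions (2) and (3) are obviously consistent with this, so the conjunction (1)+(2)+(3) yields the non-unitarity a fortiori.

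The main obstacle I expect is the implication in~(2). The delicate step is to transfer the information ``$\mathcal{S}$ is a Beurling subspace inside $\mathcal{H}_T$'' back along $M_\Phi$ to a Beurling-type statement inside $\mathcal{H}_R$. One has the intertwinings $M_\Phi \widetilde{R}^{(i)} = \widetilde{T}^{(i)}(I_{E_i} \otimes M_\Phi)$ and $M_\Phi^* M_\Phi = I_{\mathcal{H}_R}$, but preserving the doubly commuting structure under $M_\Phi^*$ requires a careful manipulation with the cross-commutators $[\widetilde{R}^{(j)*}, \widetilde{R}^{(i)}]$ developed in Section~\ref{2}; applying Theorem~\ref{Beurlin} and Theorem~\ref{MT5} to both $\mathcal{K}_\Theta$ and $\mathcal{K}_\Phi$ and comparing their wandering subspaces should make this transfer rigorous.
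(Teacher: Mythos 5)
Your handling of condition (1) and of the converse matches the paper: the trivial-case dichotomy preceding the corollary does show that $\mathcal{S}$ is non-trivial exactly when both $M_\Phi$ and $M_\Psi$ fail to be unitary, and using that dichotomy alone for the backward implication is legitimate (indeed more direct than the paper's case analysis). The genuine gaps are the two decisive steps of your forward direction. For (2), you conclude that ``uniqueness of the generating wandering subspace forces $M_\Psi\mathcal{H}_V=\mathcal{H}_R$'' once $M_\Psi\mathcal{H}_V$ and its complement are both Beurling. That implication is false: uniqueness of the GWS only yields an orthogonal splitting $\mathcal{W}_{\mathcal{H}_R}=\mathcal{W}_1\oplus\mathcal{W}_2$ of the wandering subspace, exactly as in the proof of Theorem \ref{Beurlin}, and nothing makes $\mathcal{W}_2$ vanish; for the induced representation on $\mathcal{F}(\mathbb{E})\otimes(\mathcal{H}_1\oplus\mathcal{H}_2)$ the two complementary subspaces $\mathcal{F}(\mathbb{E})\otimes\mathcal{H}_1$ and $\mathcal{F}(\mathbb{E})\otimes\mathcal{H}_2$ are both Beurling and neither is the whole space. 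The paper's route is different: it writes $\mathcal{S}=M_{\Phi_1}\mathcal{H}_{R_1}$ by Theorem \ref{MT5}, uses Douglas's range-inclusion theorem together with the intertwining relations (not the cross-commutator machinery you sketch) to produce an inner $\Phi_2$ with $\mathcal{S}=M_\Phi M_{\Phi_2}\mathcal{H}_{R_1}$, deduces $\mathcal{K}_\Psi=M_{\Phi_2}\mathcal{H}_{R_1}$, and then draws the contradiction from $\mathcal{K}_\Psi$ being simultaneously a Beurling subspace and a quotient subspace, i.e. reducing --- not from a wandering-vector count.

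For (3), the claim that ``purity forces $\mathcal{K}_\Phi$ to contain no nonzero wandering vectors'' is also false: a reducing subspace of a pure DCI-representation restricts to a pure DCI-representation, which by \cite[Corollary 3.5]{HV21} is generated by its own wandering subspace, nonzero whenever $\mathcal{K}_\Phi\neq\{0\}$; so your step assumes what is to be proved. The paper instead passes to the orthogonal complement, observes that $\mathcal{K}_\Theta\ominus\mathcal{S}$ reducing would make $\mathcal{S}\oplus\mathcal{S}_\Theta=M_\Phi\mathcal{H}_R$ reducing, and derives unitarity of $\Phi$ from that. So although your overall architecture (the trivial-case dictionary for (1) plus contradiction arguments for (2) and (3)) parallels the paper, both of your key implications rest on false lemmas and would have to be replaced by the Douglas-factorization and reducing-subspace arguments the paper actually uses.
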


\begin{proof}
	First assume isometric multi-analytic operators ${M}_\Phi: {\mathcal{H}_{R}} \to {\mathcal{H}_{T}}$, ${M}_\Psi: {\mathcal{H}_{V}} \to {\mathcal{H}_{R}}$ are non-unitary.  As by above discussion, if $\mathcal{S}$ is  a non-trivial  $(\pi'_{\Theta}, W^{'(1)}_{\Theta},\dots, W^{'(k)}_{\Theta})$-IS of $ \mathcal{K}_{\Theta}$ if and only if  isometric multi-analytic operators ${M}_\Phi$, ${M}_\Psi $  are not unitary. It proves (1).

	Let, if possible, $\mathcal{S}$ be a Beurling subspace of $\mathcal{H}_{T}$. Therefore by using Theorem \ref{MT5}, there exist a pure DCI-representation $(\mu_1, R_1^{(1)},\dots,R_1^{(k)}) $ of $\mathbb{E}$ on $\mathcal{H}_{R_1}$ and an isometric multi-analytic operator $M_{\Phi_1}: {\mathcal{H}_{R_1}} \to {\mathcal{H}_{T}}$ such that $\mathcal{S}=M_{\Phi_1}{\mathcal{H}_{R_1}}.$ But \begin{align}\label{inv}
		\mathcal{S}=M_{\Phi}\mathcal{H}_R\ominus M_{\Theta}\mathcal{H}_V=M_{\Phi_1}{\mathcal{H}_{R_1}},
	\end{align} we have $M_{\Phi_1}{\mathcal{H}_{R_1}} \subseteq M_{\Phi}\mathcal{H}_R,$ and hence by Douglas's range  inclusion theorem, there exists a contraction $Z: \mathcal{H}_{R_1} \to \mathcal{H}_{R}$ such that \begin{equation*}
		M_{\Phi_1}=M_{\Phi}Z.
	\end{equation*}	Using multi-analytic property of $M_{\Phi}$ and $M_{\Phi_1}$, we get \begin{equation*}
		M_{\Phi}\widetilde{R}^{(i)}(I_{E} \ot Z)=\wT^{(i)}(I_{E}\ot M_{\Phi} Z)=\wT^{(i)}(I_{E}\ot M_{\Phi_1})=M_{\Phi_1}\widetilde{R}_{1}^{(i)}=M_{\Phi}Z\widetilde{R}_{1}^{(i)},
	\end{equation*} for each $i \in I_{k}. $ Thus $\widetilde{R}^{(i)}(I_{E} \ot Z)=Z\widetilde{R}_{1}^{(i)}$, similarly $\nu(a)Z=Z\mu_1(a),$ for $ a \in \mathcal{B}.$ Then there exists inner operator $\Phi_2: \mathcal{W}_{\mathcal{H}_{R_1}} \to \mathcal{W}_{\mathcal{H}_{R}}$ such that $Z= {M}_{\Phi_{2}}.$ Indeed, $\Phi_1=\Phi\Phi_2$ and   $M_{\Phi}$ and $M_{\Phi_1}$ are isometries, we deduce
	\begin{equation*}
		\|h\|=\|M_{\Phi_1}h\|=\|M_{\Phi}M_{\Phi_2}h\|=\|M_{\Phi_2}h\|,\:\:\: h\in \mathcal{ H}_{R_{1}}.
	\end{equation*} By using Equation (\ref{inv}) \begin{align*}  \mathcal{S}=M_{\Phi_1}{\mathcal{H}_{R_1}}
		M_{\Phi}(M_{\Phi_2}{\mathcal{H}_{R_1}})=M_{\Phi}\mathcal{H}_R\ominus M_{\Theta}\mathcal{H}_V&=M_{\Phi}\mathcal{H}_R\ominus M_{\Phi}M_{\Psi}\mathcal{H}_V\\&=M_{\Phi}(\mathcal{H}_R\ominus M_{\Psi}\mathcal{H}_V),
	\end{align*} we obtain $M_{\Phi_2}{\mathcal{H}_{R_1}}=\mathcal{H}_R\ominus M_{\Psi}\mathcal{H}_V=\mathcal{K}_{\Psi}.$ It implies that $\mathcal{K}_{\Psi}$ is  \\$(\nu, R^{(1)}, \dots, R^{(k)})$-reducing subspace of $\mathcal{H}_{R}$ It implies that $\mathcal{K}_{\Psi}$ is trivial subspace which means that $\Psi$ is an unitary. But $\Psi$ is not a unitary operator; thus, it is a contradiction. Hence  $\mathcal{S}$ is not a Beurling subspace of $\mathcal{H}_{T}$. It proves (2).

	Suppose  $\mathcal{K}_{\Theta}\ominus \mathcal{S}$  reduces $(\mu, T^{(1)}, \dots, T^{(k)}).$ Then $(\mathcal{K}_{\Theta}\ominus \mathcal{S})^{\perp}= \mathcal{S}\oplus \mathcal{S}_{\Theta}$ is also reduces $(\mu, T^{(1)}, \dots, T^{(k)}).$ On the other hand, By using Equation (\ref{quotient 11})  $\mathcal{S}\oplus \mathcal{S}_{\Theta}=M_{\Phi}\mathcal{H}_{R},$ it follows that $\Phi$ is an unitary, which is contradiction as $\Phi$ is not unitary inner operator.
	
	For the converse part, suppose $\mathcal{S}$ is a non-trivial $(\pi_{\Theta}, W^{(1)}_{\Theta},\dots, W^{(k)}_{\Theta})$-IS of $\mathcal{K}_{\Theta}.$ Since $\Theta=\Phi\Psi$ and $\Theta$ is non constant, both $\Phi$ and $\Psi$ can not be unitaries. It remains to show that $\Phi$ and $\Psi$ can not be isometry operators. Let, if possible, $\Phi\equiv X_1$ for some non-unitary isometry $X_1$ and $\Psi$ is not unitary. Then \begin{align*}
		\mathcal{S}\oplus \mathcal{S}_{\Theta}=M_{\Phi}\mathcal{H}_{R}=X_1\mathcal{H}_{R}=\mathcal{H}_{X_1R}
	\end{align*} and hence $\mathcal{S}\oplus \mathcal{S}_{\Theta}$ reduces $(\mu, T^{(1)}, \dots, T^{(k)}),$ which contradicts to (3). On the other hand, if $\Psi\equiv X_2$ for some non-unitary isometry $X_2$ and $\Phi$ is not unitary. Then\begin{align*}
		\mathcal{S}= M_{\Phi}(\mathcal{H}_R\ominus M_{\Psi}\mathcal{H}_V)=M_{\Phi}(\mathcal{H}_R\ominus(X_2\mathcal{H}_V))=M_{\Phi}\mathcal{H}_{R-X_{2}V}
	\end{align*} is a Beurling subspace of $\mathcal{H}_{T},$ which contradicts to (2). It completes the proof. 
\end{proof}
\subsection*{Acknowledgment}
We thank Jaydeb Sarkar for making us aware of reference \cite{AC14}. Azad Rohilla is supported by a UGC fellowship (File No: 16-6(DEC.2017)
/2018(NET/CSIR)). Shankar Veerabathiran thanks ISI Bangalore for the visiting scientist position. Harsh Trivedi is supported by MATRICS-SERB  
Research Grant, File No: MTR/2021/000286, by the Science and Engineering Research Board
(SERB), Department of Science \& Technology (DST), Government of India.  We acknowledge the Centre for Mathematical \& Financial Computing and the DST-FIST grant for the financial support for the computing lab facility under the scheme FIST ( File No: SR/FST/MS-I/2018/24) at the LNMIIT, Jaipur.

\end{document}